\documentclass[11pt,oneside,a4paper]{amsart}

\usepackage[
	a4paper,
	margin=1in
]{geometry}
\usepackage{amsmath,amssymb,amsthm}
\usepackage{float}
\usepackage{wrapfig}
\usepackage{needspace}
\usepackage{enumitem}
\usepackage{placeins}
\usepackage{graphicx}
\usepackage{xcolor}
\usepackage{aliascnt}
\usepackage{tikz}
\usetikzlibrary{external,intersections}
\tikzset{external/up to date check=md5}
\definecolor{linkturquoise}{HTML}{006D70}
\definecolor{citepink}{HTML}{9B1B5A}
\usepackage[
colorlinks=true,
linkcolor=linkturquoise,
citecolor=citepink,
urlcolor=linkturquoise
]{hyperref}
\usepackage[nameinlink,noabbrev]{cleveref}

\usepackage{comment}
\newtheorem{theorem}{Theorem}[section]
\newtheorem*{maintheorem}{Theorem A}
\newaliascnt{lemma}{theorem}
\newtheorem{lemma}[lemma]{Lemma}
\aliascntresetthe{lemma}
\newaliascnt{proposition}{theorem}
\newtheorem{proposition}[proposition]{Proposition}
\aliascntresetthe{proposition}
\newaliascnt{corollary}{theorem}
\newtheorem{corollary}[corollary]{Corollary}
\aliascntresetthe{corollary}
\theoremstyle{definition}
\newaliascnt{definition}{theorem}
\newtheorem{definition}[definition]{Definition}
\aliascntresetthe{definition}
\newaliascnt{example}{theorem}
\newtheorem{example}[example]{Example}
\aliascntresetthe{example}
\newtheoremstyle{boldremark}
{\topsep}{\topsep}
{\normalfont}{}
{\bfseries}{.}
{.5em}{}
\theoremstyle{boldremark}
\newaliascnt{remark}{theorem}
\newtheorem{remark}[remark]{Remark}
\aliascntresetthe{remark}
\newtheorem*{roadmap}{Roadmap}

\makeatletter
\let\c@equation\c@theorem

\let\c@figure\c@theorem

\makeatother
\AtBeginDocument{%
	\makeatletter
	\makeatother
}

\crefformat{equation}{(#2#1#3)}
\Crefformat{equation}{(#2#1#3)}
\crefrangeformat{equation}{(#3#1#4)--(#5#2#6)}
\Crefrangeformat{equation}{(#3#1#4)--(#5#2#6)}
\crefmultiformat{equation}{(#2#1#3)}{ and (#2#1#3)}
{, (#2#1#3)}{, and (#2#1#3)}
\Crefmultiformat{equation}{(#2#1#3)}{ and (#2#1#3)}
{, (#2#1#3)}{, and (#2#1#3)}

\crefname{theorem}{theorem}{theorems}
\Crefname{theorem}{Theorem}{Theorems}
\crefname{lemma}{lemma}{lemmas}
\Crefname{lemma}{Lemma}{Lemmas}
\crefname{proposition}{proposition}{propositions}
\Crefname{proposition}{Proposition}{Propositions}
\crefname{corollary}{corollary}{corollaries}
\Crefname{corollary}{Corollary}{Corollaries}
\crefname{definition}{definition}{definitions}
\Crefname{definition}{Definition}{Definitions}
\crefname{example}{example}{examples}
\Crefname{example}{Example}{Examples}
\crefname{remark}{remark}{remarks}
\Crefname{remark}{Remark}{Remarks}

\newcommand{\C}{\mathbb C}
\newcommand{\R}{\mathbb R}
\newcommand{\Z}{\mathbb Z}
\newcommand{\ord}{\operatorname{ord}}
\newcommand{\rebr}{\operatorname{rebr}}
\newcommand{\imbr}{\operatorname{imbr}}
\newcommand{\mt}{\operatorname{mt}}
\newcommand{\dbl}{\operatorname{dbl}}

\newcommand{\bgamma}{\boldsymbol\gamma}
\newcommand{\alphacp}[1]{\alpha_{\bgamma,#1}^{\mathrm{cp}}}
\newcommand{\alpharb}[1]{\alpha_{\bgamma,#1}^{\mathrm{rb}}}

\title{Real morsifications via the trace map}
\author{Pablo Portilla Cuadrado}
 \thanks{The author is supported by
	RYC2022-035158-I, funded by MCIN/AEI/10.13039/501100011033 and by the
	FSE+. He also thanks the support from the grant PID2024-156181NB-C32 funded
	by  MICIU/AEI/10.13039/501100011033 and ERDF/EU}

\begin{document}

	\begin{abstract}
		We prove that every reduced real plane curve singularity admits a real 
		morsification.
		This settles a question of A'Campo and Gusein-Zade, later stated as
		conjectures by Leviant--Shustin and by
		Fomin--Pylyavskyy--Shustin--Thurston. In particular we  overcome the
		main obstruction that remained
		posed by conjugate pairs of nonreal branches.  Our new main ingredient
		is a construction that produces the divide from a nodal smoothing of
		two normalization disks. This is what we call the {\em trace map}.  For
		real
		branches, it
		recovers Gusein-Zade's construction using Chebyshev polynomials.  For
		pairs of complex conjugate branches with distinct tangents, the construction
		gives an explicit formula for the divide in terms of the Puiseux data.
		The general
		method consists in a delicate combination of the trace map with
		A'Campo's translations and contractions
		to produce divides and real morsifications for all reduced real plane
		curve
		singularities.
	\end{abstract}

	\maketitle

\section{Introduction}
\label{sec:introduction}

\subsection*{The real morsification problem}

Let $(C,0)$ be a reduced plane curve germ.  We write $\delta(C,0)$ for its
delta invariant.  Over $\C$, in a maximal nodal deformation of $(C, 0)$,
every sufficiently close fiber other than the central one has $\delta(C,0)$
nodes and no other singularities.  For a deformation over $\R$, a node is
either real
hyperbolic, with local model $xy=0$, real elliptic, with local model
$x^2+y^2=0$, or nonreal and paired with its complex conjugate.  Only the
real nodes occur in the real zero set: a hyperbolic node is a transverse
crossing, while an elliptic node is an isolated point.  A real nodal
deformation is a real morsification if, for every sufficiently small positive
parameter, all critical points of the corresponding holomorphic function are
real, those of its real restriction are Morse, and those on the zero level are
exactly the saddles.

Write $\imbr(C,0)$ for the number of conjugate pairs of nonreal branches of
$(C,0)$.  Leviant and Shustin proved that a real nodal deformation has at
most $\delta(C,0)-\imbr(C,0)$ real hyperbolic nodes.  They also proved that
it is a real morsification precisely when it attains this bound and has no
other singularities
\cite[Lemma 2 and Remark 4]{LeviantShustin2018Morsifications}.  In particular, 
for every positive parameter sufficiently close to the origin, the closure of 
the real locus of the
corresponding nearby fiber is therefore a divide ({\em partage} in the sense of
\cite{ACampo1975MonodromyDeploiement}).

\begin{maintheorem}
	Every reduced real plane curve germ  $(C,0)$  admits a
	real morsification.  Equivalently, there is a real nodal deformation
	$(C_s)$ of a representative of $(C,0)$ in a Milnor ball such that, for
	every $s>0$ small enough, the curve $C_s$ has exactly
	\[
		\delta(C,0)-\imbr(C,0)
		\]
		real hyperbolic nodes and no other singularities.  In particular, the
		closure of its real locus in the closed real Milnor disk is a divide.
\end{maintheorem}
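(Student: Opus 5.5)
By the Leviant--Shustin criterion recalled above, it suffices to build a real nodal deformation $(C_s)$ whose members, for small $s>0$, have exactly $\delta(C,0)-\imbr(C,0)$ real hyperbolic nodes and no other singularities. I would do this by parametrization. Write the real branches $C_1,\dots,C_r$ and the conjugate pairs $(D_j,\overline{D_j})$, $j=1,\dots,\imbr(C,0)$. I will deform the normalization maps $\varphi_i\colon(\C,0)\to(\C^2,0)$ (real maps for the real branches, conjugate maps for each pair) into maps $\varphi_{i,s}$ with three properties: each image is an immersed curve with only transverse double points; every double point is real and hyperbolic, except that each conjugate pair is allowed exactly one real elliptic node; and the total number of nodes is $\delta$. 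Counting then gives $\delta-\imbr$ hyperbolic nodes. The elliptic node must then be removed by a small real perturbation, since a conjugate pair meets in a real point $p=\varphi_s(t)=\overline{\varphi_s}(\bar t)$, and locally $p$ is an isolated real point. The cleanest way I see is to choose the deformation of $D_j\cup\overline{D_j}$ so that $D_{j,s}\cap\overline{D_{j,s}}$ has one fewer real point, the remaining intersection being nonreal conjugate pairs. This is consistent with the Leviant--Shustin bound. I would therefore target: all nodes of real branches are hyperbolic, and each pair $D\cup\overline D$ contributes $\delta(D\cup\overline D)-1$ real hyperbolic nodes.

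For a single real branch I would use Gusein-Zade's Chebyshev construction. Replace the Puiseux-type parametrization $(t^{n},\sum a_k t^k)$ by $(T_n(t),\sum a_k s^{n-k}\cdots T_k(t))$, rescaled so that all self-intersections are real crossings for $t\in[-1,1]$. The statement that this is what the trace map recovers suggests the following viewpoint. For a conjugate pair with normalization disk $\Delta$, consider the real curve $t\mapsto \varphi_s(t)+\overline{\varphi_s}(\bar t)$ restricted to a suitable real slice. Concretely, take $\varphi(t)=(t^n,\psi(t))$ with complex coefficients and use the trace of the map from the two disks, namely a real map of the form $t\mapsto \mathrm{Re}$-type Chebyshev combinations $(T_n(t)+\overline{T_n(t)},\dots)$. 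Its real image is a divide whose double points I would compute explicitly from the Puiseux data when the tangents of $D$ and $\overline D$ differ. In that case, after a linear change, $D$ and $\overline D$ are transverse with tangent lines $y=\pm i x$ to first order, their mutual intersection number is the product of multiplicities, and I would verify the node count directly.

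For the general germ I would proceed by induction on the resolution, following A'Campo's method. Group branches according to the Eggers/Kuo--Lu tree. At each vertex, first deform each group separately by the constructions above. Then apply A'Campo's translations and contractions: shrink each group's divide by a homothety $(x,y)\mapsto(\epsilon^{a}x,\epsilon^{b}y)$ adapted to the common Puiseux exponent, and translate the pieces so that their divides cross each other transversally in exactly the number of points given by the mutual intersection multiplicities. Real branches sharing tangents with each other, or with conjugate pairs, are handled by this blow-up-style recursion. Additivity, $\delta(C)=\sum\delta(C_i)+\sum_{i<j}(C_i\cdot C_j)$, then reduces the count to two facts. First, each component achieves its own bound. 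Second, each pairwise intersection is realized by real transverse crossings, except within a conjugate pair.

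I expect the main obstacle to be conjugate pairs with equal (real) tangent, and more generally pairs whose Puiseux expansions agree up to a high order. There the trace map no longer gives an immediately nondegenerate divide, and one must combine it with the contraction step at the right scale: first contract along the common part of the expansion, which is real, then apply the trace map to the differing tail. I would try to prove this by induction on the coincidence exponent. The key check is that the contraction preserves the property that all but one intersection of $D_s$ with $\overline{D_s}$ is real hyperbolic, and that the interaction with the other branches at coarser scales only adds transverse real crossings.
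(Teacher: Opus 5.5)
There is a genuine gap, and it starts in your first paragraph. Suppose a conjugate pair is deformed as two separate disks $D_s,\overline{D_s}$, which is what ``total number of nodes $\delta$'' means. Every real point of $D_s$ also lies on $\overline{D_s}$, so the real locus of $D_s\cup\overline{D_s}$ is finite. Consequently its real nodes are elliptic, its nodes with real branches are nonreal (a real one would be a triple point), and it carries no hyperbolic node at all. So ``all nodes hyperbolic except one elliptic node per pair'' cannot be achieved, and smoothing the elliptic node afterwards gives at most a small real oval.

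What is needed from the outset is a source in which the two normalization disks are already joined into the annulus $uv=s$. Its real structure $(u,v)\mapsto(\bar v,\bar u)$ has as fixed set the circle $v=\bar u$, $|u|=\sqrt s$. On that circle, $\theta\mapsto\gamma_Q(re^{i\theta})+\gamma_{\overline Q}(re^{-i\theta})$ is the closed real curve carrying the $\delta(Q\cup\overline Q,0)-1$ hyperbolic nodes; your eventual target is right. Your expression $\varphi_s(t)+\overline{\varphi_s}(\bar t)$ is this map only on the slice $|t|=r$. On real $t$, as in your Chebyshev description, it extends to a single real disk, i.e.\ a real branch, not a deformation of $D\cup\overline D$.

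The node count for distinct tangents is also not a routine check. It holds for the Newton--Puiseux parametrization normalized by $p\circ\gamma_Q=u^m$, in coordinates where the real structure is $(p,q)\mapsto(\bar q,\bar p)$. It fails for other parametrizations of the same pair: Example~\ref{ex:adapted-source-coordinate-needed} gives $1$ instead of $5$. The actual proof is a sector analysis around the $m$-th roots of unity, yielding $\sum_{\zeta\ne1}(m+\kappa(\zeta))$.

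Second, you misplace the main obstacle. A single pair with a common real tangent is handled by real blow-ups up to its last common real infinitely near point $c$. There the two tangents are automatically distinct and conjugate, and this is essentially your sketch. The real difficulty is several conjugate pairs at the same $c$. Pairs with the same nonreal tangent $L$, such as $\mathfrak q_1,\mathfrak q_2$ in Section~\ref{sec:worked-seven-branch-example}, share further infinitely near points that are nonreal. Neither real blow-ups nor A'Campo's real translations and contractions can separate them, so their contribution $2(Q_i\cdot Q_j)_c+2m_im_j$ must appear as real crossings between the trace circles themselves.

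Circles built separately for each group and then translated do not achieve this. The paper ties all radii to one parameter, $\rho_i(\lambda)=\lambda^{M/m_i}$, in a common adapted chart. It then analyzes the difference map along each component of $u^{m_i}=w^{m_j}$ (Lemma~\ref{thm:common-tangent-annuli}). Pairs with different tangent pairs need the normalizations $\kappa_i$ so that the limiting ellipses cross in four points. Every circle must also meet each real arc through $c$, in particular the real exceptional components, in exactly $2m_{\mathfrak q}$ points, or the bookkeeping of the descent breaks (Lemma~\ref{thm:distinct-tangent-pair-families}).

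Finally, the Leviant--Shustin criterion requires the absence of any other, possibly nonreal, singular points, and your plan never addresses this. The paper first realizes the parametrized family as a flat hypersurface via Remmert's theorem. It then compares Euler characteristics: the normalization of each fiber is one disk per real branch and one annulus per pair, which forces $\sum_q\delta(C_s,q)=\delta(C,0)-\imbr(C,0)$.
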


It is important to keep the prescribed real form. Indeed, every topological 
type of a complex
plane curve has a representative all of whose branches are real. Even if the 
problem of finding real morsifications was already solved for plane curves all 
of whose branches are real, this does not solve the problem above.  The 
original germ may have branches exchanged by conjugation, and
two real forms of the same complex singularity may have different real zero
sets and different distributions of real and nonreal nodes.

\subsection*{Historical account}

One can find a precursor to this problem in Charlotte Angas Scott's work of
1892 and 1893 on higher plane curve singularities.  She described a
{\em penultimate} form of a singularity involving nodes and {\em evanescent
loops}
\cite[pp.~301--302]{Scott1892HigherSingularities}
\cite[p.~221]{Scott1893NatureEffectSingularities}.  Existence of real
morsifications was proven first for totally real germs, that is, curve germs 
whose complex branches
are all real.  For these germs the bound above is  exactly $\delta(C,0)$.  
Gusein-Zade
constructed real morsifications for branches, by perturbing real 
parametrizations of the
branches using Chebyshev polynomials
\cite[Sections 4--5, especially Theorem 4]
{GuseinZade1974DynkinDiagrams}.  A'Campo independently gave a geometric 
construction
based on an embedded resolution, by an inductive sequence of translations of 
the strict transforms,
and contracions of the exceptional components
\cite[Section 2]{ACampo1975MonodromyDeploiement}.  For a recent account of
both  constructions, see
\cite[Sections 5.2--5.3]{ACampoPortilla2025Divides}.

The problem of existence of real morsifications remained for prescribed real 
forms with nonreal
branches, equivalently with pairs of complex conjugate branches.  Leviant and 
Shustin proved existence when the following condition
holds.  At every real point of the minimal resolution tree and for each
nonreal tangent line, the branches of the corresponding strict transform
tangent to that line form the union of a Newton nondegenerate germ and a germ
with smooth branches
\cite[Theorem 1]{LeviantShustin2018Morsifications}.  The {\em simplest} 
singularity
not covered by their theory consists of two transverse complex conjugate 
branches with characteristic Puiseux pairs $(2,3), (2, 7)$
\cite[Example 1(3)]{LeviantShustin2018Morsifications}.  In
\Cref{ex:leviant-shustin-first-missing-example}, we deal with this singularity 
and
compute a divide for it.  

The divide of a real morsification encodes topological data of the complex
singularity.  Gusein-Zade computed the intersection matrix of a distinguished
basis of vanishing cycles from the separatrices and families of trajectories of
the real gradient flow
\cite[Theorem 1]{GuseinZade1974IntersectionMatrices}.  A'Campo described a link
$L(D)\subset S^3$ for each divide $D$.  For connected $D$, he proved
that $L(D)$ is fibered.  He also computed its monodromy as an ordered product of
right handed Dehn twists around curves obtained from the double points and 
interior
regions of $D$
\cite[Theorems 1 and 2 and Section 4]{ACampo1998GenericImmersions}.
  When
$D$ is the divide of a real morsification of a totally real plane curve
singularity, $L(D)$ is isotopic to the singularity link and its fiber surface is
diffeomorphic to the Milnor fiber
\cite[Theorems 1 and 2]{ACampo1999RealDeformations}.  The Dehn twists around
these curves generate the geometric monodromy group of the unfolding of the 
singularity, and their
ordered product is the local geometric monodromy
\cite[Theorem 3]{ACampo1999RealDeformations}.  Actually,  the
curves in the factorization form a distinguished system of quadratic vanishing
cycles
\cite[Section 3]{ACampo2001QuadraticVanishingCycles}. Couture and Perron 
described an algorithm that produces a braid
whose closure is the algebraic link $L(D)$ 
\cite{CouturePerron2000RepresentativeBraids}.

For a morsification of a
totally real singularity, Balke and Kaenders proved that the abstract
A'Campo--Gusein-Zade diagram determines the complex topological type, provided
that the divide is a partition in their sense
\cite[Theorem 2.5 and Corollary 2.6]{BalkeKaenders1996CoxeterDynkin}. Leviant
and Shustin subsequently removed the partition assumption in the
theorem of Balke and Kaenders and the restriction to totally real
singularities.  They proved that the diagram of an arbitrary real morsification
determines the weak real topological type: the complex topological type, the
partition of the branches into real branches and conjugate pairs, and the cyclic
order of the real branches
\cite[Section 4, especially Theorem 3]{LeviantShustin2018Morsifications}.

We mention some other applications and generalizations of divides.  Ishikawa
proved that the fiber surface of a connected divide is obtained from a disk by
successively plumbing positive Hopf bands \cite{Ishikawa2002Plumbing}, and
Hirasawa gave a procedure for drawing the associated link together with a
fiber surface
\cite{Hirasawa2002Visualization}.  Gibson and Ishikawa introduced oriented
divides, which realize every oriented link in $S^3$, and free divides
\cite[Definition 1.1 and Theorem 1.2]
{GibsonIshikawa2002OrientedDivides}
\cite{GibsonIshikawa2002GordianIntervals}.  Kawamura defined graph divides and
proved that their links are quasipositive
\cite[Definition 3.1 and Theorem 5.1]{Kawamura2004GraphDivides}.  Sugawara and
Yoshinaga defined divides with cusps and used their links to describe Kirby
diagrams for the complements of complexified real affine line arrangements
\cite[Definition 3.1 and Theorem 5.1]
{SugawaraYoshinaga2022DividesCuspsKirby}. Later, Sugawara showed that a link 
comes from a divide with cusps if 
and only if a 
nontrivial
involution of $S^3$ preserves the link setwise, preserves its orientation, and
has fixed points
\cite[Theorem 1.1]{Sugawara2025DividesCusps}.

Fomin--Pylyavskyy--Shustin--Thurston related divides to quivers and other 
combinatorial objects like plabic
graphs.  For a divide
$D$ coming from a real plane curve, they obtain a quiver $Q(D)$ by
orienting the A'Campo--Gusein-Zade diagram and forgetting its vertex colors
\cite[Definitions 2.1, 3.1, and 4.2]
{FominPylyavskyyShustinThurston2022MorsificationsMutations}.  For an algebraic
divide $D$, its quiver $Q(D)$ determines the complex topological type
\cite[Theorem 4.4]
{FominPylyavskyyShustinThurston2022MorsificationsMutations}.  Their main
conjecture says that the quivers associated with two real morsifications are
mutation equivalent if and only if the corresponding singularities have the
same complex topological type
\cite[Conjecture 5.5]
{FominPylyavskyyShustinThurston2022MorsificationsMutations}.  In particular,
this conjecture predicts that the quivers associated with real morsifications
of different real forms of the same complex singularity are mutation
equivalent.  Theorem~A guarantees that every such real form admits a real
morsification.

\subsection*{The trace map and a sketch of the proof}

Let us first describe the trace map for a conjugate pair $Q,\overline Q$ of
nonreal branches with distinct tangent lines.  Let $\gamma_Q$ and
$\gamma_{\overline Q}$ be the primitive parametrizations chosen in
\Cref{sec:conjugate-pair-case}.  On the smoothing
$\mathcal A=\{(u,v,s):uv=s\}$ of the nodal source $uv=0$, define the
{\em trace map} by
\[
	\begin{aligned}
	\widehat\Phi:\mathcal A&\longrightarrow\C^2,\\
	(u,v,s)&\longmapsto\gamma_Q(u)+\gamma_{\overline Q}(v).
	\end{aligned}
\]
The map $\widehat\Phi$ is equivariant for the involution on the source
$(u,v,s)\mapsto(\overline v,\overline u,\overline s)$ and the ambient real
structure, as explained in \Cref{sec:trace-map}.  For $r>0$, the points of
$\mathcal A_{r^2}=\{(u,v):uv=r^2\}$ fixed by the involution on the source form
the circle
$(u, v) = (re^{i\theta}, re^{-i\theta})$.  By equivariance, the image of this
fixed circle under $\widehat\Phi$ lies in the real plane. Moreover, a
parametrization
of this image is
\[
	\alpha_r(\theta)
	=
	\gamma_Q(re^{i\theta})+
	\gamma_{\overline Q}(re^{-i\theta}).
\]
By \Cref{thm:distinct-tangent-conjugate-pair-predivide}, for every sufficiently
small $r>0$, the image $\alpha_r(\R/2\pi\Z)$ is a predivide for
$Q\cup\overline Q$.  We now choose the adapted coordinates $(p,q)$ of
\Cref{eq:distinct-conjugate-adapted-p-coordinate,eq:distinct-conjugate-real-structure}.
If $m$ is the multiplicity of $Q$ and
$\gamma_Q(u) = (u^m, \varphi(u))$, where
$\varphi(u)=\sum_{n>m}a_nu^n$ is convergent, set
\[
	z_r(\theta)
	=e^{im\theta}
	+\sum_{n>m}\overline{a_n}r^{n-m}e^{-in\theta}.
\]
Then $\alpha_r=L_r\circ z_r$, where
$L_r(z)=(r^mz,r^m\overline z)$ is a real linear isomorphism onto the real
plane.  Thus, to draw this predivide, it is enough to plot the image
$z_r(\R/2\pi\Z) \subset \C$ and then apply $L_r$ (see
\Cref{eq:conjugate-zr-definition,eq:conjugate-alpha-zr-reduction,eq:conjugate-Lr-definition}).
The map $\alpha_r$ is an immersion, and its multiple points have pairwise
distinct tangent lines.  Exactly $\delta(Q\cup\overline Q,0)-1$ unordered
pairs of distinct points of the source map to the same point.  In the case of
real branches, taking a quotient of this trace map recovers Gusein-Zade's Chebyshev
construction, as shown in \Cref{sec:chebyshev-normalization}.

We give the global construction in
\Cref{roadmap:global-construction}.  We blow up the required real infinitely
near points and choose real arcs in the normalizations of the real strict
transforms.  We
then contract the exceptional components in reverse order.  During the
descent, we consider each point $c$ that is the last common real infinitely
near point of some conjugate pair.  The pairs associated with $c$ form the
packet $\mathcal P_c$. We partition it into the unordered pairs of conjugate 
tangent lines.
At $c$, we construct the circles obtained from the trace map and study their
intersections with
the real arcs and the real exceptional components through $c$.

We choose the radii of all circles in a packet as functions of the same
positive parameter.  With this choice, the parametrizations have the required
numbers of pairs of distinct points of the source with the same image.  The
corresponding self-intersections, the intersections between distinct circles,
and the intersections with the real arcs are transverse; see
\Cref{thm:common-tangent-annuli,thm:distinct-tangent-pair-families}.  After
fixing these parametrizations, we replace the parameter by a large enough power
of the global parameter $\lambda$.  Translations with larger powers preserve
the intersections already obtained and make the intersections with the
remaining exceptional divisor distinct and transverse.  After the final
contraction, we can further perturb to split any ordinary point of multiplicity 
$k$ into
$\binom{k}{2}$ nodes. This construction produces at least
$\delta(C,0)-\imbr(C,0)$ nodes.  A comparison with the Euler characteristic
of the Milnor fiber gives the reverse inequality for the sum of the local delta
invariants of the deformed curve.   Thus,
for every sufficiently small nonzero parameter, these nodes are all the
singular points of the corresponding fiber.  For every 
positive (and small) real parameter, they are real hyperbolic.  So the
constructed real 
deformation is a real morsification of the original singularity.

\subsection*{Organization of the paper}

Let us finish by indicating where the different parts of the proof appear.
The conventions for real curve germs, deformations, predivides, and resolution
are collected in \Cref{sec:conventions-definitions}.  We introduce the trace
map and study its real fibers in \Cref{sec:trace-map}.  The case of a real
branch is considered in \Cref{sec:chebyshev-normalization}.  Conjugate pairs with distinct
tangent lines are considered in \Cref{sec:conjugate-pair-case}. This section
also contains the first example left open by Leviant and Shustin.  The assembly
of the orbits of branches along the resolution is carried out in
\Cref{sec:global-predivide}, where we prove Theorem~A.  Finally,
\Cref{sec:worked-seven-branch-example} is devoted to a germ with seven branches
and gives an explicit polynomial that defines a real morsification.

\subsection*{Aknowledgements} I wish to thank Norbert A'Campo for many 
enriching conversations and for helping me make the historical account above 
more 
complete.
\section{Conventions and notation}
\label{sec:conventions-definitions}

The ambient germ is $(X,0)=(\C^2,0)$. We use the notation $(X,0)$ to
emphasize that the coordinates used will not be always the standard ones from
$(\C^2,0)$: we will deal with linear change of coordinates.

\begin{definition}\label{def:real-structure}
		A \textbf{real structure} on a complex analytic space $Y$ is an antiholomorphic
		automorphism $\sigma_Y:Y\to Y$ such that
	$
		\sigma_Y^2=\operatorname{id}_Y.
	$
	That is, an antiholomorphic involution.
	Its fixed locus is the \textbf{real part} of $Y$, denoted by
	\[
		Y_\R=\operatorname{Fix}(\sigma_Y).
		\]
\end{definition}

\begin{remark}\label{rem:local-coordinates-real-structure}
	We define real structures intrinsically because our constructions use 
	linear changes of holomorphic coordinates, so there is no
	preferred
	coordinate system and this language seems more natural. Nonetheless, here 
	we explain that conjugation is, in some sense, the only real structure in 
	an open ball.  If $Y$ is smooth 
	of complex dimension $n$ at
	$c\in Y_\R$, then the points fixed by the real analytic involution
	$\sigma_Y$ form a smooth real analytic manifold near $c$, with tangent space
	\[
		T_cY_\R=\operatorname{Fix}(d\sigma_{Y,c}).
	\]
	 We choose real analytic coordinates on
	$Y_\R$ near $c$ and extend them holomorphically.  The uniqueness of this
	extension gives
	$z\circ\sigma_Y=\overline z$. That is, up to change of coordinates, real 
	structure can always be thought of as conjugation.
\end{remark}

\begin{definition}
		A complex plane curve germ $(C,0)\subset(X,0)$, in a space with ambient
		real
	structure $\sigma_X$, is a \textbf{real plane curve germ} if
	$\sigma_X(C) = C$.  Equivalently, in
	coordinates for which $\sigma_X$ is ordinary conjugation, $C$ is defined
	by a real germ $F\in\R\{x,y\}$.
	We denote the \textbf{real locus} of $C$ by
	$C_\R=\operatorname{Fix}(\sigma_X|_C)=C\cap X_\R$.
\end{definition}

A \textbf{branch} of $(C, 0)$ is the germ of an irreducible component.  If
$(C,0)$ is real, the ambient real structure permutes its branches.  A branch
that is fixed by this action is a \textbf{real branch}. All the other branches 
are 
grouped in
\textbf{conjugate pairs} $Q,\overline Q$.  A \textbf{primitive
parametrization} of a branch is a normalization map from a small disk.
Equivalently, it is injective near the origin.  We call its source the
\textbf{normalization disk}.  The \textbf{multiplicity} $\mt(C,0)$ is the
order of the lowest nonzero homogeneous part of a local equation of $C$.
Equivalently, it is the intersection multiplicity with a line through the
origin that is not contained in the tangent cone of $C$.

For a real branch $B$, we say that the  parametrization $\gamma$ is written in
\textbf{real Newton--Puiseux form} if real analytic coordinates $(x,y)$ in the target and a
real coordinate $w$ on the normalization disk have been chosen so that
\[
	\gamma(w)
	=
	\left(w^m,\sum_{j>m}a_jw^j\right),
	\qquad a_j\in\R,\qquad m=\mt(B,0).
\]
The coordinates in the target and the parameter on the normalization are part
of this input.
We follow Wall for intersection multiplicities
\cite[Section 1.2]{Wall2004SingularPoints} and for parametrizations,
normalization, multiplicity, and tangents
\cite[Theorem 2.2.6 and Lemmas 2.3.1--2.3.3]{Wall2004SingularPoints}.

\begin{definition}\label{def:real-tangent}
	A complex plane branch has a \textbf{real tangent} if its tangent line is
	invariant under the ambient real structure.  Otherwise its tangent is not
	real.  In coordinates for which the real structure is ordinary conjugation,
	a real tangent line is defined by a real linear equation.
	We use the same symbol $\sigma_X$ for the action of the ambient real
	structure on tangent lines and write
	\[
	\overline T:=\sigma_X(T)
	\]
	for the conjugate of a tangent line $T$.
\end{definition}

If $A$ and $B$ are distinct branches defined by  $f_A$ and
$f_B$, their \textbf{local intersection multiplicity} is defined by
\[
(A\cdot B)_0
=
\dim_\C\C\{x,y\}/(f_A,f_B).
\]

\begin{definition}
	Let $(C, 0)$ be a reduced complex plane curve germ defined by $F$, let
	$\mathcal O_C=\C\{x,y\}/(F)$
	be its local ring, and let $\widetilde{\mathcal O}_C$ be its
	normalization.  The \textbf{$\delta$ invariant} is the nonnegative integer
	\[
	\delta(C,0)
	=
	\dim_\C\bigl(\widetilde{\mathcal O}_C/\mathcal O_C\bigr).
	\]
\end{definition}

Suppose a curve germ has branches $B_1,\ldots,B_r$. Then, by applying the 
formula
$\delta(D\cup D') = \delta(D)+\delta(D')+(D\cdot D')_0$
\cite[\S~6.6, p.~152]{Wall2004SingularPoints} iteratively, we get the
following formula for $\delta(C,0)$ in terms of its branches:
\begin{equation}\label{eq:delta-branch-formula}
\delta(C,0)
=
\sum_i\delta(B_i,0)
+
\sum_{i<j}(B_i\cdot B_j)_0.
\end{equation}

\begin{definition}
  We denote by $\rebr(C,0)$  the
	number of real branches of $(C,0)$ at the origin. And we denote by
	$\imbr(C,0)$ the number of complex
	conjugate pairs of branches.
\end{definition}

Let $(C, 0) \subset (X, 0) = (\C^2, 0)$ be a plane curve singularity. Let 
$F:\C^2\to\C$ define a representative of $(C,0)$ near $0$.  A \textbf{Milnor 
ball} for
$(C,0)$ is an open Euclidean ball
$\mathbb B=\mathbb B_\epsilon$, centered at $0$ such that the punctured curve
curve
$\{F = 0\}\cap(\mathbb B\setminus\{0\})$ is smooth and such that the closure of
$\{F=0\}\cap\mathbb B$ meets each sphere
$\partial\mathbb B_{\epsilon'}$ transversely for all $0<\epsilon'\le\epsilon$. 
An 
$\epsilon$ with this property is called a  \textbf{Milnor radius}.

\begin{definition}
	Let $\mathbb B$ be a Milnor ball for $(C,0)$.  The corresponding
	\textbf{representative of the germ in the Milnor ball} is
	\[
		\{F=0\}\cap\mathbb B.
	\]
	Its closure in $\overline{\mathbb B}$ meets
	$\partial\mathbb B$ transversely, and its only possible singular point is
	the origin.  If the
	germ is real, we choose $\mathbb B$ invariant under $\sigma_X$ and write
	$\mathbb B_\R := \mathbb B\cap X_\R$.  Its closure in $X_\R$ is
	$\overline{\mathbb B}_\R := \overline{\mathbb B}\cap X_\R$.
	We call $\overline{\mathbb B}_\R$ the \textbf{closed real Milnor disk}.  In
	coordinates in which $\sigma_X$ is conjugation, these sets are
	$\mathbb B\cap\R^2$ and $\overline{\mathbb B}\cap\R^2$, respectively.
\end{definition}

\begin{definition}
	Choose a reduced real equation $F\in\R\{x,y\}$ for $(C,0)$ and a
	 Milnor ball $\mathbb B$ invariant under the real
	structure.  A \textbf{real deformation} of the representative $C$ defined
	by $F$ is a
	convergent power series
	\[
	\mathcal F(x,y,s)\in\R\{x,y,s\}
	\]
	such that $F_0=F$, where $F_s(x,y):=\mathcal F(x,y,s)$.  Here $s$ is
	a coordinate on a small disk $\Delta_s$, and conjugation in $(x,y,s)$
	is the real structure on the family.  We denote by
	\[
	C_s=\{F_s=0\}\cap \mathbb B
	\]
	the fibers of the deformation.
	For real $s$, we denote by $C_{s,\R}=C_s\cap\mathbb B_\R$ the real part of
	these fibers.
\end{definition}

\begin{definition}
	Let $P$ be a reduced complex plane curve and let $p\in P$.  We call
	$p$ an \textbf{ordinary double point} if there are
	complex analytic coordinates $(u,v)$ centered at $p$ in which a reduced
	local equation of $P$ is $uv=0$.
\end{definition}

\begin{definition}
	Let $P$ be a representative of a reduced real plane curve and let
	$p\in P_\R$ be an ordinary double point.  We call $p$ a
	\textbf{real hyperbolic node} if there are real analytic coordinates $(X,Y)$
	centered at $p$ in which a real local equation of $P$ is
	\[
	XY.
	\]
	In particular, $P_\R$ has two transverse real arcs through $p$.
\end{definition}

\begin{definition}
	Let $\mathcal F$ be a real deformation of the fixed representative of
	$(C,0)$ in the Milnor ball.  The deformation is \textbf{real nodal} if,
	after possibly shrinking $\Delta_s$, the following hold:
	\begin{enumerate}
		\item for each $s\in\Delta_s$, the zero set $C_s=\{F_s=0\}$ is  
		transverse 
		to $\partial\mathbb B$ (and smooth along it),
		\item for every $s\in\Delta_s\setminus\{0\}$, the set
			$\operatorname{Sing}(C_s)$ consists only of nodes, and
		\item the number $\#\operatorname{Sing}(C_s)$ is independent of
			$s\in\Delta_s\setminus\{0\}$.
	\end{enumerate}
	When speaking of the real nodal deformation, we restrict this equivariant
	nodal family to sufficiently small nonnegative real parameters
	\cite[Introduction]{LeviantShustin2018Morsifications}
	\cite[Definitions~1.8 and~1.9]{FominPylyavskyyShustinThurston2022MorsificationsMutations}.
\end{definition}

\begin{definition}
	Let $\mathcal F$ be a real nodal deformation of $(C,0)$, and put
	$f_{s,\R}=F_s|_{\mathbb B_\R}$.  The deformation is a
	\textbf{real morsification} if, for every  $s>0$ small enough, the
	following hold:
	\begin{enumerate}
		\item every critical point of the holomorphic function $F_s$ in
		$\mathbb B$ is real,
		\item every critical point of the real function $f_{s,\R}$ is
			nondegenerate, and
		\item \label{it:only_saddles} the critical points of $f_{s,\R}$ lying
			on its zero level
			$C_{s,\R}$ are precisely its saddle points.
	\end{enumerate}
\end{definition}

\begin{remark}[The convention on the zero level]\label{rem:zero-level-convention}
	The cited definitions of real morsification require all critical points to
	be real and Morse and all saddle points to lie on the zero level
	\cite[Introduction]{LeviantShustin2018Morsifications}
	\cite[Definition~1.10]{FominPylyavskyyShustinThurston2022MorsificationsMutations}.
	In the cited definitions, these conditions still allow a local maximum or
	minimum on the zero level.  For example, the constant real nodal deformation
	of the elliptic node
	\[
		F_s(x,y)=x^2+y^2
	\]
	has one real Morse critical point, a minimum at level zero, and no saddle
	points.  Thus those conditions hold vacuously, even though its zero set has
	an elliptic node.  We remove this loophole in
	condition~\ref{it:only_saddles} by requiring that the critical points on the
	zero level be precisely the saddle points which is equivalent to requiring
	that maxima and minima have positive and negative values respectively.
	With this convention, the
	characterization
	of Leviant and Shustin
	\cite[Remark~4]{LeviantShustin2018Morsifications} states that a
	real nodal
	deformation is a real morsification if and only if, for every
	small
	$s>0$, the curve $C_s$ has precisely
	\[
	\delta(C,0)-\imbr(C,0)
	\]
	real hyperbolic nodes in $\mathbb B$ and no other singularities.  Without
	that
	convention, the above trivial family violates the criterion of Leviant and
	Shustin.

\end{remark}

\subsection*{Divides}

Here we introduce A'Campo's notion of a \textbf{divide} for immersed real curves
in a disk
\cite{ACampo1975MonodromyDeploiement,ACampo1999RealDeformations}.

\begin{definition}
	For a representative $P$ of a reduced real plane curve and a point
	$p\in P_\R$, we say that $p$ is an
	\textbf{ordinary real point of multiplicity $k$} if there are real
	analytic coordinates at $p$ in which a reduced local equation of $P$ is
	\[
	g_1\cdots g_k.
	\]
	Where $g_i\in\R\{x,y\}$, each $g_i = 0$ is smooth at $p$, and the linear 
	parts $dg_i(p)$ define different lines pairwise.  Equivalently, the real 
	curve has
	$k$ smooth local branches through $p$ with pairwise distinct tangent
	lines.
\end{definition}

Our constructions will often produce images of smooth immersions with ordinary
real multiple points that are later perturbed into ordinary double points.

\begin{definition}\label{def:predivide}
	Let $D$ be a closed real disk.  A \textbf{predivide} in $D$ is
	a smooth immersion
	\[
		\alpha:J\to D
	\]
	from a one-dimensional compact manifold such that
	\begin{enumerate}
		\item $\alpha^{-1}(\partial D)=\partial J$, and the restriction of
		$\alpha$ to $\partial J$ is injective.
		\item The image $P = \alpha(J)$ is transverse to $\partial D$.
		\item Every self-intersection is an ordinary multiple point. That is, 
		if $p\in P$ has preimages
		$\alpha^{-1}(p)=\{x_1,\ldots,x_r\}$ with $r\geq 2$,  
		then the tangent lines
		\[
			d\alpha(T_{x_i}J)\subset T_pD,
			\quad i=1,\ldots,r,
		\]
		are pairwise distinct.
	\end{enumerate}
	A \textbf{divide} is a predivide for which every multiple point of $P$ is
	actually an ordinary double point. By abuse of language and notation we
	identify the
	divide $\alpha$ with its image.
\end{definition}

If the closure of $C_{s, \R}$ in $\overline{\mathbb B}_\R$ is a divide, we call 
it the divide of the deformation.  For the
deformations constructed later in \Cref{thm:global-predivide}, the 
normalization of the fiber has
one disk for each real branch and one annulus for each conjugate pair.
For more a more comprehensive introduction to the theory of divides, see the 
references given in the introduction to this work.

	\begin{definition}
		\label{def:pairwise-double-count}
		Let $\alpha:J\to S$ be a smooth immersion of a compact one dimensional
		manifold $J$ into a real surface $S$. Suppose that every multiple point
		of $\alpha$ is ordinary.  Denote $P=\alpha(J)$ and define
		\[
			\dbl(P)
			:=
			\#\bigl\{\{x,y\}\subset J:x\ne y,\ \alpha(x)=\alpha(y)\bigr\}.
		\]
		So an ordinary real point of multiplicity $k$ contributes 
		$\binom{k}{2}$ to $\dbl(P)$.
		If $J_1,J_2\subset J$ are disjoint unions of connected components and
		$P_i=\alpha(J_i)$, we similarly define
		\[
			\dbl(P_1,P_2)
			:=
			\#\bigl\{(x_1,x_2)\in J_1\times J_2:
			\alpha(x_1)=\alpha(x_2)\bigr\}.
		\]
	\end{definition}

	\section{Trace map}
	\label{sec:trace-map}

	\subsection*{Orbits of branches}

	Let $(X, 0)$ be a smooth germ of a surface equipped with a real structure
	$\sigma_X$.	Let $(C,0)\subset (X,0)$ be a reduced real plane curve
		germ, and
	let
	\[
	(C,0)=\bigcup_{B\in\mathcal B}(B,0)
	\]
	be its decomposition into complex branches at the origin. The real structure
	$\sigma_X$ sends each
	branch to its conjugate branch.  We denote the induced
	involution of the set of branches
	$\mathcal B$ by $\sigma$, and set
	\[
	\Omega=\mathcal B/\langle\sigma\rangle .
	\]
	Fix an orbit $\mathfrak o\in\Omega$, and write
	\[
		\mathfrak o=\{B\}
		\qquad\text{or}\qquad
		\mathfrak o=\{Q,\overline Q\},
	\]
	according as it consists of a real branch or a pair of nonreal conjugate
		branches.  Fix an invariant Milnor ball $\mathbb B$ centered at the origin.
			We also use $C,B,Q,\overline Q$, but not $X$, for their representatives
			in $\mathbb B$.

	\subsection*{Compatible parametrizations}

	Choose a disk $\Delta\subset\C$, centered at zero and invariant under
	conjugation.  Let $\Delta_u$ and $\Delta_v$ be two copies of $\Delta$,
	with coordinates $u$ and $v$.  Choose primitive parametrizations
	\[
	\gamma_1:\Delta_u\longrightarrow X,
	\qquad
	\gamma_2:\Delta_v\longrightarrow X
	\]
	as follows:
	\begin{itemize}
		\item For a conjugate pair, choose a primitive parametrization
		$\gamma_Q: \Delta_u\to Q$, and set
		\[
		\gamma_1(u)=\gamma_Q(u),
		\qquad
		\gamma_2(v)=\sigma_X\bigl(\gamma_Q(\overline v)\bigr).
		\]
		Then $\gamma_2$ is a primitive holomorphic parametrization of
		$\overline Q$. We also write $\gamma_{\overline Q}:=\gamma_2$.
		\item For a real branch, choose a primitive parametrization
		$\gamma:\Delta_u\to B$ compatible with complex conjugation, meaning that
		\[
			\sigma_X\bigl(\gamma(u)\bigr)=\gamma(\overline u).
		\]
		Define
		\[
			\gamma_1(u)=\gamma(u),
			\qquad
			\gamma_2(v)=\gamma(v).
		\]
	\end{itemize}
	In both cases we define
	\[
	\boldsymbol\gamma=(\gamma_1,\gamma_2).
	\]

	\subsection*{Trace map}
		Choose a small parameter disk $\Delta_s$ invariant under conjugation.  A
		smoothing of the nodal union of the two normalization disks is
	\begin{equation}\label{eq:canonical-source-representative}
		\mathcal A
		=
		\{(u,v,s)\in\Delta_u\times\Delta_v\times\Delta_s:uv=s\}.
	\end{equation}
	The pair $\boldsymbol\gamma=(\gamma_1,\gamma_2)$ defines the \emph{trace
	map} as follows:
	\begin{equation}\label{eq:trace-map}
		\begin{aligned}
			\widehat\Phi_{\boldsymbol\gamma}:\mathcal A&\longrightarrow X,\\
			(u,v,s)&\longmapsto \gamma_1(u)+\gamma_2(v).
		\end{aligned}
	\end{equation}
	The map takes values in $X$, but its image is not necessarily contained
	in $\mathbb B$.  We restrict its source to
	$\widehat\Phi_{\boldsymbol\gamma}^{-1}(\mathbb B)$ to obtain a map into
	$\mathbb B$.
	Since $\gamma_1(0)=\gamma_2(0)=0$, the restriction of
	$\widehat\Phi_{\boldsymbol\gamma}$ to $s=0$ has an image whose germ is
	$(Q\cup\overline Q,0)$ when the orbit is a conjugate pair.  If the orbit
	consists of a real branch, both disks $\Delta_u$ and $\Delta_v$ map to
	$(B,0)$.

	\subsection*{Quotient symmetry}

	Define the group
	\begin{equation}\label{eq:orbit-automorphism-group}
		G:=
		\begin{cases}
			\{1\}, & \text{if } \mathfrak o=\{Q,\overline Q\},\\
			S_2, & \text{if } \mathfrak o=\{B\}.
		\end{cases}
	\end{equation}
	acting on $\mathcal A$.  If the orbit is a conjugate pair, the action is
	trivial.  If it consists of a real branch,
	the nontrivial element of $S_2$ acts by
	\begin{equation}\label{eq:real-branch-source-swap}
		(u,v,s)\longmapsto(v,u,s).
	\end{equation}
	This swap leaves the trace map invariant because
	$\gamma(u)+\gamma(v)=\gamma(v)+\gamma(u)$.  Thus in both cases
	the trace map $\widehat\Phi_{\boldsymbol\gamma}$ is invariant under $G$.
	Therefore we can define
	\begin{equation}\label{eq:orbit-quotient-source}
		\mathcal S_{\boldsymbol\gamma}:=\mathcal A/G
	\end{equation}
	and obtain the induced map
	\begin{equation}\label{eq:quotient-trace-map}
		\Phi_{\boldsymbol\gamma}:\mathcal
		S_{\boldsymbol\gamma}\longrightarrow X,
		\qquad
		\Phi_{\boldsymbol\gamma}\circ q_{\boldsymbol\gamma}
		=
		\widehat\Phi_{\boldsymbol\gamma},
	\end{equation}
	where $q_{\boldsymbol\gamma}:\mathcal A\to
	\mathcal S_{\boldsymbol\gamma}$ is the quotient map.  We call
	$\Phi_{\boldsymbol\gamma}$ the \emph{quotient trace map}.

	\subsection*{Real structures and equivariance}

	We endow $\mathcal A$ with the real structure (recall
	\Cref{def:real-structure}) given by
	\begin{equation}\label{eq:canonical-source-real-structure}
		\sigma_{\mathcal A}(u,v,s)
		=
		(\overline v,\overline u,\overline s).
	\end{equation}
	It preserves the equation $uv=s$, and
	$\sigma_{\mathcal A}^2=\operatorname{id}_{\mathcal A}$, so it is a real
	structure in the sense of \Cref{def:real-structure}.  The map
	$\sigma_{\mathcal A}$ commutes with the action of $G$, hence descends to
	a real structure
	\[
	\sigma_{\boldsymbol\gamma}:\mathcal S_{\boldsymbol\gamma}\longrightarrow
	\mathcal S_{\boldsymbol\gamma}
	\]
	characterized by
		$\sigma_{\boldsymbol\gamma}\circ q_{\boldsymbol\gamma}
		=
		q_{\boldsymbol\gamma}\circ\sigma_{\mathcal A}.$
	And so $ \sigma_X\circ\widehat\Phi_{\boldsymbol\gamma}
		=
		\widehat\Phi_{\boldsymbol\gamma}\circ\sigma_{\mathcal A}.$
	Therefore the induced map is equivariant, that is,
	\begin{equation}\label{eq:trace-equivariance-down}
		\sigma_X\circ\Phi_{\boldsymbol\gamma}
		=
		\Phi_{\boldsymbol\gamma}\circ\sigma_{\boldsymbol\gamma}.
	\end{equation}

	\subsection*{The fibers}

	Consider the projection
	$\pi_{\mathcal A}:\mathcal A\longrightarrow\Delta_s$, with
	$\pi_{\mathcal A}(u,v,s)=s$, and it descends to
	$\pi_{\boldsymbol\gamma}:\mathcal S_{\boldsymbol\gamma}
	\longrightarrow\Delta_s$.
	For $s \in \Delta_s$, denote
	\[
	\mathcal A_{s}
		=
		\pi_{\mathcal A}^{-1}(s)
		=
	\{(u,v,s)\in\Delta_u\times\Delta_v\times\{s\}:
	uv=s\}.
	\]
	The fiber of $\pi_{\boldsymbol\gamma}$ over $s$ is
	$\mathcal S_{\boldsymbol\gamma,s}
	= \pi_{\boldsymbol\gamma}^{-1}(s) = \mathcal A_{s}/G$.

	\subsection*{The real fibers}

	The central fiber of $\mathcal A$ is the nodal union
	\[
	\mathcal A_0
	=
	(\Delta_u\times\{0\})\cup(\{0\}\times\Delta_v),
	\]
		of two disks, and every nonzero fiber $\mathcal A_{s}$ is an annulus.
		For  $s$ real and small enough, we consider the circle
			\begin{equation}\label{eq:positive-real-source-circle}
				\widehat{\mathcal R}_{s}
				=
				\{(u,v,s)=(\sqrt{s}\,e^{i\theta},\sqrt{s}\,e^{-i\theta},s):
				\theta\in\R/2\pi\Z\}
			\end{equation}
		in the annulus $\mathcal A_{s}$.  This circle is exactly the fixed
		point
		set of $\sigma_{\mathcal A}$ on $\mathcal A_{s}$.  Indeed, each
		point
			of $\widehat{\mathcal R}_{s}$ satisfies $uv=s$ and
		\[
			\overline v=u,\qquad \overline u=v.
		\]
		Since $s \in \R$, the real structure $\sigma_{\mathcal A}$
		satisfies $\sigma_{\mathcal A}(u, v, s) = (u, v, s)$
		on $\widehat{\mathcal R}_{s}$.  Conversely, if
			$(u,v,s)\in\mathcal A_{s}$ is fixed by $\sigma_{\mathcal A}$, then
			$u=\overline v$ and $v = \overline u$.  Since $uv=s$,
			this
			gives that $|u|^2 = s$.  Hence $u=\sqrt{s}\,e^{i\theta}$ and
			$v=\sqrt{s}\,e^{-i\theta}$ for some $\theta\in\R/2\pi\Z$.
			Therefore, we can conclude
		\[
			\operatorname{Fix}
			\bigl(\sigma_{\mathcal A}|_{\mathcal A_{s}}\bigr)
			=
			\widehat{\mathcal R}_{s}.
		\]
		In the quotient of the source we consider the real subset
		$\mathcal R_{\boldsymbol\gamma,s}\subset
		\mathcal S_{\boldsymbol\gamma,s}$ defined by
		\begin{equation}\label{eq:quotient-distinguished-real-set}
		\mathcal R_{\boldsymbol\gamma,s}
		:=
		q_{\boldsymbol\gamma}(\widehat{\mathcal R}_{s})
		\end{equation}
		and study this set in the next two sections.  This subset is always contained
		in the fixed set of $\sigma_{\boldsymbol\gamma}$ on the corresponding
		fiber of $\mathcal S_{\boldsymbol\gamma}$.
		For every $p\in\mathcal R_{\boldsymbol\gamma,s}$, equivariance of
		$\Phi_{\boldsymbol\gamma}$ gives
		\[
			\sigma_X\bigl(\Phi_{\boldsymbol\gamma}(p)\bigr)
			=
			\Phi_{\boldsymbol\gamma}\bigl(\sigma_{\boldsymbol\gamma}(p)\bigr)
			=
			\Phi_{\boldsymbol\gamma}(p).
		\]
		Thus
		\begin{equation}\label{eq:trace-distinguished-real-image}
			\Phi_{\boldsymbol\gamma}(\mathcal R_{\boldsymbol\gamma,s})
			\subset X_\R.
		\end{equation}
		After shrinking $\Delta_s$ if necessary, these images lie in
		$\mathbb B_\R$. The next two sections specialize this construction to 
		the
		two types of orbits of branches.  In the case of a real branch, the
		distinguished
		real subset is an interval in $\mathcal S_{\boldsymbol\gamma,s}$. And for a
		conjugate pair, it remains a circle.

	\section{The case of a real branch}
	\label{sec:chebyshev-normalization}

	We specialize the construction of \Cref{sec:trace-map} to an orbit
	consisting of a real branch, $\mathfrak o=\{B\}$.  Consider a real primitive parametrization
	\[
		\gamma(w)=\bigl(x_\gamma(w),y_\gamma(w)\bigr),
		\quad x_\gamma,y_\gamma\in\R\{w\}.
	\]
	Then $\boldsymbol\gamma=(\gamma,\gamma)$.
	By \Cref{eq:orbit-automorphism-group,eq:real-branch-source-swap},
	$G=S_2$, acting on $\mathcal A$ by exchanging $u$ and $v$.

	By \Cref{eq:canonical-source-representative,eq:orbit-quotient-source}, the
	quotient of the source is
	$\mathcal S_{\boldsymbol\gamma} = \mathcal A/S_2$.
	The functions $t=u+v$ and $s=uv$ form a coordinate system on this
	quotient:
	\[
		\bigl(\C\{u,v,s\}/(uv-s)\bigr)^{S_2}
		=
		\C\{u+v,s\}
		=
		\C\{t,s\}.
	\]
	After shrinking the representatives if necessary, we have 
	$
		\mathcal S_{\boldsymbol\gamma}
		\simeq
		\Delta_t\times\Delta_s.
	$
	In these coordinates, the quotient map and the real structure induced by
	\Cref{eq:canonical-source-real-structure} are given by
	\[
		q_{\boldsymbol\gamma}(u,v,s)=(u+v,s),
		\quad \text{ and }
		\sigma_{\boldsymbol\gamma}(t,s)
		=
		(\overline t,\overline s),
	\]
	respectively. In particular, every fiber
	$\mathcal S_{\boldsymbol\gamma,s}$ is homeomorphic to a disk. By
	\Cref{eq:trace-map,eq:quotient-trace-map}, we have
	\[
		\Phi_{\boldsymbol\gamma}(u+v,s)
		=
		\gamma(u)+\gamma(v),
		\quad\enspace (u,v,s)\in\mathcal A.
	\]
	At $s = 0$, taking $(u,v)=(t,0)$ gives
	\[
		\Phi_{\boldsymbol\gamma}(t,0)=\gamma(t).
	\]
	This means that the restriction of $\Phi_{\boldsymbol\gamma}$ to the 
	central fiber of
	$\mathcal S_{\boldsymbol\gamma}$ is the chosen normalization map of $B$.

	Let $0<s=\rho^2\ll1$.  On the circle fixed by the real structure (recall
	\Cref{eq:positive-real-source-circle}), we have
	\[
		u=\rho e^{i\theta},
		\qquad
		v=\rho e^{-i\theta},
		\qquad
		t=u+v=2\rho\cos\theta.
	\]
	Hence \Cref{eq:quotient-distinguished-real-set} gives
	\[
		\mathcal R_{\boldsymbol\gamma,s}
		=
		[-2\rho,2\rho]\times\{s\}.
	\]
		This interval is properly contained in the locus of the
		corresponding fiber of $\mathcal S_{\boldsymbol\gamma}$ fixed by the real
		structure $\sigma_{\boldsymbol\gamma}$, namely
	\[
		\operatorname{Fix}
		\bigl(
			\sigma_{\boldsymbol\gamma}
			|_{\mathcal S_{\boldsymbol\gamma,s}}
		\bigr)
		=
		(\Delta_t\cap\R)\times\{s\}.
	\]
	By \Cref{eq:trace-equivariance-down}, $\Phi_{\boldsymbol\gamma}$ maps
	this whole locus fixed by the real structure into $X_\R$. We renormalize by 
	setting
	\[
		r=2\rho,
		\qquad
		s=\frac{r^2}{4}.
	\]
		The  interval above  is then $[-r,r]$.  For $0 < r \ll 1$, use
		$t$ as coordinate on the locus of the corresponding fiber
		$\mathcal S_{\boldsymbol\gamma, r^2/4}$ of $\mathcal S_{\boldsymbol\gamma}$
		fixed by the real structure
		$\sigma_{\boldsymbol\gamma}$, and define
	\begin{equation}\label{eq:real-branch-interval-map}
	\begin{aligned}
		\alpharb{r}:
		\operatorname{Fix}
		\bigl(
			\sigma_{\boldsymbol\gamma}
			|_{\mathcal S_{\boldsymbol\gamma,r^2/4}}
		\bigr)
		&\longrightarrow X_\R,\\
		t&\longmapsto
		\Phi_{\boldsymbol\gamma}
		\left(t,\frac{r^2}{4}\right).
	\end{aligned}
	\end{equation}

	We next express the quotient trace map in invariant coordinates using
	Dickson polynomials.  For a parametrization in real Newton--Puiseux form,
	this will lead to the desired predivide. Write
	\[
		x_\gamma(w)=\sum_{n\ge1}\alpha_nw^n,
		\qquad
		y_\gamma(w)=\sum_{n\ge1}\beta_nw^n,
		\qquad
		\alpha_n,\beta_n\in\R.
	\]
	Then, for computating 
	$\Phi_{\boldsymbol\gamma}$ we just need to express the power sums $u^n+v^n$ 
	in
	terms of $t=u+v$ and $s=uv$. Define the Dickson polynomials
	$D_n(t,s)$ of the first kind recursively
	by
	\[
		D_0=2,
		\qquad
		D_1=t,
		\qquad
			D_{n+1}=tD_n-sD_{n-1},
			\qquad n\ge1.
	\]
	The power sums have the same initial values and satisfy the same
	recurrence, since
	\[
		u^{n+1}+v^{n+1}
		=
		(u+v)(u^n+v^n)-uv(u^{n-1}+v^{n-1}).
	\]
By induction, we get
	\begin{equation}\label{eq:dickson-defining-property}
			D_n(u+v,uv)=u^n+v^n,
			\qquad n\ge0.
	\end{equation}
	See \cite[Chapter 1]{LidlMullenTurnwald1993Dickson} for further
	properties of these polynomials. Applying
	\Cref{eq:dickson-defining-property} term by term gives
	\begin{equation}\label{eq:real-branch-trace-power-series}
		\Phi_{\boldsymbol\gamma}(t,s)
		=
		\left(
			\sum_{n\ge1}\alpha_nD_n(t,s),
			\sum_{n\ge1}\beta_nD_n(t,s)
		\right).
	\end{equation}
	On the fiber over $s=r^2/4$, we can look at the Dickson polynomials as
	{\em rescaled} Chebyshev polynomials.  Let $T_n$ denote the Chebyshev
	polynomial of the first kind. These are
	characterized by
	\[
		T_n(\cos\theta)=\cos(n\theta).
	\]

	\begin{proposition}
		\label{prop:chebyshev-normalization}
		For every $n\ge0$ and every $r>0$, the identity
		\begin{equation}\label{eq:chebyshev-trace}
			D_n\left(t,\frac{r^2}{4}\right)
			=
			2^{1-n}r^nT_n(t/r).
		\end{equation}
		holds in $\R[t]$.
		\end{proposition}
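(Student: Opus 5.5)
The plan is to prove \Cref{eq:chebyshev-trace} by induction on $n$, comparing recurrences. Both sides are polynomials in $t$ with real coefficients, so it suffices to check that they have the same initial values and satisfy the same three-term recurrence in $\R[t]$.

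Put $E_n(t):=2^{1-n}r^nT_n(t/r)$. The Chebyshev polynomials satisfy $T_0=1$, $T_1(x)=x$ and $T_{n+1}(x)=2xT_n(x)-T_{n-1}(x)$. This follows from $\cos((n+1)\theta)+\cos((n-1)\theta)=2\cos\theta\cos(n\theta)$, together with the fact that a polynomial identity holding on $[-1,1]$ holds identically. For the initial values, $E_0=2\cdot1=2=D_0$ and $E_1=r\cdot(t/r)=t=D_1$.

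For the recurrence, we need $E_{n+1}=tE_n-\tfrac{r^2}{4}E_{n-1}$ for $n\ge1$. Multiply the Chebyshev recurrence at $x=t/r$ by $2^{-n}r^{n+1}$:
\[
2^{-n}r^{n+1}T_{n+1}(t/r)=2^{-n}r^{n+1}\cdot 2\tfrac{t}{r}T_n(t/r)-2^{-n}r^{n+1}T_{n-1}(t/r).
\]
The left-hand side is $E_{n+1}$. The first term on the right is $t\cdot 2^{1-n}r^nT_n(t/r)=tE_n$. The second term is $2^{-n}r^{n+1}T_{n-1}(t/r)=\tfrac{r^2}{4}\,2^{2-n}r^{n-1}T_{n-1}(t/r)=\tfrac{r^2}{4}E_{n-1}$.

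So $E_n$ satisfies the defining recurrence of $D_n(t,r^2/4)$ with $s=r^2/4$, and induction gives $D_n(t,r^2/4)=E_n(t)$ for all $n\ge0$.

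Alternatively, one can use \Cref{eq:dickson-defining-property} directly. With $u=\tfrac r2e^{i\theta}$ and $v=\tfrac r2e^{-i\theta}$ we get $uv=r^2/4$, $t=r\cos\theta$, and $u^n+v^n=2^{1-n}r^n\cos(n\theta)=2^{1-n}r^nT_n(t/r)$. This shows the identity for $t\in[-r,r]$, and since both sides are polynomials in $t$ it holds in $\R[t]$.

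The only point needing care is the bookkeeping of the powers of $2$ and $r$ in the recurrence step (and extending from an interval to a polynomial identity in the second argument); there is no substantive obstacle.
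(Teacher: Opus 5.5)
Your proof is correct, but your main argument takes a different route from the paper. The paper's proof is exactly your ``alternative'' paragraph. It substitutes $u=\tfrac r2e^{i\theta}$, $v=\tfrac r2e^{-i\theta}$ into \eqref{eq:dickson-defining-property}, obtains the identity for $t=r\cos\theta\in[-r,r]$, and then extends it because both sides are polynomials in $t$.

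Your main argument instead checks that $E_n(t)=2^{1-n}r^nT_n(t/r)$ has the initial values $2$ and $t$ and satisfies $E_{n+1}=tE_n-\tfrac{r^2}{4}E_{n-1}$. Your bookkeeping of the factor $2^{-n}r^{n+1}$ is right. This route stays entirely inside $\R[t]$; the interval-to-polynomial extension is used only once, when you derive the Chebyshev recurrence. In effect it encodes the weighted homogeneity of $D_n$ ($t$ of weight $1$, $s$ of weight $2$) together with $D_n(2x,1)=2T_n(x)$.

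The paper's route is shorter once \eqref{eq:dickson-defining-property} is available, and it suits the purpose of that section better. It shows that Chebyshev polynomials appear because $t=u+v$ maps the fixed circle $\widehat{\mathcal R}_{r^2/4}$ of the source real structure onto $[-r,r]$ via $t=r\cos\theta$. This is how the trace map recovers Gusein-Zade's construction.
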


		\begin{proof}
			Fix $r>0$.  For $\theta\in\R$, set
			\[
				u=\frac r2e^{i\theta},
				\qquad
				v=\frac r2e^{-i\theta}.
			\]
			Then $uv=r^2/4$ and $u+v=r\cos\theta$.  By
			\Cref{eq:dickson-defining-property},
			\[
				D_n\left(r\cos\theta,\frac{r^2}{4}\right)
				=
				u^n+v^n
				=
				2\left(\frac r2\right)^n\cos(n\theta)
				=
				2^{1-n}r^nT_n(\cos\theta).
			\]
			Equation~\eqref{eq:chebyshev-trace} therefore holds for every
			$t=r\cos\theta\in[-r,r]$. And since this is an identity between 
			polynomials in $t$ that holds on an interval, it must hold for all 
			$t$.
		\end{proof}

		\begin{corollary}
			\label{cor:real-branch-trace-predivide}
			Assume that the chosen parametrization for the orbit consisting of a real
			branch is in real Newton--Puiseux form
			$\gamma(w)=(w^m,\sum_{j>m}a_jw^j)$.  With $s=r^2/4$ as in
			\Cref{eq:real-branch-interval-map}, reparametrize the interval
			$[-r,r]$ by $t=rx$.  Then
			\[
				\alpharb{r}(rx)
				=
				\left(
				2^{1-m}r^mT_m(x),\,
				\sum_{j>m}a_j2^{1-j}r^jT_j(x)
				\right).
			\]
			Let $\mathbb B'$ be any  Milnor ball for $B$
			invariant under conjugation, and put
			$\mathbb B'_\R:=\mathbb B'\cap X_\R$. Then, there is $r_0>0$ such 
			that, 
			for
			every $0<r<r_0$, if $I_r^\circ$ is the component of
			$\{t \in \Delta_t\cap\R: \alpharb{r}(t) \in \mathbb B'_\R\}$  
			containing
			$[-r,r]$, and $I_r$
			is its closure, then
			\[
				\alpharb{r}|_{I_r}:I_r\longrightarrow\overline{\mathbb B'}_\R
			\]
			parametrizes a predivide.  Denote its image by
			$\Gamma_{\boldsymbol\gamma,r}^{\mathrm{rb},\overline{\mathbb B'}_\R}
			:=\alpharb{r}(I_r)$.  Then
			\[
				\dbl(
				\Gamma_{\boldsymbol\gamma,r}^{\mathrm{rb},\overline{\mathbb B'}_\R})
				=
				\delta(B,0).
			\]
		\end{corollary}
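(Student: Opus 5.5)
The formula is immediate from \Cref{eq:real-branch-trace-power-series} and \Cref{prop:chebyshev-normalization}. With $x_\gamma(w)=w^m$ and $y_\gamma(w)=\sum_{j>m}a_jw^j$, evaluating at $s=r^2/4$ and $t=rx$ turns each $D_n(rx,r^2/4)$ into $2^{1-n}r^nT_n(x)$. The substance is therefore the predivide property and the count $\dbl=\delta(B,0)$.

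\textbf{Rescaling.} I would rescale the target by $(X,Y)\mapsto(X/r^m,Y/r^{m+1})$, or rather by the weight of the first characteristic exponent. The rescaled curve on $[-1,1]$ then converges, in $C^1$ on compacts, to a Chebyshev curve $(2^{1-m}T_m(x),\ \ldots)$. This is not enough on its own, because the characteristic exponents produce intersections at all scales. So I would instead argue via the source geometry. For $t\in[-r,r]$ the point $\alpharb{r}(t)$ equals $\gamma(u)+\gamma(\bar u)$ with $|u|=r/2$. For $t$ outside $[-r,r]$ on the real axis, $u$ and $v$ are both real with $uv=r^2/4$, so one of them is tiny. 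Hence $\alpharb{r}$ is a small perturbation of the branch parametrization $\gamma$, which is injective near the boundary. This gives transversality to $\partial\mathbb B'$, the endpoint conditions, and the absence of double points outside a neighbourhood of size $O(r)$ of $[-r,r]$, using conic structure and Milnor transversality.

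\textbf{Immersion and ordinary multiple points.} $\alpharb{r}$ restricted to $\operatorname{Fix}$ is the restriction of the holomorphic map $\Phi_{\boldsymbol\gamma}(\cdot,r^2/4)$ to the real line. It is an immersion because $dD_m/dt$ and the next terms cannot vanish simultaneously. At the endpoints $t=\pm r$ the $x$-derivative $T_m'$ is nonzero, and at interior zeros of $T_m'$ a later term with Chebyshev factor $T_j'$ is nonzero; this needs the primitivity ($\gcd$ of exponents equal to $1$). The condition that tangents at multiple points be pairwise distinct would follow by a genericity argument in $r$. Alternatively, one can note that the count is stable: a finite map from a disk whose image curve has $\delta$ equal to $\dbl$ forces all coincidences to be nodes or ordinary.

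\textbf{The count, and the main obstacle.} I would argue complex-analytically. The complex curve $\Phi_{\boldsymbol\gamma}(\Delta_t\times\{s\})$ is a $\delta$-constant-type deformation: its normalization is a disk, and it is a flat deformation of $B$. Hence the sum of its local $\delta$ invariants is at most $\delta(B,0)$, by upper semicontinuity and the Milnor-fiber Euler characteristic. So the image has at most $\delta(B,0)$ double-point pairs counted with $\delta$. For the reverse inequality I would count real double points explicitly, inductively over the Puiseux characteristic. This is Gusein-Zade's argument: the Chebyshev term $T_m$ folds $[-1,1]$ into $m$ laps, and each subsequent characteristic term separates the laps, producing exactly the intersection count $\binom{e_{k-1}/e_k}{2}\cdot$(weights). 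Summing these gives $\delta$ via the formula $2\delta=\sum(\beta_k-1)(e_{k-1}-e_k)$. Making this real count rigorous at all scales simultaneously, with transversality, is the main obstacle. I would first try to cite Gusein-Zade \cite[Theorem 4]{GuseinZade1974DynkinDiagrams} for the real count. Combined with the complex upper bound, this gives that all complex singular points are real transverse crossings and that $\dbl=\delta(B,0)$.
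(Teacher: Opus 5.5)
Your proposal is correct in outline. Like the paper, it hands the one hard input, the real count of coincidences on $[-r,r]$, to Gusein-Zade's Chebyshev analysis, and it obtains the displayed formula exactly as the paper does. The difference is how much you take from that citation. The paper imports the whole conclusion (immersion, ordinary multiple points, $\dbl=\delta(B,0)$) and only adds that the endpoints of $I_r$ land on $\partial\overline{\mathbb B'}_\R$. You import only the lower bound $\dbl\ge\delta(B,0)$ and then squeeze:
\begin{itemize}
\item Put $s=r^2/4$ and let $Y_s$ be the image of the complex disk $\{t:\Phi_{\boldsymbol\gamma}(t,s)\in\mathbb B'\}$. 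It has that disk as normalization.
\item Since $\Phi_{\boldsymbol\gamma}(t,s)=\gamma(t)+O(s)$ in $C^1$ on compacts, $Y_s$ is a small deformation of $B$ transverse to $\partial\mathbb B'$.
\item The Euler characteristic of the Milnor fiber then gives $\sum_q\delta(Y_s,q)=\delta(B,0)$.
\item A point with $k$ preimages in $I_r$ contributes at least $\binom k2$ to this sum, with equality only when its local branches are smooth and pairwise transverse and there are no other branches.
\end{itemize}
This is the same squeeze the paper uses at the end of the proof of \Cref{thm:global-predivide}. It gives you ordinary multiple points, complex immersivity and the absence of stray coincidences for free. So from Gusein-Zade you need only a count, not his transversality statement. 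The price is justifying that $Y_s$ is a small deformation of $B$ in $\mathbb B'$, and your near-boundary estimate does this.

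Two of your auxiliary arguments should be dropped or repaired.
\begin{itemize}
\item A genericity argument in $r$ cannot give the claim for \emph{every} $0<r<r_0$; use the squeeze instead.
\item The claim ``$\alpharb{r}$ is a small perturbation of $\gamma$'' does not by itself exclude coincidences for $r\ll|t|\ll1$ when $m$ is even. Let $\beta$ be the first odd exponent with $a_\beta\ne0$. The perturbation $\gamma(v)$, with $v\approx r^2/(4t)$, has size about $(r^2/|t|)^m$. The half-branches $t\gtrless0$ are separated by about $|t|^{\beta}$. The first is not small compared with the second when $r\ll|t|\lesssim r^{2m/(m+\beta)}$.
\end{itemize}
The exact first coordinate $2^{1-m}r^mT_m(x)$ settles the second point directly:
\begin{itemize}
\item $|T_m(x)|>1$ for $|x|>1$, while $|T_m|\le1$ on $[-1,1]$.
\item $T_m$ is monotone on each of $x>1$ and $x<-1$.
\item For $m$ even, $\alpharb{r}(t)-\alpharb{r}(-t)=\bigl(0,2\psi(u)+2\psi(v)\bigr)$, where $\psi$ is the odd part of $\sum_{j>m}a_jw^j$. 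For $|t|>r$ the numbers $u,v$ are real of the same sign. Injectivity of $\gamma$ forces $\psi$ to have no zeros on each side of $0$, so this difference is nonzero.
\end{itemize}
Hence no coincidence involves $|t|>r$ at all; the squeeze would also rule such coincidences out.
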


			\begin{proof}
				The displayed formula for $\alpharb{r}$ follows by combining
					\Cref{eq:real-branch-trace-power-series} with
					\Cref{prop:chebyshev-normalization}.  The statement about 
					the
					predivide and the equality for $\dbl$ follow from Gusein-Zade's
					construction using Chebyshev polynomials for a real branch in
					Newton--Puiseux form
					\cite[Section 5]{GuseinZade1974DynkinDiagrams}.
					See also \cite[Section 5.3]{ACampoPortilla2025Divides}.
					The endpoints of $I_r$ are sent to 
					$\partial\overline{\mathbb B'}_\R$, and no
					interior point does.  Thus $\alpharb{r}|_{I_r}$ parametrizes a
					predivide in the closed disk $\overline{\mathbb B'}_\R$.

				\end{proof}

	\begin{remark}
		The construction above using Chebyshev polynomials is not used in proof 
		of the main theorem in
		\Cref{sec:global-predivide}.  Its role is to identify the specialization of
		the trace map to a real branch and show that our construction
		generalizes that of Gusein-Zade in
		\cite{GuseinZade1974IntersectionMatrices} and makes Chebyshev
		polynomials appear naturally.  In the final construction, real branches
		are
		treated by A'Campo's method of translations and contractions.
	\end{remark}

	\section{The case of a conjugate pair with distinct tangents}
	\label{sec:conjugate-pair-case}

	We apply the construction of \Cref{sec:trace-map} to an orbit consisting
	of a conjugate pair
	\[
		\mathfrak o=\{Q,\overline Q\},
	\]
	and assume that $Q$ and $\overline Q$ have distinct nonreal tangent
	lines (recall \Cref{def:real-tangent}).  Put $m=\mt(Q,0)$, and let
	$L = T_0Q$.  Since the tangent lines are distinct,
	$T_0\overline Q=\sigma_X(L)\ne L$.  Choose a complex linear form $q$
	whose kernel is $L$, and define
	\begin{equation}\label{eq:distinct-conjugate-adapted-p-coordinate}
		p(z)=\overline{q(\sigma_X(z))}.
	\end{equation}
	Then $\ker p=\sigma_X(L)$.  Hence $p$ and $q$ are linearly
	independent, and $(p,q)$ is a complex linear coordinate system.  With
	respect to this coordinate system, the real structure $\sigma_X$ is
	given by
	\begin{equation}\label{eq:distinct-conjugate-real-structure}
		(p(z),q(z))\longmapsto(\overline{q(z)},\overline{p(z)}), \quad
	 z\in \C^2
	\end{equation}
	and the real plane is the locus $\{q=\overline p\}$.  Since
	$p|_L\not\equiv0$, the order of $p$ along $Q$ is $m$.  After changing
	a primitive parameter, we may therefore write
	\begin{equation}\label{eq:distinct-conjugate-np-parametrization}
		\gamma_Q(u)=(u^m,\varphi(u)),
		\qquad
		\varphi(u)=\sum_{k>m}a_ku^k.
	\end{equation}
	Since $\ker q=T_0Q$, the series
	$\varphi=q\circ\gamma_Q$ has order greater than $m$.  For $\overline Q$, we 
	use the conjugate parametrization
	\begin{equation}\label{eq:distinct-conjugate-conjugate-parametrization}
		\gamma_{\overline Q}(v)
		=
		\sigma_X\bigl(\gamma_Q(\overline v)\bigr)
		=
		(\overline\varphi(v),v^m),
		\qquad
		\overline\varphi(v)=\sum_{k>m}\overline{a_k}v^k.
	\end{equation}
	Let $\gamma_1 = \gamma_Q$, let $\gamma_2=\gamma_{\overline Q}$, and put
	$\boldsymbol\gamma=(\gamma_1,\gamma_2)$.
	Since $G=\{1\}$ by \Cref{eq:orbit-automorphism-group},
	\Cref{eq:orbit-quotient-source,eq:quotient-distinguished-real-set} we have 
	that
	\[
		\mathcal S_{\boldsymbol\gamma}=\mathcal A
		\quad \text{ and }\quad
		\mathcal R_{\boldsymbol\gamma,s}=\widehat{\mathcal R}_s.
	\]
	For $s>0$, this is the fixed locus of the annulus $\mathcal A_s$ by
	\Cref{eq:positive-real-source-circle}.  Unlike the case of a real branch,
	$\mathcal R_{\boldsymbol\gamma,s}$ remains a circle. Here we take
	$s=r^2>0$.  By \Cref{eq:trace-distinguished-real-image}, the restriction
	of the trace map to this fixed circle takes values in $X_\R$
	and defines
	\begin{equation}\label{eq:conjugate-pair-circle-map}
	\begin{aligned}
		\alphacp{r}:\R/2\pi\Z&\longrightarrow X_\R,\\
		\theta&\longmapsto
		\gamma_Q(re^{i\theta})
		+
		\gamma_{\overline Q}(re^{-i\theta}).
	\end{aligned}
	\end{equation}
		We denote its image by
		\begin{equation}\label{eq:predivide_conjugate}
			\Gamma^{\mathrm{cp}}_{\boldsymbol\gamma,r}
			:=
			\alphacp{r}(\R/2\pi\Z).
		\end{equation}

	The construction of \Cref{sec:trace-map} allows, a priori, arbitrary
	primitive parametrizations, but changing them changes $\boldsymbol\gamma$
	and
	$\Phi_{\boldsymbol\gamma}$.  It may also change the number of pairs of
	distinct parameters with the same image, as
	\Cref{ex:adapted-source-coordinate-needed} shows.
	Substituting the adapted parametrizations
	\Cref{eq:distinct-conjugate-np-parametrization,eq:distinct-conjugate-conjugate-parametrization}
	into \Cref{eq:conjugate-pair-circle-map} gives
	\begin{equation}\label{eq:conjugate-circle-map-coordinates}
	\alphacp{r}(\theta)
	=
	\left(
		r^me^{im\theta}
		+
		\sum_{n>m}\overline{a_n}r^ne^{-in\theta},
		\,
		r^me^{-im\theta}
		+
		\sum_{n>m}a_nr^ne^{in\theta}
	\right).
	\end{equation}

		\begin{theorem}
			\label{thm:distinct-tangent-conjugate-pair-predivide}
			The expression for
			$\alphacp{r}(\theta)$ depends real analytically on $(r, \theta)$.
			Also, there are $r_0>0$ and a closed real disk
			$D \subset \mathbb B_\R$, with $0\in\operatorname{int}D$, such
				that, for $0<r<r_0$, the image
				$\Gamma^{\mathrm{cp}}_{\boldsymbol\gamma,r}$ defined in
				\Cref{eq:predivide_conjugate} is contained in
			$\operatorname{int}D$ and is a predivide in $D$ in the sense of
			\Cref{def:predivide}.  Moreover,
			\[
				\dbl(\Gamma^{\mathrm{cp}}_{\boldsymbol\gamma,r})
				=
			2\delta(Q,0)+(Q\cdot\overline Q)_0-1
			=
			\delta(Q\cup\overline Q,0)-1.
		\]
	\end{theorem}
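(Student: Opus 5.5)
The plan is to work throughout in the reduced form $z_r$ from the introduction, $\alphacp{r}=L_r\circ z_r$ with
\[
z_r(\theta)=e^{im\theta}+\sum_{n>m}\overline{a_n}\,r^{n-m}e^{-in\theta},
\]
and to combine a complex counting argument on the annulus $\mathcal A_{r^2}$ with real-analytic perturbation estimates.

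\emph{Real analyticity, containment and immersion.} Real analyticity in $(r,\theta)$ is immediate from \Cref{eq:conjugate-circle-map-coordinates}, since $\varphi$ converges on $\Delta$. For containment I take $D$ to be a fixed small closed disk around $0$ in $\mathbb B_\R$; then $|\alphacp{r}|=O(r^m)$ uniformly in $\theta$, so the image lies in $\operatorname{int}D$ for small $r$. Because $J=\R/2\pi\Z$ has empty boundary, the boundary conditions of \Cref{def:predivide} hold vacuously. For the immersion property, note that $z_r'(\theta)=ime^{im\theta}+O(r)$ uniformly in $\theta$, so $z_r'$ never vanishes for $r$ small. Since $L_r$ is a real linear isomorphism, $\alphacp{r}$ is an immersion.

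\emph{Upper bound.} I view $\Phi_r:=\widehat\Phi_{\boldsymbol\gamma}|_{\mathcal A_{r^2}}$ as a holomorphic map from an annulus whose image $C_r$ is a fiber of a flat deformation of $Q\cup\overline Q$, namely the image family of the smoothing $\mathcal A$. The count will use two quantities.
\begin{itemize}
\item The source $\mathcal A_{r^2}$ is connected, whereas $Q\cup\overline Q$ has two branches.
\item For a deformation whose nearby fibers have normalization with $k$ connected components, semicontinuity of $\delta$ gives $\sum_{p\in C_r}\delta(C_r,p)\le\delta(C,0)-(\#\text{branches}-k)$.
\end{itemize}
Equivalently, one can compare Euler characteristics with the Milnor fiber, for which $\chi=2-2\delta(Q\cup\overline Q,0)$. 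Either way, the number of unordered pairs of distinct points of $\mathcal A_{r^2}$ with the same image is at most $\delta(Q\cup\overline Q,0)-1$, counted with multiplicity, and with equality forcing all such identifications to be transverse nodes. Restricting to the fixed circle $\widehat{\mathcal R}_{r^2}$ then gives $\dbl(\Gamma^{\mathrm{cp}}_{\boldsymbol\gamma,r})\le\delta(Q\cup\overline Q,0)-1$. The second equality in the statement is just \Cref{eq:delta-branch-formula}, together with $\delta(\overline Q)=\delta(Q)$.

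\emph{Lower bound (main obstacle).} The hard part is to show that every pair identified by $\Phi_r$ lies on the real circle, and that the multiple points there are ordinary. The curve $z_r$ is a perturbation of the circle traversed $m$ times. I would follow the Puiseux characteristic sequence $m=e_0>e_1>\dots>e_g=1$, in the spirit of Gusein-Zade's Chebyshev count: at each characteristic exponent $\beta_k$, the term $\overline{a_{\beta_k}}r^{\beta_k-m}e^{-i\beta_k\theta}$ splits the $e_{k-1}$-fold covered curve into $e_{k-1}/e_k$ sheets. Rescaling by the appropriate power of $r$ and looking at differences $z_r(\theta)-z_r(\theta+2\pi j/e_{k-1})$, these sheets should cross transversally at explicit angles. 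Summing these crossings should yield $m(m-1)$ from the leading term and $\sum_k(\beta_k-\beta_{k-1}')$-type contributions, which I would then match against $2\delta(Q)+(Q\cdot\overline Q)_0-1$ with $(Q\cdot\overline Q)_0=m^2$, since the tangents are distinct.

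If this direct count becomes too delicate, my fallback is the following. Suppose $\Phi_r(u,v)=\Phi_r(u',v')$ with $uv=u'v'=r^2$. Comparing $p$-coordinates gives $u^m-u'^m=O(r^{m+1})$, and similarly for $v$, which forces $|u|$ to be within $O(r^2)$ of $r$. An iterative refinement over Puiseux levels would then show that the pair is fixed by $\sigma_{\mathcal A}$ up to a unique correction, and a symmetry argument (conjugate pairs come in pairs, so a nonreal pair would have to be counted twice) would exclude nonreal identifications.

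Combining the upper and lower bounds gives equality. Equality in the semicontinuity bound forces all identifications to be transverse. Transversality then yields pairwise distinct tangents at each multiple point, which completes the predivide conditions.
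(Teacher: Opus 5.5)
Your treatment of real analyticity, containment in $D$ and the immersion property is fine. Your upper bound is also a correct argument, and it differs from the paper's. The image of the annulus $\mathcal A_{r^2}$ is a fiber of a deformation of $Q\cup\overline Q$ with connected normalization. Comparing Euler characteristics with the Milnor fiber therefore gives $\sum_p\delta_p=\delta(Q\cup\overline Q,0)-1$, hence at most that many pairs on the real circle.

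\textbf{The gap is the lower bound.} This is the actual content of the theorem, and you leave it as a heuristic. The missing idea is a localization and rescaling, organized by roots of unity.
\begin{enumerate}
\item Every off-diagonal solution of $z_r(\theta)=z_r(\eta)$ has $\eta-\theta$ close to $2\pi j/m$ for some $1\le j\le m-1$. The case $j=0$ is excluded by uniform local injectivity.
\item For $\zeta=e^{2\pi ij/m}$, write $\eta=\theta+2\pi j/m+r^{\kappa(\zeta)-m}x$ with $x$ real and bounded. The rescaled difference converges in $C^1$ to $G_\zeta(\theta,x)=-imxe^{im\theta}+c_\zeta e^{-i\kappa(\zeta)\theta}$, where $c_\zeta=\overline{a_{\kappa(\zeta)}}(1-\zeta^{-\kappa(\zeta)})\ne0$.
\item Because $x$ is real, the zeros of $G_\zeta$ are the $2(m+\kappa(\zeta))$ values of $\theta$ at which $c_\zeta e^{-i(m+\kappa(\zeta))\theta}/(im)$ is real. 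The $m$ here comes from the opposite windings of $e^{im\theta}$ and of the conjugated terms $e^{-in\theta}$.
\item These zeros are nondegenerate. The implicit function theorem then gives exactly $2(m+\kappa(\zeta))$ ordered solutions in that sector, all transverse.
\item Summing requires $\sum_{\zeta\ne1}\kappa(\zeta)=\sum_j(d_{j-1}-d_j)\beta_j=2\delta(Q,0)+m-1$. Your guessed ``$\sum_k(\beta_k-\beta'_{k-1})$-type'' contributions do not reproduce this.
\end{enumerate}
This count gives equality and transversality at once, so the complex upper bound is not actually needed.

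\textbf{Your fallback does not close the gap.}
\begin{itemize}
\item The estimate $u^m-u'^m=O(r^{m+1})$ presupposes $|v|,|v'|=O(r)$. On $\mathcal A_{r^2}$ one may have $|u|$ of order $r^2$ and $|v|$ of order one, so this is precisely what would have to be proved.
\item Even granting that estimate, it only yields $u'\approx\zeta u$ for an $m$-th root of unity $\zeta$, not $|u|\approx r$.
\item Conjugation symmetry only shows that nonreal identifications come in conjugate pairs, which is no contradiction.
\item Suppose every identification on the annulus were shown to lie on the real circle. You would only get that the real count equals the complex count, which is $\le\delta(Q\cup\overline Q,0)-1$. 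Reaching equality requires every singular point of the image to be an ordinary multiple point. Your last paragraph deduces that property from equality, so this route is circular.
\end{itemize}
A minor point: equality $\sum_p\binom{k_p}{2}=\sum_p\delta_p$ forces ordinary multiple points, not necessarily nodes.
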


	We first prove some preliminary results needed in the proof of
	\Cref{thm:distinct-tangent-conjugate-pair-predivide}.  After factoring
	$r^m$ in \Cref{eq:conjugate-circle-map-coordinates}, the leading map is
	$\theta\mapsto e^{im\theta}$.  For two distinct parameters $\theta$ and
	$\eta$, this leading map takes the same value precisely when
		$e^{i(\eta-\theta)}$ is a solution $\zeta\ne1$ of $\zeta^m=1$.  Let
		\[
			\beta_1<\cdots<\beta_g
		\]
		be the characteristic exponents of the Newton--Puiseux series
		$\varphi$ in
		\Cref{eq:distinct-conjugate-np-parametrization}.  Put
		\begin{equation}\label{eq:characteristic-gcd-sequence}
			d_0=m,
			\qquad
			d_j=\gcd(m,\beta_1,\ldots,\beta_j),
			\qquad 1\le j\le g.
		\end{equation}
		So $\beta_j$ is the smallest exponent $n$ with
		$a_n\ne0$ that is not divisible by $d_{j-1}$.  Since the
		parametrization is primitive,
		$d_g = 1$. Note that $g=0$ only in the smooth case when $m=1$.  For each
		solution  of
		$\zeta^m=1$ different from $1$, define
		\begin{equation}\label{eq:kappa_order}
		\kappa(\zeta)
		=
		\ord_u\bigl(\varphi(u)-\varphi(\zeta u)\bigr).
	\end{equation}
	The order above in \cref{eq:kappa_order} has to be finite. Indeed, if it 
	was not finite, then
	$\gamma_Q(u)=\gamma_Q(\zeta u)$, which contradicts the primitivity of the
	parametrization.

\begin{lemma}
	\label{lem:root-of-unity-separation-sum}
		Let $Q$ be a complex plane branch equipped with a Newton--Puiseux 
		parametrization
		\Cref{eq:distinct-conjugate-np-parametrization}. Define
		$\beta_j$, $d_j$, and $\kappa(\zeta)$ as above.
		For $1\le j\le g$, if $\zeta^{d_{j-1}} = 1$ and
		$\zeta^{d_j}\ne1$, then $\kappa(\zeta)=\beta_j$.  Moreover,
		\[
		\sum_{\substack{\zeta^m=1\\ \zeta\ne1}}\kappa(\zeta)
	=
	\sum_{j=1}^g(d_{j-1}-d_j)\beta_j
	=
	2\delta(Q,0)+m-1.
	\]
\end{lemma}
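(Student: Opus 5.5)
The plan is to compute $\kappa(\zeta)$ directly from the Puiseux expansion, count roots of unity by level, and then identify the resulting sum with $2\delta(Q,0)+m-1$ through Milnor's formula.

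\emph{Computing $\kappa(\zeta)$.} Write
\[
\varphi(u)-\varphi(\zeta u)=\sum_{n>m}a_n(1-\zeta^n)u^n .
\]
The coefficient of $u^n$ vanishes exactly when $a_n=0$ or $\zeta^n=1$. Fix $j$ with $\zeta^{d_{j-1}}=1$ and $\zeta^{d_j}\ne1$.

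\begin{itemize}
\item For $n<\beta_j$ with $a_n\neq 0$, the definition of the characteristic exponents gives $d_{j-1}\mid n$, so $\zeta^n=1$ and the term drops out.
\item For $n=\beta_j$ we have $a_{\beta_j}\neq0$. Since $d_j=\gcd(d_{j-1},\beta_j)$ and $\zeta^{d_{j-1}}=1$, the condition $\zeta^{\beta_j}=1$ would force $\zeta^{d_j}=1$, which is false. Hence $1-\zeta^{\beta_j}\ne0$.
\end{itemize}

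Together these give $\kappa(\zeta)=\beta_j$.

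\emph{Counting by level.} Every $\zeta\ne1$ with $\zeta^m=1$ falls into exactly one of these classes. The sets $\mu_{d_j}$ of $d_j$-th roots of unity form a decreasing chain $\mu_m=\mu_{d_0}\supset\mu_{d_1}\supset\cdots\supset\mu_{d_g}=\{1\}$. So $\zeta$ lies in $\mu_{d_{j-1}}\setminus\mu_{d_j}$ for a unique $j$, and this set has $d_{j-1}-d_j$ elements. Summing gives the first equality,
\[
\sum_{\zeta\neq 1}\kappa(\zeta)=\sum_{j=1}^g(d_{j-1}-d_j)\beta_j .
\]

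\emph{Identifying the sum with $2\delta+m-1$.} The main step is the classical formula
\[
\sum_{j}(d_{j-1}-d_j)\beta_j=\mu(Q)+m-1,
\]
combined with Milnor's formula $\mu=2\delta$ for a branch. I would justify the first of these using the standard identity (Zariski, or \cite[Wall]{Wall2004SingularPoints}) that
\[
\mu=\sum_{j=1}^g(\beta_j-1)(d_{j-1}-d_j).
\]
Expanding, and using $\sum_j(d_{j-1}-d_j)=m-1$ by telescoping, gives
\[
\sum_j(d_{j-1}-d_j)\beta_j=\mu+m-1 .
\]

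\emph{A self-contained alternative for this step.} One can avoid quoting the Milnor-number formula. Let $f(x,y)=\prod_{\zeta^m=1}\bigl(y-\varphi(\zeta x^{1/m})\bigr)$ be the equation of $Q$. Then $\ord_u f_y(u^m,\varphi(u))=\sum_{\zeta\ne1}\kappa(\zeta)$. Teissier's lemma gives $(Q\cdot\{f_y=0\})_0=\mu+(Q\cdot\{x=0\})_0-1=\mu+m-1$, and $\mu=2\delta$ for an irreducible germ.

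The only delicate point in either route is the bookkeeping that the polar intersection multiplicity equals $\mu+m-1$. Everything else is a direct computation.
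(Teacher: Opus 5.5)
Your proposal is correct and follows essentially the same route as the paper. The computation of $\kappa(\zeta)$, the count of $d_{j-1}-d_j$ roots of order dividing $d_{j-1}$ but not $d_j$, and the final telescoping $\sum_j(d_{j-1}-d_j)=m-1$ all match the paper. The only difference is the citation: the paper quotes Wall's conductor formula $2\delta(Q,0)=\sum_j(d_{j-1}-d_j)(\beta_j-1)$ directly, whereas you obtain the same identity from the classical formula for the Milnor number together with Milnor's $\mu(Q)=2\delta(Q,0)$. Your Teissier-lemma alternative is also valid.
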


\begin{proof}
	By the definition of the characteristic exponents, every exponent
	$n<\beta_j$ with $a_n\ne0$ is divisible by $d_{j-1}$, but
	$a_{\beta_j}\ne0$ and $\beta_j$ is not divisible by $d_{j-1}$.  Hence, if
	$\zeta^{d_{j-1}}=1$, all terms of
	$\varphi(u)-\varphi(\zeta u)$ of order $<\beta_j$ vanish.  If also
		$\zeta^{d_j} \ne 1$, then $\zeta^{\beta_j}\ne1$, so the term indexed by
		$\beta_j$ does not vanish:
	\[
		a_{\beta_j}(1-\zeta^{\beta_j})u^{\beta_j}\ne0.
	\]
		So, by definition, $\kappa(\zeta) = \beta_j$.  Write
		$\mu_e=\{\zeta\in\C:\zeta^e=1\}$.  The roots of unity
	$\zeta^{d_{j-1}}=1$ with $\zeta^{d_j}\ne 1$, are exactly
	$\mu_{d_{j-1}}\setminus\mu_{d_j}$, so there are $d_{j-1}-d_j$ of
	them.  This proves the first equality in the displayed formula.

	Wall, in \cite[Corollary~4.3.7 and the computation after Example
	4.3.1]{Wall2004SingularPoints},  proves that, for the semigroup $S(Q)$
	of the branch,
	\[
		2\delta(Q,0)=N(S(Q))+1
	\]
	with
	\[
		N(S(Q))
		=
		\sum_{j=1}^g(d_{j-1}-d_j)(\beta_j-1)-1.
	\]
   Here $e_j$ in Wall's notation is our $d_j$.  Therefore,
	\[
		2\delta(Q,0)
		=
		\sum_{j=1}^g(d_{j-1}-d_j)(\beta_j-1).
	\]
	Since $d_0=m$ and $d_g=1$, we get
	$
		\sum_{j=1}^g(d_{j-1}-d_j)=m-1.
	$
	And finally, putting the three equations above together, we find
	\[
		\sum_{j=1}^g(d_{j-1}-d_j)\beta_j
		=
		2\delta(Q,0)+m-1.
	\]
\end{proof}

	To study the circle map $\alphacp{r}$ in
	\Cref{eq:conjugate-pair-circle-map}, for $r > 0$ we use the function
		\begin{equation}\label{eq:conjugate-zr-definition}
		z_r(\theta)
		=
		e^{im\theta}
		+
		\sum_{n>m}\overline{a_n}r^{n-m}e^{-in\theta}.
		\end{equation}
		Writing explicitly the common factor $r^m$ in
		\Cref{eq:conjugate-circle-map-coordinates}, we get
		\begin{equation}\label{eq:conjugate-alpha-zr-reduction}
		\alphacp{r}(\theta)
		=
		\left(r^mz_r(\theta),r^m\overline{z_r(\theta)}\right).
		\end{equation}
		By \Cref{eq:distinct-conjugate-real-structure},
		$X_\R=\{q=\overline p\}$.  For fixed $r>0$, define
		\begin{equation}\label{eq:conjugate-Lr-definition}
		\begin{aligned}
			L_r:\C&\longrightarrow X_\R,\\
			z&\longmapsto (r^mz,r^m\overline z).
		\end{aligned}
			\end{equation}
			Then $L_r$ is a real linear isomorphism and
			$\alphacp{r}=L_r\circ z_r$.

			\begin{remark}
				\label{rem:linear-reduction-to-zr}
				By
				\Cref{eq:conjugate-alpha-zr-reduction,eq:conjugate-Lr-definition},
				two parameter values have the
					same image under $\alphacp{r}$ if and only if they have the same
						image under $z_r$.  The map $L_r$ also preserves immersions, the transversality
						of the corresponding intersections, ordinary multiple points, and the
						value of $\dbl$.
					Thus these properties for $\alphacp{r}$ can be
					checked on $z_r$.
				\end{remark}

					Put $\mathbb T=\R/2\pi\Z$.  For $[t]\in\mathbb T$, we
					denote its distance to $[0]$ by
					\[
					\begin{aligned}
						|\cdot|_{\mathbb T}:\mathbb T&\longrightarrow\R_{\ge0},\\
						[t]&\longmapsto\min_{k\in\Z}|t+2\pi k|.
					\end{aligned}
					\]

					\begin{lemma}
					\label{lem:distinct-basic-estimates}
					The maps $\alphacp{r}$ form a real analytic
					family parametrized by $r$.  There are $C>0$,
					$\epsilon > 0$, and $r_0>0$ such that, for every
					$0<r<r_0$, the map $z_r$, and hence
					$\alphacp{r}$, is an immersion, and
					$z_r(\theta)\ne z_r(\eta)$ whenever
					$0<|\theta-\eta|_{\mathbb T}<\epsilon$, for all
					$\theta, \eta \in \R/2\pi\Z$.  Moreover,
					\begin{equation}\label{eq:conjugate-zr-uniform-bound}
						\sup_\theta |z_r(\theta)|\le C,
					\end{equation}
					and
					\begin{equation}\label{eq:conjugate-alpha-uniform-smallness}
						\sup_\theta |\alphacp{r}(\theta)|\le Cr^m.
					\end{equation}
			\end{lemma}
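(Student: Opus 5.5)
The plan is to treat $z_r$ as a real analytic perturbation of the leading map $z_0(\theta)=e^{im\theta}$, and to get every claim from uniform estimates on the tail $h_r(\theta):=\sum_{n>m}\overline{a_n}r^{n-m}e^{-in\theta}$.

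\textbf{Step 1 (analyticity and the bounds).} Since $\varphi$ converges, there is $R>0$ with $\sum_n|a_n|R^n<\infty$. For $0\le r\le R/2$ we then have the uniform estimate $|a_n|r^{n}\le |a_n|R^n 2^{-(n-m)}(R/2)^{m}\cdots$; more simply, $\sum_{n>m}|a_n| r^{n-m}\le r\sum_{n>m}|a_n|(R/2)^{n-m-1}=:C_1 r$. The same bound holds for the $\theta$-derivatives, which carry an extra factor $n$ that is absorbed by shrinking the radius. Hence the series in \Cref{eq:conjugate-circle-map-coordinates} and \Cref{eq:conjugate-zr-definition} converge on a complex neighbourhood of $\{|r|<R/2\}\times\mathbb T$, and $\alphacp{r}$ is real analytic in $(r,\theta)$. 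We also get $|h_r|\le C_1r$ and $|h_r'|\le C_2 r$. This gives \Cref{eq:conjugate-zr-uniform-bound} with $C=1+C_1R$, and \Cref{eq:conjugate-alpha-uniform-smallness} follows from \Cref{eq:conjugate-alpha-zr-reduction}, since $|L_r(z)|\le \sqrt2\,r^m|z|$. Enlarge $C$ to absorb the factor $\sqrt 2$.

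\textbf{Step 2 (immersion).} We have $z_r'(\theta)=ime^{im\theta}+h_r'(\theta)$, so $|z_r'|\ge m-C_2r>0$ for $r<m/C_2$. By \Cref{rem:linear-reduction-to-zr}, $\alphacp{r}$ is then also an immersion.

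\textbf{Step 3 (local injectivity).} Local injectivity, uniform in $r$, is the only step needing care. For $\theta\ne\eta$ with $|\theta-\eta|_{\mathbb T}<\epsilon$, write
\[
z_r(\eta)-z_r(\theta)=\int_\theta^\eta z_r'(t)\,dt .
\]
Compare this with $im e^{im\theta}(\eta-\theta)$. The leading part contributes an error
\[
\Bigl|\int_\theta^\eta im\bigl(e^{imt}-e^{im\theta}\bigr)\,dt\Bigr|\le m^2|\eta-\theta|^2/2,
\]
and the tail contributes at most $C_2r|\eta-\theta|$. Hence
\[
|z_r(\eta)-z_r(\theta)|\ge |\eta-\theta|\bigl(m-m^2\epsilon/2-C_2r\bigr)>0
\]
once we choose $\epsilon=1/m$ and $r_0<\min\{R/2,\,m/(2C_2)\}$. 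Everything here is uniform in $r$ because the constants come from the single majorant series. The only point to watch is that the differentiated tail is still dominated by $C_2r$, which we ensure by working on a slightly smaller radius than $R$.
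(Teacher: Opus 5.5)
Your proof is correct and follows essentially the same route as the paper: a uniform $O(r)$ bound on the tail and on its $\theta$-derivative gives the two sup bounds and the immersion property, and the fundamental theorem of calculus gives local injectivity uniformly in $r$. The one cosmetic difference is the comparison term: you compare $z_r(\eta)-z_r(\theta)$ with the linearization $ime^{im\theta}(\eta-\theta)$, which yields the explicit choice $\epsilon=1/m$, while the paper compares it with $e^{im\eta}-e^{im\theta}$ and uses a compactness constant $c$ with $|e^{im\theta}-e^{im\eta}|\ge c|\theta-\eta|_{\mathbb T}$. You should also delete the half-finished first inequality in your Step 1, since the bound you introduce with ``more simply'' is the one you actually use.
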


				\begin{proof}
					The expression
					\[
						r^mz_r(\theta)
						=
						r^me^{im\theta}
						+
						\sum_{n>m}\overline{a_n}r^ne^{-in\theta}
					\]
					is convergent for $|r|$ sufficiently small by analyticity of Puiseux series, so
					$(r, \theta)\mapsto\alphacp{r}(\theta)$
					is real analytic by
					\Cref{eq:conjugate-alpha-zr-reduction}.
					Differentiating $z_r$, we get
						\[
							z_r'(\theta)
							=
							ime^{im\theta}
							+
							\sum_{n>m}(-in)\overline{a_n}r^{n-m}e^{-in\theta}.
						\]
						The sum in the above expression is convergent for small
						$r$ (by analyticity of Puiseux series), and every term
						contains a positive power of $r$.  Hence it is uniformly
						small as $r\to0$.  Since $ime^{im\theta}$ never
						vanishes, $z_r$ is an immersion for every
						$r>0$ small enough.  By 
						\Cref{rem:linear-reduction-to-zr}, the same
					holds for $\alphacp{r}$.

						The estimate above for the derivative of $z_r$ also implies uniform local injectivity.
						Since the parameter circle is compact and
						$ime^{im\theta}$ never vanishes, there are $c>0$ and
						$\epsilon>0$
						such that, whenever
						\mbox{$0<|\theta-\eta|_{\mathbb T}<\epsilon$},
					\[
						|e^{im\theta}-e^{im\eta}|
						\ge c|\theta-\eta|_{\mathbb T}.
					\]
						Let $\mu(r)\to0$ be a uniform bound for the derivative of
						$z_r(\theta)-e^{im\theta}$.  The fundamental theorem of
						calculus gives
						\[
							\big|(z_r(\theta)-z_r(\eta))
							-(e^{im\theta}-e^{im\eta})\big|
							\le \mu(r)|\theta-\eta|_{\mathbb T}
						\]
						uniformly for
						$0<|\theta-\eta|_{\mathbb T}<\epsilon$.  After decreasing
						$r_0$ so that $\mu(r)<c$ for $0<r<r_0$, the reverse
						triangle inequality gives
						$z_r(\theta)\ne z_r(\eta)$ for all $0<r<r_0$ and
						$0<|\theta-\eta|_{\mathbb T}<\epsilon$.

					The uniform bound
					\Cref{eq:conjugate-zr-uniform-bound} follows from
					\Cref{eq:conjugate-zr-definition} and convergence of the
					Puiseux series.  Together with
					\Cref{eq:conjugate-alpha-zr-reduction}, this gives
					\Cref{eq:conjugate-alpha-uniform-smallness}.
				\end{proof}

			Next, we count the unordered pairs $\{\theta,\eta\}$, with
			$\theta\ne\eta$, such that $z_r(\theta) = z_r(\eta)$ and show
			that every multiple point is an ordinary point of multiplicity $k$. 
			This is the crucial ingredient for the proof of 
			\Cref{thm:distinct-tangent-conjugate-pair-predivide} right after.
			\begin{proposition}
			\label{prop:direct-root-of-unity-double-count}
			We use the hypotheses and notation fixed at the start of
			\Cref{sec:conjugate-pair-case}. Let
			\[
			\begin{aligned}
				z_r:\R/2\pi\Z&\longrightarrow\C,\\
				\theta&\longmapsto
				e^{im\theta}
				+
				\sum_{n>m}\overline{a_n}r^{n-m}e^{-in\theta}.
			\end{aligned}
			\]
			For every $r>0$ small enough, this map is an immersion whose
			multiple points are ordinary.  With $\kappa(\zeta)$ as defined in
			\Cref{eq:kappa_order},
			\[
				\dbl\bigl(z_r(\R/2\pi\Z)\bigr)
				=
				\sum_{\substack{\zeta^m=1\\ \zeta\ne1}}
				\bigl(m+\kappa(\zeta)\bigr)\\
				=
				2\delta(Q,0)+m^2-1.
			\]
			\end{proposition}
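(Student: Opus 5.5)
The plan is to work with the complex variable $w=e^{i\theta}$ on the unit circle. In this variable
\[
z_r(\theta)=w^m+r^{-m}\,\overline{\varphi(rw)},
\]
because $\overline{a_n}r^ne^{-in\theta}=\overline{a_n(rw)^n}$. By \Cref{lem:distinct-basic-estimates}, $z_r$ is an immersion and is injective on pairs with $0<|\theta-\eta|_{\mathbb T}<\epsilon$. So any coincidence $z_r(\theta)=z_r(\eta)$ with $\theta\ne\eta$ must have $\eta-\theta$ away from $0$. As $r\to0$, the leading term $e^{im\theta}$ forces $e^{i(\eta-\theta)}$ to converge to some $\zeta\ne1$ with $\zeta^m=1$, uniformly by compactness. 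I will therefore count ordered pairs $(\theta,\eta)$ by writing $\eta=\theta+\arg\zeta+\varepsilon$, with $\zeta$ ranging over nontrivial $m$-th roots of unity and $\varepsilon$ small. At the end I divide by $2$, since $(\theta,\eta)$ and $(\eta,\theta)$ correspond to $\zeta$ and $\zeta^{-1}$.

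For fixed $\zeta$, I expand the coincidence equation. Since $\zeta^m=1$, the left side is $w^m(e^{im\varepsilon}-1)$, and the right side is
\[
r^{-m}\,\overline{\varphi(rw)-\varphi(\zeta e^{i\varepsilon}rw)}.
\]
By \Cref{lem:root-of-unity-separation-sum} and the definition \eqref{eq:kappa_order}, the right side equals $r^{\kappa-m}\bigl(\overline{c}\,\overline{w}^{\kappa}+O(r)+O(\varepsilon)\bigr)$, where $\kappa=\kappa(\zeta)$ and $c=a_{\kappa}(1-\zeta^{\kappa})\ne0$ up to the analogous contributions of lower-order terms. Here I will check that the $\varepsilon$-dependence coming from lower-order terms, which are invariant under multiplication by $\zeta$, is $O(\varepsilon)$ times the left side. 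I then rescale $\varepsilon=r^{\kappa-m}\tau$ and divide by $r^{\kappa-m}$. This gives a real-analytic system in $(\theta,\tau,r)$ that extends to $r=0$ as
\[
im\tau\,w^m=\overline{c}\,w^{-\kappa},
\qquad\text{that is}\qquad
im\tau=\overline{c}\,e^{-i(\kappa+m)\theta}.
\]
Since $\tau$ is real, this has exactly $2(\kappa+m)$ solutions $\theta$, namely those where $\overline{c}\,e^{-i(\kappa+m)\theta}\in i\R$. Each solution has a uniquely determined $\tau\ne0$, and the Jacobian in $(\theta,\tau)$ is nondegenerate because the zeros of $\operatorname{Re}(\overline{c}e^{-i(\kappa+m)\theta})$ are simple. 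The implicit function theorem then gives exactly $2(\kappa(\zeta)+m)$ ordered solution pairs for each $\zeta$ and all small $r$. Halving and summing over $\zeta$ gives $\sum_{\zeta\ne1}(m+\kappa(\zeta))$. Then \Cref{lem:root-of-unity-separation-sum} gives
\[
m(m-1)+2\delta(Q,0)+m-1=2\delta(Q,0)+m^2-1.
\]
Uniformity, meaning that no solutions escape the rescaled region, follows from the same expansion: for $|\tau|$ large or $\varepsilon\gg r^{\kappa-m}$, the two sides have different orders.

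The main obstacle is showing that the multiple points are ordinary. The difficulty is that the tangent directions at $\theta$ and $\eta$ coincide to leading order, because $z_r'(\theta)\approx imw^m$ and $\zeta^m=1$. I will compute the tangent vectors to the next order, $r^{\kappa-m}$, in the rescaled variables and compare their arguments. The difference of derivatives is
\[
r^{\kappa-m}\bigl(-m^2\tau w^m+i\kappa\,\overline{c}\,w^{-\kappa}\cdot(\text{unit factor})\bigr)+o(r^{\kappa-m}).
\]
Using the solution relation $im\tau w^m=\overline{c}w^{-\kappa}$, this becomes a nonzero real multiple of $\tau w^m$, roughly $-m(m+\kappa)\tau w^m$. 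That vector is orthogonal to $imw^m$, so the tangent lines differ by an angle of order $r^{\kappa-m}$, which is nonzero. For a point of multiplicity $k\ge3$, I apply the same argument to each pair of its preimages, each with its own $\zeta$; this makes the tangent lines pairwise distinct, so every multiple point is ordinary. The immersion property was already established in \Cref{lem:distinct-basic-estimates}.
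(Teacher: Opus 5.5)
Your proof is correct and follows essentially the same route as the paper: you localize in the sectors $\eta-\theta\approx 2\pi j/m$, rescale by $\varepsilon=r^{\kappa-m}\tau$, reduce to the limiting equation $im\tau w^m=\overline c\,w^{-\kappa}$ (the paper's $G_\zeta=0$ with $x=\tau$) with its $2(m+\kappa)$ nondegenerate zeros, and conclude with the implicit function theorem. Two small remarks:
\begin{itemize}
\item The $\zeta$-invariant lower-order terms contribute $O(r|\varepsilon|)$, that is $O(r)$ rather than $O(\varepsilon)$ times the left side; this is what makes them vanish after rescaling.
\item Your next-order tangent computation is, up to sign, the paper's $\partial_\theta G_\zeta=m(m+\kappa)xe^{im\theta}$. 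So, as in the paper, transversality already follows from the nondegeneracy of the limiting zeros you use for the implicit function theorem, because full real rank of $D_{\theta,\eta}\bigl(z_r(\theta)-z_r(\eta)\bigr)$ is equivalent to $z_r'(\theta)$ and $z_r'(\eta)$ being real linearly independent.
\end{itemize}
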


			\begin{proof}
				Since this is a long proof, we organize it in small sections.
				\smallskip
				
				\noindent\emph{Localization in sectors.}
					By \Cref{lem:distinct-basic-estimates}, choose
					$\epsilon>0$ and $r_0>0$ such that, for every
				$0<r<r_0$, the map $z_r$ is an immersion and
					\[
						0<|\theta-\eta|_{\mathbb T}<\epsilon
						\quad\Longrightarrow\quad
						z_r(\theta)\ne z_r(\eta)
					\]
					for all $\theta,\eta\in\R/2\pi\Z$.
					Set
					\[
					\begin{aligned}
						F_r:(\R/2\pi\Z)\times(\R/2\pi\Z)&\longrightarrow\C,\\
						(\theta,\eta)&\longmapsto z_r(\theta)-z_r(\eta).
					\end{aligned}
					\]
					Write $z_r(\theta)=e^{im\theta}+T_r(\theta)$, where
					\[
					\begin{aligned}
						T_r:\R/2\pi\Z&\longrightarrow\C,\\
						\theta&\longmapsto
						\sum_{n>m}\overline{a_n}r^{n-m}e^{-in\theta}.
					\end{aligned}
					\]
					Since the Puiseux series is convergent, $T_r\to0$ uniformly on
					$\R/2\pi\Z$ as $r\to0$.
				Choose $0<\varepsilon_{\mathrm{sec}}<\epsilon$ so small that
				the {\em sectors}
				\[
					\mathcal S_j(\varepsilon_{\mathrm{sec}})
					=
					\left\{(\theta,\eta):
					|\eta-\theta-\alpha_j|_{\mathbb T}<\varepsilon_{\mathrm{sec}}\right\},
					\qquad
					\alpha_j=\frac{2\pi j}{m},\quad j=0,\ldots,m-1,
					\]
					are pairwise disjoint in
					$(\R/2\pi\Z)\times(\R/2\pi\Z)$.  If
					$F_r(\theta,\eta)=0$, then
					\[
						e^{im\theta}-e^{im\eta}
						=-(T_r(\theta)-T_r(\eta)),
					\]
					and therefore
					\[
						|1-e^{im(\eta-\theta)}|
						=
						|T_r(\theta)-T_r(\eta)|
						\le 2\|T_r\|_\infty
						\longrightarrow0
						\quad\enspace (r\to0).
					\]
					This convergence is uniform over all such solutions
					$(\theta,\eta)$.
					Since the zeros of $e^{im\beta}-1$ on $\R/2\pi\Z$ are
				precisely $\alpha_j$ for $j=0,\ldots,m-1$, then
				after decreasing $r_0$, every solution $(\theta,\eta)$ of
				$F_r(\theta,\eta)=0$ lies in one
				of the sectors $\mathcal S_j(\varepsilon_{\mathrm{sec}})$.  The sector
				$\mathcal S_0(\varepsilon_{\mathrm{sec}})$ contains no solutions away from the
				diagonal
				by \Cref{lem:distinct-basic-estimates}.  So all the
				solutions away from the diagonal lie in the nontrivial sectors
				$\mathcal S_j(\varepsilon_{\mathrm{sec}})$, with $1\le j\le m-1$.
				\Cref{fig:torus-sectors-m6} illustrates two of these sectors in 
				a fundamental domain of the parameter torus.

				\begin{figure}[htbp]
					\centering
					\tikzsetnextfilename{torus-sectors-m6}
					\scalebox{0.72}{\begin{tikzpicture}[
   x=6.8cm,
   y=6.8cm,
   line cap=butt,
   line join=round
]
\definecolor{sectorzero}{HTML}{2F80ED}
\definecolor{sectortwo}{HTML}{F2994A}

\fill[gray!4] (0,0) rectangle (1,1);

\begin{scope}
   \clip (0,0) rectangle (1,1);

   \draw[sectortwo,opacity=.35,line width=18pt]
      (-.15,.183333) -- (1.15,1.483333);
   \draw[sectortwo,opacity=.35,line width=18pt]
      (-.15,-.816667) -- (1.15,.483333);

   \draw[sectorzero,opacity=.35,line width=18pt]
      (-.15,-.15) -- (1.15,1.15);
   \draw[sectorzero,opacity=.35,line width=18pt]
      (-.15,.85) -- (.15,1.15);
   \draw[sectorzero,opacity=.35,line width=18pt]
      (.85,-.15) -- (1.15,.15);

   \draw[sectorzero,line width=.9pt]
      (-.15,-.15) -- (1.15,1.15);
   \draw[sectorzero,line width=.9pt]
      (-.15,.85) -- (.15,1.15);
   \draw[sectorzero,line width=.9pt]
      (.85,-.15) -- (1.15,.15);
   \draw[sectortwo,line width=.9pt]
      (-.15,.183333) -- (1.15,1.483333);
   \draw[sectortwo,line width=.9pt]
      (-.15,-.816667) -- (1.15,.483333);
\end{scope}

\draw[black,line width=.7pt] (0,0) rectangle (1,1);

\node[below left] at (0,0) {\(\scriptstyle(0,0)\)};
\node[below] at (1,0) {\(\scriptstyle 2\pi\)};
\node[left] at (0,1) {\(\scriptstyle 2\pi\)};
\node[below] at (.5,-.11) {\(\theta\)};
\node[left] at (-.11,.5) {\(\eta\)};

\begin{scope}[shift={(1.18,.66)}]
   \fill[sectorzero,opacity=.35] (0,0) rectangle (.16,.065);
   \draw[sectorzero,line width=.9pt] (0,.0325) -- (.16,.0325);
   \node[right] at (.21,.0325)
      {\(\mathcal S_0(\varepsilon_{\mathrm{sec}})\)};

   \fill[sectortwo,opacity=.35] (0,-.13) rectangle (.16,-.065);
   \draw[sectortwo,line width=.9pt] (0,-.0975) -- (.16,-.0975);
   \node[right] at (.21,-.0975)
      {\(\mathcal S_2(\varepsilon_{\mathrm{sec}})\), \(m=6\)};
\end{scope}
\end{tikzpicture}}
					\caption{Two of the sectors
					$\mathcal S_j(\varepsilon_{\mathrm{sec}})$ in a square fundamental
					domain of $(\R/2\pi\Z)^2$, illustrated for $m=6$.}
					\label{fig:torus-sectors-m6}
				\end{figure}

				\smallskip
				\noindent\emph{Intersection points in a sector.}
					Fix $j\in\{1,\ldots,m-1\}$, put
					$\alpha=\alpha_j$, $\zeta=e^{i\alpha}$, and put
					$\kappa = \kappa(\zeta)$.  By \Cref{eq:kappa_order}, $\kappa$ is
					the smallest exponent $n$ with $a_n\ne0$ and
				$\zeta^n\ne1$.  So, $a_n(1-\zeta^n)=0$ for 
				$m<n<\kappa$, and $a_\kappa(1-\zeta^\kappa)\ne 0$.
				We study the zeros of $F_r$ in $\mathcal S_j(\varepsilon_{\mathrm{sec}})$.
				There we can write
				\[
					\eta=\theta+\alpha+h
				\]
				with $h$ small.  Using $\zeta^m=1$, we have
				\begin{equation}\label{eq:F_r_estimate}
				\begin{aligned}
					F_r(\theta,\theta+\alpha+h)
					&=e^{im\theta}(1-e^{imh})\\
					&\quad+
					\sum_{n>m}\overline{a_n}r^{n-m}e^{-in\theta}
					(1-\zeta^{-n}e^{-inh}).
				\end{aligned}
				\end{equation}
				For $n < \kappa$ with $a_n\ne0$, one has
				$\zeta^n=1$, so
				$1-\zeta^{-n}e^{-inh}=1-e^{-inh}=O(h)$.
				Thus the terms corresponding to $n<\kappa$ contribute
				$O(r|h|)$. And the terms with $n\ge\kappa$
				contribute $O(r^{\kappa-m})$.  Also
				\[
					e^{im\theta}(1-e^{imh})
					=
					-imh e^{im\theta}+O(h^2).
				\]
				Hence \Cref{eq:F_r_estimate} gives the expansion
				\begin{equation}\label{eq:F_r_sector_asymptotic}
					F_r(\theta,\theta+\alpha+h)
					=
					-imh e^{im\theta}
					+O(h^2)
					+O(r|h|)
					+O(r^{\kappa-m}).
				\end{equation}
					In \Cref{eq:F_r_sector_asymptotic}, after decreasing
				$\varepsilon_{\mathrm{sec}}$, and then $r_0$, the sum of the terms involving
				$h$ has modulus bounded below by a positive
				multiple of $|h|$.  Therefore any zero
				$(\theta,\theta+\alpha+h)$ of $F_r$ in
				$\mathcal S_j(\varepsilon_{\mathrm{sec}})$ satisfies
				\begin{equation}\label{eq:sector-displacement-bound}
					|h|\le Cr^{\kappa-m}.
				\end{equation}
				Thus every zero in $\mathcal S_j(\varepsilon_{\mathrm{sec}})$ has the form
				\[
					\eta=\theta+\alpha+r^{\kappa-m}x
				\]
				with $x$ bounded, and $x \in \R$ because $h\in\R$.

					\smallskip
					\noindent\emph{Stability of zeros.}
					The bound above shows that every zero in this sector has
					$h=r^{\kappa-m}x$, with $x$ bounded independently of $r$.  We
					therefore compare the rescaled equation with its limit in $C^1$ on a
					fixed compact cylinder. We carry out this intricate 
					analysis in 
					4 steps.

					\smallskip
					\noindent\emph{Step 1: the limiting equation.}
				Motivated by \Cref{eq:sector-displacement-bound}, define
					\[
					\begin{aligned}
						F_{r,\zeta}:(\R/2\pi\Z)\times\R&\longrightarrow\C,\\
						(\theta,x)&\longmapsto
						r^{-(\kappa-m)}
						F_r(\theta,\theta+\alpha+r^{\kappa-m}x).
					\end{aligned}
					\]
				Substituting $h=r^{\kappa-m}x$ in
					\Cref{eq:F_r_estimate} gives the limiting equation
					$G_\zeta = 0$, where
					$c_\zeta=\overline{a_\kappa}(1-\zeta^{-\kappa})\ne0$ and
					\[
					\begin{aligned}
						G_\zeta:(\R/2\pi\Z)\times\R&\longrightarrow\C,\\
						(\theta,x)&\longmapsto
						-imxe^{im\theta}+c_\zeta e^{-i\kappa\theta}.
					\end{aligned}
					\]
					At any zero of $G_\zeta$,
					\[
						x=\frac{c_\zeta}{im}e^{-i(m+\kappa)\theta},
					\]
					so all zeros of $G_\zeta$ have
					$|x|=|c_\zeta|/m$.

						\smallskip
						\noindent\emph{Step 2: small perturbation.}
				By \Cref{eq:sector-displacement-bound}, the corresponding
					 values of $x$ at the zeros of $F_r$ in
				$\mathcal S_j(\varepsilon_{\mathrm{sec}})$ remain bounded as 
				$r\to0$.  Choose $R>0$ so that, for all $r$ small enough, these
					zeros have $|x|<R$. And also so that all zeros of
					$G_\zeta$ also have $|x|<R$.  Thus the zeros of
						$F_{r, \zeta}$ corresponding, under
						$(\theta,x)\mapsto
						(\theta,\theta+\alpha+r^{\kappa-m}x)$, to zeros of $F_r$ in
						$\mathcal S_j(\varepsilon_{\mathrm{sec}})$, and the zeros of
						$G_\zeta$ all lie in the compact subset
					\[
						(\R/2\pi\Z)\times[-R,R],
					\]
					and none lies on its boundary.  Since $R$ is now
					fixed, after decreasing $r_0$ if necessary we may assume
					\[
						r^{\kappa-m}R<\varepsilon_{\mathrm{sec}}
					\]
				for every $0<r<r_0$.  Then, for $|x|\le R$, the point 
				$(\theta,\eta)
				=
				(\theta,\theta+\alpha+r^{\kappa-m}x)
				$
				lies in $\mathcal S_j(\varepsilon_{\mathrm{sec}})
				\subset(\R/2\pi\Z)^2$. On the compact cylinder
				$(\R/2\pi\Z)\times[-R,R]$, putting
				$h=r^{\kappa-m}x$ in \Cref{eq:F_r_estimate} yields that
					\[
						F_{r,\zeta}\longrightarrow G_\zeta
					\]
					uniformly. And the same is true after differentiating with
					respect to $\theta$ and $x$.  Thus
					$F_{r,\zeta}-G_\zeta$, together with its first derivatives,
						is uniformly small on the whole compact cylinder.

					\smallskip
					\noindent\emph{Step 3: zeros of the limiting equation.}
					We now count the zeros of $G_\zeta$.  Since $x \in \R$,
					the equation displayed in Step 1 is equivalent to
					\[
						\arg\left(\frac{c_\zeta}{im}\right)
						-(m+\kappa)\theta\in\pi\Z.
					\]
					As $\theta$ runs over $\R/2\pi\Z$, this equation has
					exactly $2(m+\kappa)$ solutions.  At each of them
					$x\ne0$, because $c_\zeta\ne0$. These zeros are 
					nondegenerate.  Indeed,
					\[
						\partial_xG_\zeta=-ime^{im\theta},
					\]
					and, at a zero of $G_\zeta$ we have
					\[
					\begin{aligned}
						\partial_\theta G_\zeta
						&=m^2xe^{im\theta}-i\kappa c_\zeta e^{-i\kappa\theta}\\
						&=m^2xe^{im\theta}+\kappa mxe^{im\theta}
						=m(m+\kappa)xe^{im\theta}.
					\end{aligned}
					\]
					Thus $\partial_xG_\zeta$ is a real multiple of
					$ie^{im\theta}$, and $\partial_\theta G_\zeta$ is a
					 real multiple of $e^{im\theta}$.  They are
					real linearly independent.

					\smallskip
					\noindent\emph{Step 4: stability and transversality of the 
					intersections.}
					Take disjoint small neighborhoods of the zeros of $G_\zeta$.
					On the complement, $|G_\zeta|$ has a positive minimum.
						On one hand, since $F_{r,\zeta}$, and also its first 
						derivatives,
						converge uniformly to $G_\zeta$ on the compact subset, no
						zeros of $F_{r,\zeta}$ occur on this complement for
						$r\ll1$. On another hand, the implicit function theorem 
						gives one and
						only one zero of $F_{r, \zeta}$ near each zero of
						$G_\zeta$ (equivalently, we are just using that 
						nondegenerate zeros are stable under $C^1$ 
						perturbations).  Moreover, the full real rank of
						$D G_\zeta$ at each limiting zero persists at the
						corresponding zero of $F_{r,\zeta}$.  Hence, for every
						sufficiently small $r>0$,
				the sector $\mathcal S_j(\varepsilon_{\mathrm{sec}})$ contains exactly
					$2(m+\kappa(\zeta))$ ordered solutions of
					\[
						z_r(\theta)=z_r(\eta),
						\quad
						\theta\ne\eta,
					\]
							and $D F_{r,\zeta}$ has full real rank at each corresponding
							zero in the variables $(\theta,x)$.
					Since the change of variables from $(\theta,x)$ to
					$(\theta, \eta)$ has nonzero Jacobian,
					full real rank of $D F_{r,\zeta}$ at a zero is equivalent
					to full real rank of
					\[
						D_{\theta,\eta}\bigl(z_r(\theta)-z_r(\eta)\bigr).
					\]
					This is equivalent to the real linear independence of
					$z_r'(\theta)$ and $z_r'(\eta)$, hence to distinct
					tangent lines at the point $z_r(\theta)=z_r(\eta)$.
					Thus all the self-intersections found in this sector are
					transverse.

				\smallskip
				\noindent\emph{Putting everything together.}
				Now we just sum over all sectors and get
				\[
					2\sum_{j=1}^{m-1}
					\bigl(m+\kappa(e^{2\pi ij/m})\bigr)
					=
					2\sum_{\substack{\zeta^m=1\\ \zeta\ne1}}
					\bigl(m+\kappa(\zeta)\bigr)
					\]
					ordered pairs $(\theta,\eta)$ with $\theta \ne \eta$ such 
					that
					$z_r(\theta)=z_r(\eta)$.  Here we are counting each 
					unordered pair twice. Therefore (recall 
					\Cref{def:pairwise-double-count})
				\[
					\dbl\bigl(z_r(\R/2\pi\Z)\bigr)
					=
					\sum_{\substack{\zeta^m=1\\ \zeta\ne1}}
					\bigl(m+\kappa(\zeta)\bigr).
				\]
				Using \Cref{lem:root-of-unity-separation-sum}, we get
				\[
	\dbl\bigl(z_r(\R/2\pi\Z)\bigr)
					=m(m-1)+2\delta(Q,0)+m-1
					=2\delta(Q,0)+m^2-1.
				\]
					The transversality
					of Step 4  shows that every multiple point is an
					ordinary multiple point (recall \Cref{def:predivide}).  
			\end{proof}
		The proof of the main theorem of this section now follows easily:
		\begin{proof}[Proof of
			\Cref{thm:distinct-tangent-conjugate-pair-predivide}]
				By \Cref{lem:distinct-basic-estimates}, the maps
				$\alphacp{r}$ form a real analytic family in $r$ and are immersions for
				$0<r\ll1$.  By
			\Cref{prop:direct-root-of-unity-double-count}, the image
			$z_r(\R/2\pi\Z)$ has only ordinary multiple points and
			\[
			\dbl\bigl(z_r(\R/2\pi\Z)\bigr)
			=
			2\delta(Q,0)+m^2-1.
				\]
					By \Cref{rem:linear-reduction-to-zr}, the same statements and the
					same value of $\dbl$ hold for
				\[
				\Gamma^{\mathrm{cp}}_{\boldsymbol\gamma,r}
				=
				\alphacp{r}(\R/2\pi\Z).
				\]
			By \Cref{eq:conjugate-alpha-uniform-smallness}, choose a closed real
			disk $D\subset\mathbb B_\R$, with
			$0 \in \operatorname{int}D$, and
			decrease $r_0$ so that this curve lies in $\operatorname{int}D$ for
			all $0<r<r_0$.  Hence
			$\Gamma^{\mathrm{cp}}_{\boldsymbol\gamma, r}$ is a predivide in 
			$D$. Since $Q$ and $\overline Q$ have distinct tangent lines,
			$
			(Q\cdot\overline Q)_0=m^2
			$.  Therefore
			\[
			\dbl(\Gamma^{\mathrm{cp}}_{\boldsymbol\gamma,r})
			=
			2\delta(Q,0)+(Q\cdot\overline Q)_0-1.
			\]
			Using \Cref{eq:delta-branch-formula} and
			$\delta(\overline Q,0)=\delta(Q,0)$, we get
			\[
			\dbl(\Gamma^{\mathrm{cp}}_{\boldsymbol\gamma,r})
			=
			\delta(Q\cup\overline Q,0)-1.
			\]
			\end{proof}

		\subsection*{Examples and limitations of the trace map}

		The first example treats the singularity singled out by Leviant and 
		Shustin as the first case not covered by their results.
		The other two examples show that our construction does not work if the 
		pair of complex conjugate branches has the same tangent or 
		or if we don't choose a Newton--Puiseux parametrization (more 
		concretely, if $p\circ\gamma_Q(u)\ne u^m$).

		\begin{example}
		\label{ex:leviant-shustin-first-missing-example}
		Leviant and Shustin identify the following singularity as the simplest
		case beyond the range of their Theorem 1
		\cite[Example 1(3)]{LeviantShustin2018Morsifications}.  In real
		coordinates $(x,y)$, put
		\[
			w_\pm=y\pm ix.
		\]
		The singularity is defined by
		\[
			\left((w_+^2-x^3)^2-x^5w_+\right)
			\left((w_-^2-x^3)^2-x^5w_-\right)=0.
		\]
		Let $Q$ be the branch defined by the first factor.  Together with its
		conjugate branch, it forms an orbit consisting of a conjugate pair in the sense of
		\Cref{sec:trace-map}.  Write
		\[
			x=t^4,
			\qquad
			w_+=t^6\lambda(t),
		\]
		where $\lambda(t)$ is the analytic solution of 
		$(\lambda(t)^2-1)^2=t^2\lambda(t)$
		satisfying $\lambda(0)=1$ and $\lambda'(0)=1/2$.
		Its expansion in power series starts by
		\[
			\lambda(t)
			=
			1+\frac12t-\frac1{64}t^3+\frac1{128}t^4-\frac9{4096}t^5+O(t^6).
		\]
		Thus $m=4$ and $(\beta_1,\beta_2)=(6,7)$. The tangent line of $Q$
		is $\{w_+=0\}$.  Set
		$q=-iw_+/2$, so $\ker q = T_0Q$, and define $p$ from $q$ by
		\Cref{eq:distinct-conjugate-adapted-p-coordinate}. So we have
		\[
			p=\frac{x+iy}{2}=x+\frac i2w_+,
			\qquad
			q=\frac{x-iy}{2}=-\frac i2w_+.
		\]
		With respect to these coordinates, the real structure has the expression
		in \Cref{eq:distinct-conjugate-real-structure}, and the tangent lines of
		$Q$ and $\overline Q$ are distinct.  We find a
		Newton--Puiseux parametrization as in
		\Cref{eq:distinct-conjugate-np-parametrization} and get
		$
			\gamma_Q(u)=(u^4,\varphi(u)),
		$
		with
		\begin{equation}\label{eq:leviant-shustin-normalized-puiseux}
		\begin{aligned}
			\varphi(u)
			&=
			-\frac i2u^6-\frac i4u^7-\frac38u^8
			+\left(-\frac{13}{32}+\frac i{128}\right)u^9
			\\
			&\quad
			+\left(-\frac7{64}+\frac{83i}{256}\right)u^{10}
			+O(u^{11}).
		\end{aligned}
		\end{equation}
		The conjugate branch is parametrized, as in
		\Cref{eq:distinct-conjugate-conjugate-parametrization}, by
		$
			\gamma_{\overline Q}(v)=(\overline{\varphi}(v),v^4).
		$
		Set
		\[
			\boldsymbol\gamma=(\gamma_Q,\gamma_{\overline Q}).
		\]
		For drawing matters of
		\Cref{fig:leviant-shustin-first-missing-divide}, we discard the
		terms of order $O(u^{11})$ in
		\Cref{eq:leviant-shustin-normalized-puiseux}.  This leaves
		$m,\beta_1,\beta_2$, and hence the values of $\kappa$, unchanged by
		\Cref{lem:root-of-unity-separation-sum}.  Therefore
		\Cref{prop:direct-root-of-unity-double-count} gives the same value of
		$\dbl$.
		Equivalently,  the drawn curve is the image in $\C$ of
		\[
		\begin{aligned}
			z_r^{\le10}(\theta)
			&=
			e^{4i\theta}
			+\frac i2r^2e^{-6i\theta}
			+\frac i4r^3e^{-7i\theta}
			-\frac38r^4e^{-8i\theta}
			\\
			&\quad
			+\left(-\frac{13}{32}-\frac i{128}\right)r^5e^{-9i\theta}
			+\left(-\frac7{64}-\frac{83i}{256}\right)r^6e^{-10i\theta}.
		\end{aligned}
		\]
		The sequence of greatest common divisors defined in
		\Cref{eq:characteristic-gcd-sequence} is
			\[
				d_0=4,
				\qquad
				d_1=\gcd(4,6)=2,
				\qquad
				d_2=\gcd(4,6,7)=1.
			\]
			By \Cref{lem:root-of-unity-separation-sum}, the roots
			$i$ and $-i$ have $\kappa=6$, and the root $-1$ has
			$\kappa = 7$.  Hence
			\Cref{prop:direct-root-of-unity-double-count} (recall also 
			\Cref{rem:linear-reduction-to-zr}), gives
			\[
			\dbl(\Gamma^{\mathrm{cp}}_{\boldsymbol\gamma,r})
			=(4+6)+(4+6)+(4+7)
			=31.
			\]
		Here $\delta(Q,0)=8$.  Thus $31$ agrees with the
		formula in \Cref{thm:distinct-tangent-conjugate-pair-predivide}:
		\[
			2\delta(Q,0)+(Q\cdot\overline Q)_0-1
			=
			2\cdot 8+4^2-1
			=
			31.
		\]

		\begin{figure}[!ht]
			\centering
			\tikzsetnextfilename{first-missing-divide}
			\scalebox{0.9}{\input{first_missing_divide.tikz}}
			\caption{The predivide for the Leviant--Shustin example.  The displayed
			curve is the image of $z_r^{\le10}$.  By
			\Cref{rem:linear-reduction-to-zr}, $L_r$ preserves the pairs of
			parameters with the same image, the transversality of the corresponding
			intersections, and $\dbl$.}
			\label{fig:leviant-shustin-first-missing-divide}
		\end{figure}
	\end{example}

	\begin{example}
		\label{ex:same-tangent-not-predivide}
		Use complex coordinates $(x, y)$ for which the real structure is
		\[
			(x,y)\longmapsto(\overline x,\overline y).
		\]
		Let $Q$ be the branch parametrized by $\gamma_Q(u)=\left(u^2,u^3+i 
		u^4\right)$. The conjugate branch is parametrized by $\gamma_{\overline 
		Q}(v)=\left(v^2,v^3-i v^4\right)$.
		Set $\boldsymbol\gamma=(\gamma_Q,\gamma_{\overline Q})$.
		The two tangent lines are the same real line $\{y=0\}$.  The two
		branches are distinct.  Indeed, if they were equal, then, because the
		first coordinate is $u^2$, there would be a root of unity
		$\xi^2=1$ such that
		\[
			u^3+i u^4
			=
			(\xi u)^3-i(\xi u)^4 .
		\]
		If $\xi=1$, comparison of the coefficients of $u^4$ gives
		$i=-i$.  If $\xi=-1$, comparison of the coefficients of $u^3$
		gives $1 = -1$.  Both cases are impossible.

		For $s=r^2>0$, the circle map
		$\alphacp{r}$ of
		\Cref{eq:conjugate-pair-circle-map} is
		$
			\alphacp{r}(\theta)=
			\bigl(
			2r^2\cos(2\theta),\,
			2r^3\cos(3\theta)-2r^4\sin(4\theta)
			\bigr).
		$
		At $\theta_0=\frac{\pi}{2}$ and $\eta_0=\frac{3\pi}{2}$, one has
		\[
			\alphacp{r}(\theta_0)
			=
			\alphacp{r}(\eta_0)
			=
			(-2r^2,0).
		\]
		The derivatives at these two parameter values are
		\[
			\bigl(\alphacp{r}\bigr)'(\theta_0)
			=
			(0,\,6r^3-8r^4),
			\text{ and }
			\bigl(\alphacp{r}\bigr)'(\eta_0)
			=
			(0,\,-6r^3-8r^4).
		\]
		For every $r>0$  small enough, both vectors are nonzero and 
		generate the same line.  So 
		$\Gamma^{\mathrm{cp}}_{\boldsymbol\gamma,r}$ 
		has a
		self-intersection point which is actually not a node, but a {\em 
		self-tangency}.  By
		\Cref{def:predivide}, this
		$\Gamma^{\mathrm{cp}}_{\boldsymbol\gamma, r}$ is not a predivide.
		Thus the conclusion of
		\Cref{thm:distinct-tangent-conjugate-pair-predivide} does not always hold
		when the two branches have the same tangent line.

		\begin{figure}[!ht]
			\centering
			\includegraphics[width=.4\linewidth]
			{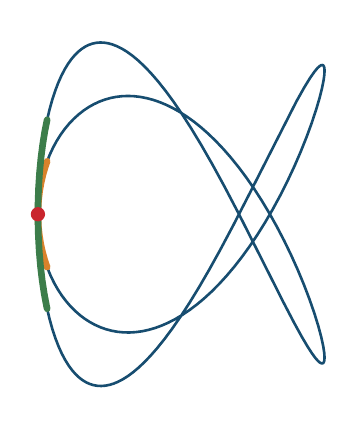}
			\caption{The curve $\Gamma^{\mathrm{cp}}_{\boldsymbol\gamma,r}$ in
			\Cref{ex:same-tangent-not-predivide}.  The marked arcs meet at the red
			point with the same tangent line.}
			\label{fig:same-tangent-failure}
		\end{figure}

	\end{example}

	\begin{example}
		\label{ex:adapted-source-coordinate-needed}
		Use complex coordinates $(p, q)$ in which the real structure has the
		expression $(p,q)\mapsto(\overline q,\overline p)$ (recall 
		\Cref{eq:distinct-conjugate-real-structure}). 
		Let $Q$ be the branch parametrized by $
			\widetilde\gamma_Q(w)=(w^2,w^3)$. 
		The induced conjugate parametrization is $\widetilde\gamma_{\overline 
		Q}(z)=(z^3,z^2)$. 
		These parametrizations have precisely the form in
		\Cref{eq:distinct-conjugate-np-parametrization,eq:distinct-conjugate-conjugate-parametrization},
		with $m = 2$ and $\varphi(w)=w^3$.
		Set
		\[
			\widetilde{\boldsymbol\gamma}
			=
			(\widetilde\gamma_Q,\widetilde\gamma_{\overline Q}).
		\]
		The tangent line of $Q$ is $\{q=0\}$, and the conjugate branch has 
		tangent
		line $\{p=0\}$.  That is, the conjugate tangent lines are distinct.  
		Here
		$m=2$, and $\delta(Q,0)=1$ by
		\Cref{lem:root-of-unity-separation-sum}.  Since the tangent lines are
		distinct, $(Q\cdot\overline Q)_0=m^2=4$.  For the parameter $w$, the value of
		$\dbl(\Gamma^{\mathrm{cp}}_{\widetilde{\boldsymbol\gamma},r})$ predicted by
		\Cref{thm:distinct-tangent-conjugate-pair-predivide} is therefore
		\[
			2\delta(Q,0)+(Q\cdot\overline Q)_0-1=5.
		\]
		Next, we follow the construction using a different primitive 
		parametrization. We consider the parametrization 
		\[
			\gamma_Q(u)=\bigl((u+u^2)^2,(u+u^2)^3\bigr)
			\] 
		and, its {\em conjugate},
		\[
			\gamma_{\overline Q}(v)
			=
			\bigl((v+v^2)^3,(v+v^2)^2\bigr).
		\]
	which parametrize the same pair of complex 
		conjugate branches.
		On the circle $\widehat{\mathcal R}_{s}$ from
		\Cref{eq:positive-real-source-circle} (with $s=r^2$) put 
			$u=re^{i\theta}, v=re^{-i\theta}$ and $
			w_\theta=re^{i\theta}+r^2e^{2i\theta}$.
		Let $\boldsymbol\gamma=(\gamma_Q,\gamma_{\overline Q})$.  By
		\Cref{eq:conjugate-pair-circle-map}, the first coordinate of
		$\alphacp{r}$ is
		\[
			f_r(\theta)=w_\theta^2+\overline{w_\theta}^{\,3},
		\]
		and the second coordinate is $\overline{f_r(\theta)}$.  Hence pairs of
		distinct parameters with the same image are exactly the zeros of
		$f_r(\theta)-f_r(\eta)$ with $\theta\ne\eta$. Expanding $f_r$ and 
		applying the argument used to localize the solutions in the proof of
		\Cref{prop:direct-root-of-unity-double-count} shows that $\alphacp{r}$ is
		an immersion for $0<r\ll1$ and that every off-diagonal solution of
		$f_r(\theta)=f_r(\eta)$ can be written as
		\[
			\eta=\theta+\pi+rx,
		\]
		with $x$ in a fixed compact set.  In these variables, a direct
		calculation shows that
		\[
			r^{-3}e^{-2i\theta}
			\bigl(f_r(\theta)-f_r(\theta+\pi+rx)\bigr)
			\longrightarrow
			-2ix+4e^{i\theta}+2e^{-5i\theta}.
		\]
		The limiting equation has exactly two zeros:
		\[
			(\theta,x)=(\pi/2,1),
			\text{ and }
			(\theta,x)=(3\pi/2,-1).
		\]
		At both zeros, $\partial_x=-2i$ and $\partial_\theta=\mp14$.  Hence
		both zeros are nondegenerate.
		The argument for the stability of the zeros in
		\Cref{prop:direct-root-of-unity-double-count} therefore gives two ordered
		solutions for  $r>0$ small enough.  They represent one
		unordered pair, as shown in
		\Cref{fig:adapted-source-coordinate-needed}.  Hence
		\[
			\dbl(\Gamma^{\mathrm{cp}}_{\boldsymbol\gamma,r})=1.
		\]

		\begin{figure}[htbp]
			\centering
			\includegraphics[width=.65\linewidth]
			{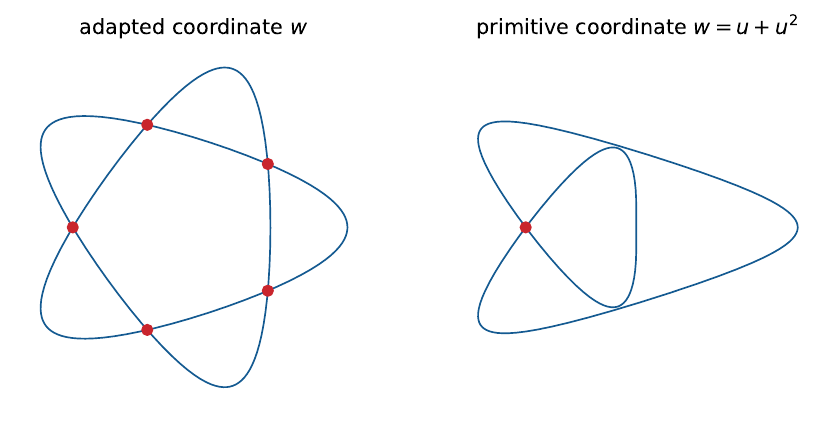}
			\caption{The curves $\Gamma^{\mathrm{cp}}_{\widetilde{\boldsymbol\gamma},r}$
			and $\Gamma^{\mathrm{cp}}_{\boldsymbol\gamma,r}$ for the two 
			parametrizations.}
			\label{fig:adapted-source-coordinate-needed}
		\end{figure}
		This example shows that an arbitrary parametrization does not
		necessarily give the correct number of double points and so 
		Newton--Puiseux parametrizations are essential in our theorems.
	\end{example}

	\section{Construction for all orbits of branches}
	\label{sec:global-predivide}

	The proof follows a similar technique as Leviant
	and Shustin in their proof of Theorem~1
	\cite[Sections~2.1.1 and~2.3]{LeviantShustin2018Morsifications} which, in 
	turn, is a generalization of the induction by translations and contractions 
	by A'Campo in \cite{ACampo1975MonodromyDeploiement}. Our
	\Cref{thm:distinct-tangent-conjugate-pair-predivide} provides parametrizations
	for the orbits of conjugate branches not covered in the Leviant and Shustin work 
	and that's the extra input here.  We combine these
	parametrizations of these circles with the
	parametrizations of the real branches, perturb them before each contraction,
	and count their intersections throughout the descent.

	Leviant and Shustin note, following S.~M.~Gusein-Zade, that the function
	proposed in Formula~(1) on p.~12 of A'Campo's paper does not have the
	claimed properties
	\cite[Theorem~1, \S~2]{ACampo1975MonodromyDeploiement}
	\cite[p.~308, footnote~2]{LeviantShustin2018Morsifications}.  We do not use
	this formula.  Instead, we carry out the descent using parametrizations.
	 Once the parametrizations and marked
	intersections at one stage have been fixed, we choose a higher power of
	$\lambda$ for the next divide or translation.  The circle then lies at a
	smaller scale, while the translation is chosen to  preserve the earlier 
	transverse intersections (see
	step~\ref{it:global-algorithm-translations}).  Then, after the descent is
	finished, Remmert's result says that the (deformations) of the 
	parametrizations can be given by an equation. In other words, we put 
	together
	the parametrizations
	into the map in \Cref{eq:global-normalization-map}, restrict it to a proper
	finite map, and take its analytic image. 

	Next, we give a roadmap of the proof in order to make it easier to follow. 
	It also help us introduce some notation.
		\begin{roadmap}\phantomsection\label{roadmap:global-construction}
		\begin{enumerate}
			\item\label{it:global-algorithm-resolution}
			\emph{Resolution.}
			We blow up the real points where strict transforms still meet or where the
			total transform fails to have normal crossings.  We continue until
			the real strict transforms are smooth and transverse to the exceptional
			divisor and also, until every conjugate pair of branches has 
			separated after its last common real
			infinitely near point.  The resulting modification of $X_0$ is 
			denoted by $\rho:X_N=X_\rho\longrightarrow X_0$.

			\item\label{it:global-algorithm-initialization}
			\emph{Real branches.}
			For every strict transform of a real branch, we choose a primitive 
			parametrization
			defined on a normalization disk.  This disk contains an interval 
			that is fixed by the
			real structure. This interval parametrizes the real arc used in the 
			descent, that 
			will eventually yield the divide for this branch.

			\item\label{it:global-algorithm-packets}
			\emph{Packets.}
			At each step of the descent, we consider every point $c$ on the current
			surface that is the last common (real) infinitely near point of at
			least one
			conjugate pair.  The conjugate pairs associated with $c$ form the packet
			$\mathcal P_c$.  We consider a partition of this {\em packet} into 
			the
			unordered pairs
			of conjugate tangent lines.  For every $\lambda>0$ small enough, we 
			apply
			\Cref{thm:distinct-tangent-pair-families} to these pairs. The 
			circle maps obtained in this way,
			together with the real arcs already passing through $c$, 
			parametrize a local predivide at $c$.

			\item\label{it:global-algorithm-translations}
			\emph{Translations.}
			Before the next contraction, we translate the parametrized predivide in
			real local coordinates, using a new (high) power of $\lambda$.  We 
			choose the
			translation so that the predivide meets the remaining exceptional
			components in pairwise distinct transverse real points away from their
			crossings.  

			\item\label{it:global-algorithm-first-contraction}%
			\label{it:global-algorithm-assembly}
			\emph{Contractions.}
			We contract the next exceptional components in reverse order.  Distinct
			transverse intersections with a contracted component give smooth germs with
			distinct tangent lines at the image of this component.  On the resulting
			surface, we construct the local predivide for every packet as in
			step~\ref{it:global-algorithm-packets} and translate the parametrizations as
			in step~\ref{it:global-algorithm-translations}.  We repeat this process until
			we reach $X_0$.

			\item\label{it:global-algorithm-count}%
			\label{it:global-algorithm-equations}
			\emph{Counting the nodes and finding an equation.}
			We record the unordered pairs of distinct points of the normalization that acquire
			the same image during the construction.  For each new circle or translation,
			we use a power of $\lambda$ higher than the preceding ones.
			After the final contraction, small perturbations of the local parametrizations
			of the branches replace each ordinary point of multiplicity $k$ by
			$\binom{k}{2}$ nodes.  The formulas that control the changes of
			$\delta$ and
			intersection multiplicity after a blowup give
			$\delta(C, 0)-\imbr(C, 0)$ nodes.
		\end{enumerate}
	\end{roadmap}

	\subsection*{Circle maps at a real infinitely near point}
	We define the notion of {\em adapted local data} which will be repeatedly 
	use in this section.
 	Let $X$ be a smooth complex surface with real structure $\sigma_X$, let
	$c\in X_\R$, and let $\mathcal T$ be a finite set of nonreal complex
	lines in $T_cX$.  Let $\mathcal P$ be a finite set of conjugate pairs
	$\mathfrak q = \{Q, \overline Q\}$, labeled so that
	$T_{\mathfrak q}:=T_cQ$ belongs to $\mathcal T$.
	The {\bf adapted local data} for
	$(X,c,\mathcal T,\mathcal P)$ consists of a
	coordinate
	neighborhood $(U, \vartheta)$ with $U$ invariant by $\sigma_X$, one complex 
	linear coordinate change $H_T$
	for each $T\in\mathcal T$, and one compatible pair of primitive
	parametrizations $\boldsymbol\gamma_{\mathfrak q}$ for each
	$\mathfrak q\in\mathcal P$.  These choices are specified as follows.
	 We can choose a neighborhood $U$
	of $c$, invariant under $\sigma_X$, and a
	centered holomorphic chart
	\[
		\vartheta:(U,c)\longrightarrow(\C^2,0)
	\]
	in which $\sigma_X$ is coordinatewise conjugation (recall 
	\Cref{rem:local-coordinates-real-structure}).  Denote
	$U_\R=U\cap X_\R$ the corresponding real part.  For each $T\in\mathcal T$, 
	choose a complex linear
	isomorphism $H_T:\C^2\to\C^2$, and put
	$\chi_T=H_T\circ\vartheta$.  Writing $(\xi, \eta)$ for the standard
	coordinates on the target $\C^2$, the adapted local data allows us to 
	choose $H_T$ so that 
	\[
		d(\chi_T)_c(T)=\C\times\{0\}
		\quad \text{ and } \quad
		\chi_T\circ\sigma_X\circ\chi_T^{-1}(\xi,\eta)
		=(\overline\eta,\overline\xi),
	\]
	hold.
	These isomorphisms exist because $T\ne\overline T$ for every
	$T\in\mathcal T$.  For $\mathfrak q=\{Q,\overline Q\}\in\mathcal P$, put
	$m_{\mathfrak q}=\mt(Q,c)$ and, as in
	\Cref{eq:distinct-conjugate-np-parametrization,eq:distinct-conjugate-conjugate-parametrization},
	choose primitive parametrizations
	$\boldsymbol\gamma_{\mathfrak q}
	=(\gamma_Q,\gamma_{\overline Q})$, with coordinates $u$ and $v$ on the
	respective source disks, such that
	\[
	\begin{aligned}
		\chi_{T_{\mathfrak q}}\circ\gamma_Q(u)
		&=(u^{m_{\mathfrak q}},\varphi_{\mathfrak q}(u)),\\
		\chi_{T_{\mathfrak q}}\circ\gamma_{\overline Q}(v)
		&=(\overline{\varphi_{\mathfrak q}}(v),v^{m_{\mathfrak q}}),
	\end{aligned}
	\qquad
	\varphi_{\mathfrak q}(u)
	=\sum_{n>m_{\mathfrak q}}a_{\mathfrak q,n}u^n.
	\]
	Here $\overline{\varphi_{\mathfrak q}}(v)
	=\sum_{n>m_{\mathfrak q}}\overline{a_{\mathfrak q,n}}v^n$.
	For $r>0$ small, the addition in
	\Cref{eq:conjugate-pair-circle-map} is performed in the chart $\vartheta$:
	\[
		\alpha^{\mathrm{cp}}_{\boldsymbol\gamma_{\mathfrak q},r}(\theta)
		=
		\vartheta^{-1}\!\left(
			\vartheta\circ\gamma_Q(re^{i\theta})
			+
			\vartheta\circ\gamma_{\overline Q}(re^{-i\theta})
		\right).
	\]
	The next two lemmas and their proofs use adapted local data in this sense.

	\begin{lemma}
		\label{thm:common-tangent-annuli}
		Let $X$ be a smooth complex surface with real structure $\sigma_X$ and 
		let
		$c\in X_\R$ be a point in its real part.  Let
		$\mathcal P=\{\mathfrak q_1,\ldots,\mathfrak q_N\}$ be a finite
		set of conjugate pairs of branches at $c$.
		Suppose that there is a complex line
		$L$ through $c$, with $L\ne\overline L$, such that for every pair 
		$\mathfrak q_i=\{Q_i,\overline Q_i\}$ we have
		$T_cQ_i = L$.
		Fix adapted local data for
		$(X,c,\{L\},\mathcal P)$.
		Write $m_i=m_{\mathfrak q_i}$ and
		$\boldsymbol\gamma_i=\boldsymbol\gamma_{\mathfrak q_i}$. Let $M$ be a 
		positive integer divisible by $m_1,\ldots,m_N$ and set
		$\rho_i(\lambda)=\lambda^{M/m_i}$. Let $W$ be a small open neighborhood 
		of $c$ in $X_\R$. Then there exist
		$\lambda_0>0$ and a closed real disk $D\subset W\cap U_\R$, with
		$c \in D$, such that, for every
		$0<\lambda<\lambda_0$, the union of the maps
		\[
			\alpha^{\mathrm{cp}}_{\boldsymbol\gamma_i,\rho_i(\lambda)}
			:
			\R/2\pi\Z\longrightarrow D,
			\qquad i=1,\ldots,N,
		\]
		parametrize a predivide in $D$.  If we denote by
		$A_{\mathfrak q_i,\lambda}
		=\alpha^{\mathrm{cp}}_{\boldsymbol\gamma_i,\rho_i(\lambda)}
		(\R/2\pi\Z)\subset\operatorname{int}D$,  then
		\[
			\dbl(A_{\mathfrak q_i,\lambda})
			=
			\delta(Q_i\cup\overline Q_i,c)-1,
			\qquad i=1,\ldots,N,
		\]
		and
		\[
			\dbl(A_{\mathfrak q_i,\lambda},
			A_{\mathfrak q_j,\lambda})
			=
			\sum_{A\in\mathfrak q_i,\,B\in\mathfrak q_j}
			(A\cdot B)_c
			=
			2(Q_i\cdot Q_j)_c+2m_im_j,
			\qquad 1\le i<j\le N.
		\]
		Put $A_{\mathcal P,\lambda}
		=\bigcup_{i=1}^NA_{\mathfrak q_i,\lambda}$ and
		$C_{\mathcal P}=\bigcup_{i=1}^N(Q_i\cup\overline Q_i)$.  Then
		\[
			\dbl(A_{\mathcal P,\lambda})
			=
			\delta(C_{\mathcal P},c)-N.
		\]
	\end{lemma}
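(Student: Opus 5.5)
The plan is to reduce everything to one planar model in the chart $\chi_L=H_L\circ\vartheta$ and to repeat, for pairs of distinct circles, the sector-and-rescaling argument of the proof of \Cref{prop:direct-root-of-unity-double-count}. Since $\rho_i(\lambda)^{m_i}=\lambda^M$ for every $i$, \Cref{eq:conjugate-alpha-zr-reduction} gives
\[
\chi_L\circ\alpha^{\mathrm{cp}}_{\boldsymbol\gamma_i,\rho_i(\lambda)}=L\circ z^{(i)}_{\lambda},
\qquad
L(z)=(\lambda^Mz,\lambda^M\overline z),
\]
where $z^{(i)}_\lambda(\theta)=e^{im_i\theta}+\sum_{n>m_i}\overline{a_{i,n}}\lambda^{M(n-m_i)/m_i}e^{-in\theta}$. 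The key point is that the real linear map $L$ is the \emph{same} for all $i$. This is why the radii are tied to the common parameter. By \Cref{rem:linear-reduction-to-zr}, all coincidences, transversality and values of $\dbl$, both within one circle and between two circles, can therefore be read off from the maps $z^{(i)}_\lambda$ into a single copy of $\C$.

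The self-intersection statement is \Cref{thm:distinct-tangent-conjugate-pair-predivide} applied with $r=\rho_i(\lambda)$. The disk $D$ comes from \Cref{eq:conjugate-alpha-uniform-smallness}, after shrinking $\lambda_0$ so that all circles lie in the interior of a small disk in $W\cap U_\R$.

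For $i\ne j$, I will count the solutions of $z^{(i)}_\lambda(\theta)=z^{(j)}_\lambda(\eta)$ on the torus.

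\emph{Localization.} To leading order the equation is $e^{im_i\theta}=e^{im_j\eta}$. Put $\psi=m_i\theta$. For each of the $m_i$ choices of $\theta$ with $m_i\theta\equiv\psi$ and the $m_j$ choices of $\eta$ with $m_j\eta\equiv\psi$, we get one of $m_im_j$ strands $\eta=\eta_k(\theta)$. Every solution lies in a thin neighbourhood of one strand, by the same uniform estimate as in the first part of the proof of \Cref{prop:direct-root-of-unity-double-count}. Each strand corresponds to a pair of Puiseux roots $y_i(x)$ of $Q_i$ and $y_j(x)$ of $Q_j$ in the variable $x=p$, where $x^{1/m_i}$ and $x^{1/m_j}$ are evaluated at the matched angles.

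\emph{Rescaling.} Write the transverse displacement as $h=\lambda^{M\nu}x$, where $\nu$ is the $p$-order of $y_i-y_j$ minus $1$; this order is finite because $Q_i\ne Q_j$. The rescaled equation converges in $C^1$ to a limit of the form
\[
-i\,m\,x\,e^{i\psi}+c\,e^{-iN\psi}=0,
\qquad c\ne0,
\]
where $c$ is built from the leading coefficient of $\overline{y_i-y_j}$. Here $N$ is the exponent, in the common angular variable, of that conjugated leading term. Exactly as in Steps~3 and~4 of that proof, this limit has $2(1+N')$ nondegenerate zeros, with $N'$ proportional to the contact order. The zeros persist under the $C^1$ perturbation, and nondegeneracy gives distinct tangent lines at every coincidence, so all multiple points, including those where three or more arcs meet, are ordinary.

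\emph{Summation.} Summing over the $m_im_j$ strands should give $2\bigl(m_im_j+\sum_{k}\ord_p(y_i-y_{j,k})\bigr)$ ordered solutions, where $k$ runs over the Puiseux roots $y_{j,k}$ of $Q_j$. Since each cross pair is counted once per ordered pair $(\theta,\eta)$, this gives
\[
\dbl(A_{\mathfrak q_i,\lambda},A_{\mathfrak q_j,\lambda})=2(Q_i\cdot Q_j)_c+2m_im_j .
\]
This is where I use $(Q_i\cdot Q_j)_c=\sum\ord_p(y_i-y_j)$ over root pairs, together with $(Q_i\cdot\overline Q_j)_c=(\overline Q_i\cdot Q_j)_c=m_im_j$ (the tangents $L\ne\overline L$) and $(\overline Q_i\cdot\overline Q_j)_c=(Q_i\cdot Q_j)_c$. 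The final formula then follows from \Cref{eq:delta-branch-formula}:
\[
\sum_i\bigl(\delta(Q_i\cup\overline Q_i,c)-1\bigr)+\sum_{i<j}\sum_{A\in\mathfrak q_i,\,B\in\mathfrak q_j}(A\cdot B)_c=\delta(C_{\mathcal P},c)-N .
\]

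The main obstacle is the bookkeeping in the rescaling step. When $m_i\ne m_j$, the natural angular variable on a strand is $\psi/\!\operatorname{lcm}$-type, and the exponents of $\lambda$ in the two Puiseux series must be compared on the common scale $\lambda^{M}$. Because $M$ is a common multiple of $m_1,\dots,m_N$, all exponents $M n/m_i$ are integers, which makes this comparison possible. The delicate point is to verify that the winding number of the limiting equation on each strand is exactly $1$ plus the contact order, so that the total over strands reproduces the intersection number. I would first check this on the case $m_i=m_j=1$, where the strands reduce to $\eta=\theta$ and the computation is that of the $\zeta$-sector with $\kappa$ replaced by the contact order. Then I would reduce the general case to it by pulling back along $u\mapsto u^{M/m_i}$.
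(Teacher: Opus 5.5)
Your framework matches the paper's. The common scale $\lambda^M$ puts all circles in one planar model. Self-intersections come from \Cref{thm:distinct-tangent-conjugate-pair-predivide}. Cross-intersections are localized near $\{e^{im_i\theta}=e^{im_j\eta}\}$, rescaled transversally by the contact-order power of $\lambda$, and counted as nondegenerate zeros of a limiting equation. The last formula then follows from \Cref{eq:delta-branch-formula}.

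The gap is in the counting step you flag as the main obstacle, and as written it fails. Your $m_im_j$ strands over the circle $\psi=m_i\theta$ are not closed curves. When $\psi$ increases by $2\pi$, $\theta$ moves by $2\pi/m_i$ and $\eta$ by $2\pi/m_j$, so the strands are permuted in cycles of length $\ell=\operatorname{lcm}(m_i,m_j)$ as soon as $m_im_j>1$. Correspondingly, the exponent $N'=\ord_p(y_{i,a}-y_{j,b})$ in your leading term is in general a fraction with denominator dividing $\ell$. So $e^{-iN'\psi}$ is not single valued along a strand. The count ``$2(1+N')$ zeros per strand'' is then not an integer in general and depends on where you cut the strand; it is only the average over the $\ell$ strands of a closed component. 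No winding-number argument can be run strand by strand. Separately, your displayed $\sum_k\ord_p(y_i-y_{j,k})$, taken over the roots of $Q_j$ alone, equals $(Q_i\cdot Q_j)_c/m_i$. You need the sum over all $m_im_j$ root pairs.

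What is missing is to count on the closed components of $\{e^{im_i\theta}=e^{im_j\eta}\}$. The paper uses that there are exactly $d=\gcd(m_i,m_j)$ of them. Each is parametrized once by $t\in\R/2\pi\Z$ via the normalization $u=\tau^{\ell/m_i}$, $w=\zeta_\nu\tau^{\ell/m_j}$ of $u^{m_i}=w^{m_j}$, with $\tau=\lambda^{M/\ell}e^{it}$; this is where $\ell\mid M$ enters. Put $k_\nu=\ord_\tau\bigl(\varphi_i(u_\nu(\tau))-\varphi_j(w_\nu(\tau))\bigr)>\ell$. Tubular coordinates and the rescaling $h=\lambda^{M(k_\nu/\ell-1)}x$ give the single-valued limit $-ixe^{i\ell t}+b_\nu e^{-ik_\nu t}$. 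It has exactly $2(\ell+k_\nu)$ nondegenerate zeros, and $\sum_\nu k_\nu=(Q_i\cdot Q_j)_c$ together with $d\ell=m_im_j$ gives the formula.

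Your pullback idea can also be made to work, but it does not reduce to $m_i=m_j=1$.
\begin{itemize}
\item Precompose with $\phi\mapsto(M/m_i)\phi$ and $\phi'\mapsto(M/m_j)\phi'$. Both maps become maps of the form \Cref{eq:conjugate-zr-definition} with $m=M$ and $r=\lambda$, coming from non-primitive parametrizations.
\item The leading locus is then the $M$ closed graphs $\phi'=\phi+2\pi k/M$.
\item The sector analysis of \Cref{prop:direct-root-of-unity-double-count}, now including $k=0$, gives $2(M+K_k)$ zeros on each, with $K_k=\ord_w\bigl(\varphi_i(w^{M/m_i})-\varphi_j((e^{2\pi ik/M}w)^{M/m_j})\bigr)$.
\item One checks $\sum_kK_k=M^2(Q_i\cdot Q_j)_c/(m_im_j)$, and the total must be divided by the covering degree $M^2/(m_im_j)$.
\end{itemize}
Either way, the argument closes only once you count per closed component, not per strand.
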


	\begin{proof}
		Fix a neighborhood $W$ of $c$ in $X_\R$.  We use the adapted local
		data fixed in the statement. That is, we work in the coordinates 
		$\chi_L$ and write
		$\varphi_i=\varphi_{\mathfrak q_i}$ and
		$a_{i,n}=a_{\mathfrak q_i,n}$.
		Since $H_L$ is complex linear and
		$\rho_i(\lambda)^{m_i}=\lambda^M$, using the radius
		$\rho_i(\lambda)$ in
		\Cref{eq:conjugate-pair-circle-map} gives
		\[
			\chi_L(A_{\mathfrak q_i,\lambda})
			=
			\bigl\{
			\lambda^M
			\bigl(z_{i,\lambda}(\theta),
			\overline{z_{i,\lambda}(\theta)}\bigr):
			\theta\in\R/2\pi\Z
			\bigr\},
		\]
		where
		\begin{equation}
			\label{eq:common-tangent-normalized-circle}
			z_{i,\lambda}(\theta)
			=
			e^{im_i\theta}
			+
			\sum_{n>m_i}
			\overline{a_{i,n}}\,
			\rho_i(\lambda)^{\,n-m_i}e^{-in\theta}.
		\end{equation}
		Since $L \ne \overline L$,
		\Cref{thm:distinct-tangent-conjugate-pair-predivide} shows that every
		circle map is an immersion, that every multiple point on one circle is
		ordinary, and that
		\[
			\dbl(A_{\mathfrak q_i,\lambda})
			=
			\delta(Q_i\cup\overline Q_i,c)-1.
		\]

		Fix $i < j$.  Since $\lambda>0$ and
		$z\mapsto(z,\overline z)$ is injective, the points with parameters $\theta$
		and $\psi$ have the same image precisely when $
			z_{i,\lambda}(\theta)-z_{j,\lambda}(\psi)=0$.
		Put
		\[
			d=\gcd(m_i,m_j),
			\qquad
			\ell=\operatorname{lcm}(m_i,m_j)
			=\frac{m_im_j}{d}.
		\]
		Since $M$ is divisible by $m_i$ and $m_j$, it is divisible by
		$\ell$, so $M/\ell$ is an integer.

		The equation $u^{m_i}=w^{m_j}$, which equates the first coordinates of
		$Q_i$ and $Q_j$, defines a plane curve germ in the
		space with coordinates $(u,w)$.  It has $d$ irreducible components,
		indexed by $\nu = 1, \ldots, d$.  For each $\nu$, choose $\zeta_\nu$ with
		$\zeta_\nu^{m_j}=1$ such that the normalization of the component indexed by $\nu$ is
		parametrized by
		\begin{equation}
			\label{eq:common-tangent-component-parametrization}
			u=u_\nu(\tau)=\tau^{\ell/m_i},
			\qquad
			w=w_\nu(\tau)=\zeta_\nu\tau^{\ell/m_j}.
		\end{equation}
		For every $\nu$, the function
		$\varphi_i(u_\nu(\tau))-\varphi_j(w_\nu(\tau))$ is not identically zero.
		Otherwise the two parametrizations obtained from this component would
		parametrize the same germ,
		contrary to $Q_i\ne Q_j$.  Put
		\[
			k_\nu
			=
			\ord_\tau\!
			\left(
				\varphi_i(u_\nu(\tau))
				-
				\varphi_j(w_\nu(\tau))
			\right).
		\]
		The formula for intersection multiplicity in terms of parametrizations
		computes $(Q_i\cdot Q_j)_c$ by
		equating the first coordinates and summing (over the normalized 
		components)
		the orders of the differences of the second coordinates
		\cite[Section~4.1]{Wall2004SingularPoints}.  Therefore
		\begin{equation}
			\label{eq:common-tangent-contact-sum}
			\sum_{\nu=1}^{d}k_\nu=(Q_i\cdot Q_j)_c.
		\end{equation}
		For every $\nu$, one has $k_\nu>\ell$.
		Indeed observe that $\ord_\tau u_\nu=\ell/m_i$ and
		$\ord_\tau w_\nu=\ell/m_j$.  Since all exponents of $\varphi_i$
		are bigger than $m_i$ and all exponents of $\varphi_j$ are bigger than 
		$m_j$, we 
		have
		$k_\nu > \ell$.
		Set
		\[
			q_\nu=\frac{k_\nu}{\ell}-1>0.
		\]
		Then $Mq_\nu = (M/\ell)k_\nu-M$ is a positive integer.  For
		$\tau=\lambda^{M/\ell}e^{it}$, the difference of the terms involving
		$\varphi_i$ and $\varphi_j$ in the two functions
		$z_{i,\lambda}$ and $z_{j, \lambda}$ is
		\[
			\lambda^{-M}
			\overline{
			\varphi_i(u_\nu(\tau))-\varphi_j(w_\nu(\tau))
			}.
		\]
		This expression has order
		$\lambda^{M(k_\nu/\ell-1)}=\lambda^{Mq_\nu}$.

		On the parameter torus, the equation
		$e^{im_i\theta}=e^{im_j\psi}$
		has $d$ connected components, each an embedded circle.  These circles are
		obtained from the normalized components of $u^{m_i}=w^{m_j}$.  For each $\nu$,
		\[
			\gcd(\ell/m_i,\ell/m_j)=1.
		\]
		Hence substitution of
		\[
			\tau=\lambda^{M/\ell}e^{it},
			\quad\enspace
			t\in\R/2\pi\Z,
		\]
		in \Cref{eq:common-tangent-component-parametrization} parametrizes the circle
		obtained from this component exactly once.  The function
		$z_{i, \lambda}(\theta)-z_{j, \lambda}(\psi)$ converges uniformly on the
		 torus to the function
		$e^{im_i\theta}-e^{im_j\psi}$.  Choose
		disjoint tubular neighborhoods of the $d$ circles.  The former
		difference of exponentials is bounded away from zero on the compact 
		complement of the chosen tubular neighborhoods.  Hence, for
	   $\lambda > 0$ small, the  map 
		$e^{im_i\theta}-e^{im_j\psi}$ has no zero there.

  Define the maps
 $\theta_\nu(t),\psi_\nu(t):\R/2\pi\Z \to \R/2\pi\Z$ by the expressions
 \[
 \begin{aligned}
 	u_\nu(\lambda^{M/\ell}e^{it})
 	&=\rho_i(\lambda)e^{i\theta_\nu(t)},\\
 	w_\nu(\lambda^{M/\ell}e^{it})
 	&=\rho_j(\lambda)e^{i\psi_\nu(t)}.
 \end{aligned}
 \]
		So $
			e^{im_i\theta_\nu(t)}
			=
			e^{im_j\psi_\nu(t)}
			=
			e^{i\ell t}$.
		Consider coordinates $(t,h)$ given by
		\[
			\theta=\theta_\nu(t),
			\qquad
			\psi=\psi_\nu(t)+\frac{h}{m_j}.
		\]
		Then $e^{ih}=e^{i(m_j\psi-m_i\theta)}$,
		and the circle is given by $h=0$.  In these coordinates,
		$u = u_\nu(\tau)$ and $w=w_\nu(\tau)e^{ih/m_j}$.  By
		\Cref{eq:common-tangent-normalized-circle},
		\[
		\begin{aligned}
			z_{i,\lambda}(\theta)-z_{j,\lambda}(\psi)
			={}&e^{i\ell t}(1-e^{ih})\\
			&+\lambda^{-M}
			\overline{
			\varphi_i(u_\nu(\tau))
			-
			\varphi_j(w_\nu(\tau)e^{ih/m_j})
			}.
		\end{aligned}
		\]
		Since
		\[
			e^{i\ell t}(1-e^{ih})=-ih e^{i\ell t}+O(h^2),
		\]
		Taylor expansion in $h$ of the term containing $\varphi_i$ and
		$\varphi_j$, together with the definition of $k_\nu$, gives a nonzero coefficient
		$b_\nu$ and
		some $\epsilon>0$ such that, uniformly in $t$ and for small $h$,
		\begin{equation}
			\label{eq:common-tangent-tubular-expansion}
			\begin{aligned}
			z_{i,\lambda}(\theta)-z_{j,\lambda}(\psi)
			={}&-ih e^{i\ell t}
			+b_\nu\lambda^{Mq_\nu}e^{-ik_\nu t}\\
			&+O(h^2)+O(\lambda^\epsilon|h|)
			+O(\lambda^{Mq_\nu+M/\ell}).
			\end{aligned}
		\end{equation}
		The term
		$b_\nu\lambda^{Mq_\nu}e^{-ik_\nu t}$ above is the conjugate of the first
		nonzero term of
		$\varphi_i(u_\nu(\tau))-\varphi_j(w_\nu(\tau))$ (normalized by
		$\lambda^M$).  The error
		$O(\lambda^{Mq_\nu+M/\ell})$ comes from the higher powers of $\tau$.
		Finally, the estimate $O(\lambda^\epsilon|h|)$ above follows because
		every
		exponent of $\varphi_j$ is bigger than $m_j$.  So, after maybe 
		shrinking the
		tubular neighborhoods, and taking
		$\lambda>0$ small enough, we have that 
		\[
			|-ih e^{i\ell t} +O(h^2)+O(\lambda^\epsilon|h|)| \geq |h|/2.
		\]
  Hence, every zero of $z_{i,\lambda}(\theta)-z_{j,\lambda}(\psi)$, satisfies
		\begin{equation}\label{eq:common-tangent-displacement-bound}
			|h|\le C\lambda^{Mq_\nu}.
		\end{equation}

		Put $h=\lambda^{Mq_\nu}x$ and divide
		\Cref{eq:common-tangent-tubular-expansion} by
		$\lambda^{Mq_\nu}$. Since the series that define $z_{i,\lambda}$ and 
		$z_{j,\lambda}$ in
		\Cref{eq:common-tangent-normalized-circle} are convergent, we can 
		differentiate them term by term.  Hence, for every $R>0$, the maps
		\[
			(t,x)\longmapsto
			\lambda^{-Mq_\nu}
			\left(
			z_{i,\lambda}(\theta_\nu(t))
			-
			z_{j,\lambda}\!\left(
			\psi_\nu(t)+\frac{\lambda^{Mq_\nu}x}{m_j}
			\right)
			\right)
		\]
		converge, in $C^1$, on the cylinder
		$(\R/2\pi\Z)\times[-R,R]$ to
		\[
			G_\nu(t,x)
			=
			-ix e^{i\ell t}
			+
			b_\nu e^{-ik_\nu t}.
		\]
		On this cylinder, the three rescaled error terms in
		\Cref{eq:common-tangent-tubular-expansion} are respectively
		$
			O(\lambda^{Mq_\nu}),
			O(\lambda^\epsilon)$ and
			$O(\lambda^{M/\ell})$, 
		with the same bounds after one derivative.  At a zero of $G_\nu$, we 
		have
		\[
			x=-ib_\nu e^{-i(\ell+k_\nu)t}.
		\]
		Since $x$ is real, there are exactly $2(\ell+k_\nu)$ such zeros,
		and each has $|x|=|b_\nu|>0$.  They are nondegenerate because, at each
		zero,
		\[
			\partial_xG_\nu=-ie^{i\ell t},
			\qquad
			\partial_tG_\nu=(\ell+k_\nu)xe^{i\ell t},
		\]
		and these two vectors are real linearly independent.  Choose $R$ larger
		than the bound for $|x|$ obtained from
		\Cref{eq:common-tangent-displacement-bound} and larger than
		$|b_\nu|$.  On the complement of disjoint small neighborhoods of the
		zeros of $G_\nu$ in this compact cylinder, $|G_\nu|$ has a positive
		minimum.  The $C^1$ convergence and the implicit function theorem give
		exactly one zero near each zero of $G_\nu$ and none on the complement.
		By $C^1$ convergence, the derivative of the rescaled difference map has
		full real rank at these zeros whenever $\lambda>0$ is small enough.  The
		tubular coordinates $(t,h)$ and the rescaling
		$h=\lambda^{Mq_\nu}x$ are nonsingular for $\lambda>0$.  The two
		columns of the derivative of the original difference map are the tangent
		vectors of the two circle maps, with the second taken with the opposite sign.
		Hence the intersection determined by this zero is transverse.

		Summing over the $d$ components and using
		\Cref{eq:common-tangent-contact-sum} gives
		\[
			\sum_{\nu=1}^{d}2(\ell+k_\nu)
			=
			2m_im_j+2(Q_i\cdot Q_j)_c
		\]
		pairs of parameters contributing to
		$\dbl(A_{\mathfrak q_i,\lambda},A_{\mathfrak q_j,\lambda})$, and all
		these intersections are transverse.  Since
		$L\ne\overline L$, $Q_i$ and $\overline Q_j$ have distinct tangent
		lines, as do $\overline Q_i$ and $Q_j$.  Both intersection
		multiplicities are therefore $m_im_j$
		\cite[Lemma~4.4.1]{Wall2004SingularPoints}.  Conjugation gives
		$(Q_i\cdot Q_j)_c
		=(\overline Q_i\cdot\overline Q_j)_c$.  Summing the intersection
		multiplicities of the four pairs of branches gives
		\[
			\sum_{A\in\mathfrak q_i,\,B\in\mathfrak q_j}
			(A\cdot B)_c
			=
			2m_im_j+2(Q_i\cdot Q_j)_c.
		\]
		Summing $\dbl(A_{\mathfrak q_i,\lambda})$ over $i$ and
		$\dbl(A_{\mathfrak q_i,\lambda},A_{\mathfrak q_j,\lambda})$ over
		$i < j$ and applying \Cref{eq:delta-branch-formula} gives
		\[
			\dbl(A_{\mathcal P,\lambda})
			=
			\delta(C_{\mathcal P},c)-N.
		\]
		Since there are finitely many choices of $i<j$, decrease the bound on
		$\lambda$ so that
		\[
			\dbl(A_{\mathfrak q_i,\lambda},A_{\mathfrak q_j,\lambda})
			=
			2(Q_i\cdot Q_j)_c+2m_im_j
		\]
		for every $i<j$, and every intersection between two distinct circle maps
		is transverse.  Each
		circle map is an immersion. By 
		\Cref{thm:distinct-tangent-conjugate-pair-predivide}, if two
		different parameters with the same image, lie on one circle, their 
		tangent lines are distinct.  If they lie on
		different circles, their tangent lines are distinct because the zero of the
		difference map at their parameters is nondegenerate.  So we can 
		conclude that every multiple point of the
		disjoint union is ordinary.

		Choose a closed real disk $D\subset W\cap U_\R$ with
		$c\in\operatorname{int}D$.  The images of the circle maps converge uniformly
		to $c$.  Hence there is $\lambda_0>0$ such that, whenever
		$0<\lambda<\lambda_0$, each circle map is an immersion with ordinary
		multiple points, the formula for
		$\dbl(A_{\mathfrak q_i,\lambda},A_{\mathfrak q_j,\lambda})$ holds for
		every $i<j$,
		every intersection between the images of two distinct circles is transverse, and all
		the images lie in
		$\operatorname{int}D$.  The parameter
		circles have no boundary.  Therefore the
		boundary conditions in \Cref{def:predivide} hold, and the disjoint union of the
		circle maps parametrizes a predivide in $D$.
	\end{proof}

	\begin{lemma}
		\label{thm:distinct-tangent-pair-families}
		Let $X$ be a smooth complex surface with real structure $\sigma_X$. Let
		$c\in X_\R$, and let $\mathcal P$ be a nonempty finite set of conjugate
		pairs of branch germs at $c$.  Partition
		$\mathcal P=\mathcal P_1\sqcup\cdots\sqcup\mathcal P_s$, $s \ge 1$,
		into nonempty subsets so that the pairs in $\mathcal P_i$ have a common
		unordered pair of distinct tangent lines
		$\{T_i,\overline T_i\}$, where $T_i\ne\overline T_i$ and such that these
		unordered pairs are distinct for different $i$.  Label every
		$\mathfrak q=\{Q,\overline Q\}\in\mathcal P_i$ so that $T_cQ = T_i$.
		Choose adapted local data for
		$(X, c, \{T_1, \ldots, T_s\}, \mathcal P)$ and a positive integer $M$
		divisible by all the multiplicities $m_{\mathfrak q}$.
		Let $\mathcal I$ be a finite, possibly empty, index set.  For
		$\nu\in\mathcal I$, let
		\[
			\gamma_\nu:(\R,0)\longrightarrow(X_\R,c)
		\]
		be a real analytic map germ with
		$\gamma_\nu(0)=c$ and $\gamma_\nu'(0)\ne0$, and denote the germ of its
		image by $\Gamma_\nu$.  Assume that the $\Gamma_\nu$ are 
		transverse, i.e. they have pairwise
		distinct tangent lines, at $c$.

		Then there are positive numbers $\kappa_1,\ldots,\kappa_s$ such that,
		for every neighborhood $W$ of $c$ in $X_\R$, there are
		$\tau_0 > 0$ and a closed real disk $D\subset W\cap U_\R$, with
		$c\in\operatorname{int}D$, such that each $\Gamma_\nu$ has an
		embedded compact interval representating it in $D$ (that we  denote by 
		the same symbol), and
		these representatives meet pairwise only at $c$.  Whenever
		$0<\tau<\tau_0$, put
		\[
			A_{\mathfrak q,\tau}
			=
			\alpha^{\mathrm{cp}}_{\boldsymbol\gamma_{\mathfrak q},
			\kappa_i^{1/m_{\mathfrak q}}\tau^{M/m_{\mathfrak q}}}
			(\R/2\pi\Z)
			\subset\operatorname{int}D
			\qquad
			(\mathfrak q\in\mathcal P_i).
		\]
		The corresponding circle maps and the inclusions
		$\Gamma_\nu\hookrightarrow D$, $\nu\in\mathcal I$, jointly
		parametrize a predivide in $D$. If we denote $C_{\mathfrak 
		q}=Q\cup\overline Q$ for
		$\mathfrak q = \{Q, \overline Q\} \in \mathcal P$.  Then
		$c\notin A_{\mathfrak q,\tau}$ for every
		$\mathfrak q\in\mathcal P$, and
		\[
		\begin{aligned}
			\dbl(A_{\mathfrak q,\tau})
			&=\delta(C_{\mathfrak q},c)-1
			&&(\mathfrak q\in\mathcal P),\\
			\dbl(A_{\mathfrak q,\tau},A_{\mathfrak p,\tau})
			&=\sum_{A\in\mathfrak q,\,B\in\mathfrak p}(A\cdot B)_c
			&&(\mathfrak q,\mathfrak p\in\mathcal P,\
			\mathfrak q\ne\mathfrak p),\\
			\dbl(A_{\mathfrak q,\tau},A_{\mathfrak p,\tau})
			&=4m_{\mathfrak q}m_{\mathfrak p}
			&&(\mathfrak q\in\mathcal P_i,\
			\mathfrak p\in\mathcal P_j,\ i\ne j),\\
			\dbl(\Gamma_\nu,A_{\mathfrak q,\tau})
			&=2m_{\mathfrak q}
			=\mt(C_{\mathfrak q},c)
			&&(\nu\in\mathcal I,\ \mathfrak q\in\mathcal P).
		\end{aligned}
		\]
		Put $A_{\mathcal P,\tau}
		=\bigcup_{\mathfrak q\in\mathcal P}A_{\mathfrak q,\tau}$ and
		$C_{\mathcal P}
		=\bigcup_{\mathfrak q\in\mathcal P}C_{\mathfrak q}$.
		With respect to the disjoint union of the circle maps,
		\[
			\dbl(A_{\mathcal P,\tau})
			=
			\delta(C_{\mathcal P},c)-|\mathcal P|.
		\]
	\end{lemma}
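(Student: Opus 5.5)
Within each class $\mathcal P_i$ I would apply \Cref{thm:common-tangent-annuli} directly. The only difference from that lemma is the extra constant $\kappa_i$: the radius becomes $\kappa_i^{1/m_{\mathfrak q}}\tau^{M/m_{\mathfrak q}}$. Setting $\lambda=\kappa_i^{1/M}\tau$ turns this into $\rho_{\mathfrak q}(\lambda)=\lambda^{M/m_{\mathfrak q}}$, so for $\tau$ small \Cref{thm:common-tangent-annuli} gives three things at once:
\begin{itemize}
\item each circle is an immersion with ordinary multiple points and $\dbl(A_{\mathfrak q,\tau})=\delta(C_{\mathfrak q},c)-1$;
\item the intra-class counts are $2(Q\cdot P)_c+2m_{\mathfrak q}m_{\mathfrak p}$, all transverse;
\item by \Cref{eq:delta-branch-formula}, this equals $\sum_{A\in\mathfrak q,B\in\mathfrak p}(A\cdot B)_c$.
\end{itemize}
Each circle also satisfies $c\notin A_{\mathfrak q,\tau}$. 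In the chart $\chi_{T_i}$ the circle is $\lambda^M(z,\overline z)$ with $z=z_{\mathfrak q,\lambda}(\theta)$, and $|z_{\mathfrak q,\lambda}|\ge 1-o(1)$.

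The new content is the interaction between different classes and with the real arcs. I would work in the fixed chart $\vartheta$. There, after dividing by $\tau^M$, the circle for $\mathfrak q\in\mathcal P_i$ converges in $C^1$ to the ellipse $E_i(\theta)=\kappa_i\,\chi_{T_i}^{-1}(e^{im\theta},e^{-im\theta})$, traversed $m_{\mathfrak q}$ times. Each $E_i$ is the image of the unit circle under a real linear map, so it is a centred ellipse. It is determined by $T_i$, and distinct unordered pairs $\{T_i,\overline T_i\}$ give ellipses that are not homothetic. Two centred ellipses related by no homothety meet in exactly $4$ points, transversely, whenever their scales are suitably interleaved. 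A centred ellipse meets a line through the origin in $2$ transverse points.

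This is where I would choose the constants $\kappa_i$. They must be chosen so that, for all $i\ne j$, the ellipses $\kappa_iE_i^{(1)}$ and $\kappa_jE_j^{(1)}$ (unit-scale shapes) cross transversally in exactly $4$ points. If two such shapes are nested, or tangent, at some scaling ratio, I would rescale $\kappa_j$ into the open interval of ratios where they cross. The difficulty is doing this simultaneously for all pairs. I would try to reduce to a common choice: apply a real linear change putting one ellipse as the unit circle and compare axes. If that fails, I would use an argument that the set of good $(\kappa_1,\dots,\kappa_s)$ is a nonempty open semialgebraic set; nonemptiness is the part I am least sure of. I expect this choice to be the main obstacle.

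Once the $\kappa_i$ are fixed, I would handle the counts for $\mathfrak q\in\mathcal P_i$, $\mathfrak p\in\mathcal P_j$ with $i\ne j$. In the torus of parameters, the rescaled difference map converges in $C^1$ to the difference of the limiting ellipse maps. Its zeros are nondegenerate, and there are $4m_{\mathfrak q}m_{\mathfrak p}$ of them, since each of the $4$ crossing points is hit $m_{\mathfrak q}m_{\mathfrak p}$ times. As in Step 4 of \Cref{prop:direct-root-of-unity-double-count}, stability then gives exactly that many transverse intersections. This matches $\sum(A\cdot B)_c=4m_{\mathfrak q}m_{\mathfrak p}$, because all four tangent pairs are distinct (Wall, Lemma 4.4.1).

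For the real arcs, I would rescale $\gamma_\nu(\tau^M y)$. This converges to the line through $0$ in the direction $\gamma_\nu'(0)$, which meets $E_i$ in $2$ transverse points. That gives $\dbl(\Gamma_\nu,A_{\mathfrak q,\tau})=2m_{\mathfrak q}$, with arcs taken on a fixed compact interval $|y|\le R$; outside it, the arc is far from the circles. The intervals $\Gamma_\nu$ in a small disk $D$ meet only at $c$ by transversality, and $c$ lies on no circle. Summing all the counts with \Cref{eq:delta-branch-formula} gives $\dbl(A_{\mathcal P,\tau})=\delta(C_{\mathcal P},c)-|\mathcal P|$. Finally, choose $D$ inside $W\cap U_\R$ and shrink $\tau_0$ so that everything lies in $\operatorname{int}D$, with the interval endpoints on $\partial D$ transverse to it.
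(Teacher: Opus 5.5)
Your outline follows the paper's proof everywhere except at the step you flag yourself, and that step is a genuine gap. You never show that constants $\kappa_1,\dots,\kappa_s$ exist for which \emph{every} pair of limiting ellipses from different classes crosses transversally in exactly four points. Openness of the set of good $(\kappa_1,\dots,\kappa_s)$ is easy. Nonemptiness is the actual content of the lemma's existence claim, and neither ``compare axes after a linear change'' nor the semialgebraic remark supplies it. Without it, two limiting ellipses could be nested or tangent. Then your $C^1$-stability argument yields either no intersections or degenerate ones. So the inter-class count $4m_{\mathfrak q}m_{\mathfrak p}$, the ordinariness of those multiple points (hence the predivide property), and the final formula for $\dbl(A_{\mathcal P,\tau})$ all remain unproved.

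The missing idea is to normalize each ellipse \emph{individually} by its area. Write the $C^1$ limit of $\tau^{-M}\vartheta\circ\alpha^{\mathrm{cp}}_{\boldsymbol\gamma_{\mathfrak q},\tau^{M/m_{\mathfrak q}}}$ as $L_i^0(e^{im_{\mathfrak q}\theta})$, with $L_i^0(z)=H_i^{-1}(z,\overline z)=za_i+\overline z\,\overline{a_i}$ and $\C a_i=d\vartheta_c(T_i)$. Take $\kappa_i=|\det_{\R}L_i^0|^{-1/2}$, so that $L_i=\kappa_iL_i^0$ satisfies $|\det_{\R}L_i|=1$.

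For $i\ne j$, the map $B=L_j^{-1}L_i$ then has $|\det_{\R}B|=1$. It is not orthogonal: $B(z)=e^{i\phi}z$ or $B(z)=e^{i\phi}\overline z$ would force $\C a_i=\C a_j$ or $\C a_i=\C\overline{a_j}$, i.e.\ $\{T_i,\overline T_i\}=\{T_j,\overline T_j\}$. Hence its singular values are $\sigma$ and $\sigma^{-1}$ with $\sigma>1$. In suitable coordinates, the equation $\|B(e^{it})\|^2=\sigma^2\cos^2t+\sigma^{-2}\sin^2t=1$ has exactly four simple solutions, namely $\cos^2t=1/(1+\sigma^2)$. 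So $L_i(S^1)$ and $L_j(S^1)$ meet in exactly four transverse points.

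Geometrically, two distinct centred ellipses of equal area cannot be nested, so they must cross. In your language, the equal-area scale ratio always lies inside the open interval $(\sigma^{-1},\sigma)$ of ``interleaved'' ratios for the pair $(i,j)$. Because the choice is made one ellipse at a time, it works for all pairs simultaneously. With these $\kappa_i$, the rest of your argument (intra-class counts via \Cref{thm:common-tangent-annuli}, the $m_{\mathfrak q}m_{\mathfrak p}$-fold covering, the real arcs, and the final summation) goes through as you wrote it.
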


	\begin{proof}
		We use the adapted local data fixed in the statement (recall definition 
		and notation at the beginning of the section). In particular, we work 
		in the 
		coordinates
		$\vartheta$, we identify $X_\R$ with $\R^2$, and we write
		$H_i=H_{T_i}$.  Put
		\[
			S_\tau=\tau^M.
		\]
		For $i=1,\ldots,s$, define the real linear isomorphism
		\[
			L_i^0:\C\longrightarrow\R^2,
			\qquad
			L_i^0(z)=H_i^{-1}(z,\overline z).
		\]
		For $\mathfrak q\in\mathcal P_i$, use the parameter
		$\tau^{M/m_{\mathfrak q}}$.  Take formula in
		\Cref{eq:common-tangent-normalized-circle} expressed  in the adapted
		coordinates $\chi_{T_i}=H_i\circ\vartheta$. Then apply
		$H_i^{-1}$ to it. This gives the convergence $
			S_\tau^{-1}
			\vartheta\circ
			\alpha^{\mathrm{cp}}_{\boldsymbol\gamma_{\mathfrak q},
			\tau^{M/m_{\mathfrak q}}}(\theta)
			\longrightarrow
			L_i^0(e^{im_{\mathfrak q}\theta}).
		$
		This is a convergence in $C^1$, as $\tau\to0^+$.  Put
		$\kappa_i=|\det_{\R}L_i^0|^{-1/2}$ and
		$L_i=\kappa_iL_i^0$.  Thus
		$|\det_{\R}L_i|=1$.  Apply
		\Cref{thm:common-tangent-annuli} to
		$\mathcal P_i$, using the same integer $M$ and the parameter
		$\mu=\kappa_i^{1/M}\tau$.  Since
		$\mu^M=\kappa_iS_\tau$, the radius of the parameter circle for
		$\mathfrak q\in\mathcal P_i$ is then
		\[
			\kappa_i^{1/m_{\mathfrak q}}\tau^{M/m_{\mathfrak q}}
		\]
		and, after division by $S_\tau$, the circle map converges in $C^1$ to
		$\theta\mapsto L_i(e^{im_{\mathfrak q}\theta})$.
		\Cref{thm:common-tangent-annuli} also computes
		$\dbl(A_{\mathfrak q, \tau})$ and
		$\dbl(A_{\mathfrak q,\tau},A_{\mathfrak p,\tau})$ when
		$\mathfrak q,\mathfrak p\in\mathcal P_i$.

		Denote $a_i=H_i^{-1}(1,0)$.  Then, compatibility with the real 
		structures gives
		\[
			L_i^0(z)=za_i+\overline z\,\overline{a_i},
			\qquad
			\C a_i=d\vartheta_c(T_i).
		\]
		Fix $i \ne j$ and put $B=L_j^{-1}L_i$.  Then
		$|\det_{\R}B|=1$, and
		$B$ is not orthogonal.  Indeed, an orthogonal real map of $\C$ has the
		form $z\mapsto e^{i\phi}z$ or $z\mapsto e^{i\phi}\overline z$.  Either
		form would give
		$\{T_i, \overline T_i\} = \{T_j, \overline T_j\}$.  The singular values of
		$B$ are therefore $\sigma$ and $\sigma^{-1}$ for some $\sigma>1$.
		After an orthogonal change of coordinates,
		\[
			\|B(e^{it})\|^2
			=
			\sigma^2\cos^2t+\sigma^{-2}\sin^2t.
		\]
		So the equation $\|B(e^{it})\|=1$ has four solutions.  
		In other words, the
		ellipses $L_i(S^1)$ and $L_j(S^1)$ meet in four points transversely.  If
		$\mathfrak q \in \mathcal P_i$ and $\mathfrak p\in\mathcal P_j$, each of
		these points has $m_{\mathfrak q}m_{\mathfrak p}$ preimages in the product
		of the two parameter circles.  Hence the map
		\[
			(\theta,\eta)\longmapsto
			L_i(e^{im_{\mathfrak q}\theta})
			-
			L_j(e^{im_{\mathfrak p}\eta})
		\]
		has $4m_{\mathfrak q}m_{\mathfrak p}$ nondegenerate zeros.   The norm 
		of the map above is
		bounded away from zero outside a small neighborhood of these zeros.  
		So 
		the implicit function theorem yields
		$4m_{\mathfrak q}m_{\mathfrak p}$ nondegenerate zeros for every sufficiently
		small $\tau>0$, that is,  $\dbl(A_{\mathfrak q,\tau},A_{\mathfrak 
		p,\tau})
		=4m_{\mathfrak q}m_{\mathfrak p}$.
		Each of the four pairs of branches
		in $\mathfrak q\times\mathfrak p$ has distinct
		tangent lines, so its intersection multiplicity is
		$m_{\mathfrak q}m_{\mathfrak p}$.  Therefore
		\[
			\sum_{A\in\mathfrak q,\,B\in\mathfrak p}
			(A\cdot B)_c
			=
			4m_{\mathfrak q}m_{\mathfrak p}.
		\]
		Fix $\mathfrak q\in\mathcal P_i$.
		The ellipse $L_i(S^1)$ does not contain the origin.  Hence there are constants
		$a,b>0$ such that
		\[
			aS_\tau\le |y-c|\le bS_\tau
			\qquad
			(y\in A_{\mathfrak q,\tau}).
		\]
		Thus $c\notin A_{\mathfrak q,\tau}$. 
		For $\nu\in\mathcal I$, put $v_\nu=\gamma_\nu'(0)$.
		Since
		\[
			\gamma_\nu(u)
			=
			c+uv_\nu+O(u^2)
		\]
		and $v_\nu\ne0$, after decreasing the interval on which
		$\gamma_\nu$ is defined there is $a_\nu>0$ such that
		\[
			|\gamma_\nu(u)-c|
			\ge
			a_\nu|u|.
		\]
		Hence every intersection with $A_{\mathfrak q,\tau}$ has
		$|u|=O(S_\tau)$.  Set $u=S_\tau x$.  On a compact interval containing
		all the resulting values of $x$,
		\[
			S_\tau^{-1}
			\bigl(\gamma_\nu(S_\tau x)-c\bigr)
			=
			xv_\nu+O(S_\tau x^2)
			\longrightarrow
			xv_\nu
		\]
		in $C^1$.  The line $\R v_\nu$ meets $L_i(S^1)$ twice and
		transversely.  Since
		$\theta\mapsto L_i(e^{im_{\mathfrak q}\theta})$ covers the ellipse
		$m_{\mathfrak q}$ times, the map
		\[
			(x,\theta)\longmapsto
			xv_\nu-L_i(e^{im_{\mathfrak q}\theta})
		\]
		has $2m_{\mathfrak q}$ nondegenerate zeros.  Convergence in $C^1$
		and the implicit function theorem give exactly
		$2m_{\mathfrak q}$ nondegenerate zeros for small
		$\tau>0$.  Hence
		\[
			\dbl(\Gamma_\nu,A_{\mathfrak q,\tau})
			=
			2m_{\mathfrak q},
		\]
		and, moreover, all these intersections are transverse.
		Since $C_{\mathfrak q}$ is the union of two branches of
		multiplicity $m_{\mathfrak q}$, additivity of multiplicity gives
		$\mt(C_{\mathfrak q},c)=2m_{\mathfrak q}$.

		After (possibly) shrinking the intervals on which the maps $\gamma_\nu$ 
		are
		defined, each map is an embedding.  Since the tangent lines are pairwise
		distinct, the arcs meet transversely only
		at $c$ near $c$. Fix a neighborhood $W$ of $c$.  Choose a closed
		real disk $D\subset W\cap U_\R$ with $c\in\operatorname{int}D$. Assume 
		that $D$ is small enough so  that each image meets $D$ in one embedded 
		interval and meets
		$\partial D$ transversely at its two endpoints.  Denote these intervals
		by $\Gamma_\nu$.

		Since the images of the circle maps shrink to $c$, we can choose
		$\tau_0>0$ so that: the formulas for
		$\dbl(A_{\mathfrak q,\tau})$,
		$\dbl(A_{\mathfrak q,\tau},A_{\mathfrak p,\tau})$ and
		$\dbl(\Gamma_\nu,A_{\mathfrak q,\tau})$ hold, every intersection
		counted by these formulas is transverse, and all the images of the circles lie in
		$\operatorname{int}D$ when $0<\tau<\tau_0$.  On each
		$\mathcal P_i$, we have that
		\Cref{thm:common-tangent-annuli} implies that every multiple point  is 
		ordinary.
		At an intersection of images of circles from different sets $\mathcal P_i$, or
		of the image of a circle and an arc, the two tangent lines are distinct by the
		nondegeneracy of the corresponding zero of the difference map shown 
		before.  At $c$,
		all the arcs have distinct tangents and none of the images of the 
		circles
		contains $c$.
		So every multiple point is ordinary.  And so the circle maps and the 
		inclusions
		$\Gamma_\nu\hookrightarrow D$ jointly parametrize a predivide. Finally, 
		summing the values of
		$\dbl(A_{\mathfrak q, \tau})$ and
		$\dbl(A_{\mathfrak q,\tau},A_{\mathfrak p,\tau})$ for all pairs
		in $\mathcal P$, and applying \Cref{eq:delta-branch-formula}, gives
		\[
			\dbl(A_{\mathcal P,\tau})
			=
			\delta(C_{\mathcal P},c)-|\mathcal P|.
		\]
	\end{proof}

	\begin{theorem}
		\label{thm:global-predivide}
		Every reduced real plane curve germ  $(C,0)$ admits a real
		morsification. More concretely, let $f\in\R\{x,y\}$ be a reduced
		equation of the real plane curve germ
		$(C, 0)$, and let $\mathbb B$ be a sufficiently small Milnor ball for
		$f$, invariant by complex conjugation.  Then there are a disk
		$\Delta_\lambda$, invariant under complex conjugation, and a
		holomorphic function $F$, defined on a neighborhood of
		$\overline{\mathbb B}\times\Delta_\lambda$, such that
		$F(\overline x, \overline y, \overline\lambda)
		=\overline{F(x,y,\lambda)}$.  Moreover, $F(x,y,0)=f(x,y)$, and
		$\{F=0\}$ is flat over $\Delta_\lambda$.
		For every sufficiently small nonzero $\lambda$, the fiber
		$\{F(\cdot,\lambda)=0\}\cap\mathbb B$ has exactly
		$\delta(C, 0)-\imbr(C, 0)$ singular points, all real
		hyperbolic nodes. And so 
		$P_\lambda = \{F(\cdot, \lambda) = 0\}\cap\overline{\mathbb B}_\R$ is a
		divide with
		\[
			\dbl(P_\lambda)=\delta(C,0)-\imbr(C,0).
		\]
		Thus, the function $F$ defines a real morsification of $f$.
	\end{theorem}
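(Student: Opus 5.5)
The plan is to follow the roadmap: build a real parametrized family of curves by descending induction along the real part of an embedded resolution, count its ordinary real multiple points, and then pass from parametrizations to an equation. First fix the real modification $\rho:X_N\to X_0$ of step~\ref{it:global-algorithm-resolution}. It is obtained by blowing up only real infinitely near points, so every exceptional component is real and every intermediate surface $X_k$ carries a real structure. On $X_N$ each real strict transform is a smooth real germ meeting a single exceptional component transversally at a real point away from crossings; fix a real primitive parametrization of it. Each conjugate pair has a last common real infinitely near point $c$, at some stage $X_k$, and there its two strict transforms have distinct conjugate nonreal tangents.

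The construction is by induction from $X_N$ down to $X_0$. The inductive object on $X_k$ is a real family of immersions (real arcs parametrized by intervals, circles parametrized by $\R/2\pi\Z$), depending on one parameter $\lambda$. It must satisfy three conditions for $0<\lambda\ll1$. First, it is a predivide near the exceptional locus. Second, it meets each real exceptional component of $X_k$ transversally, at distinct points off the crossings, with the right number of intersections. Third, it has a recorded value of $\dbl$.

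At a packet point $c$ we apply \Cref{thm:distinct-tangent-pair-families}. The arcs are the images of the current real arcs and of the real exceptional components through $c$, and the circles use radii $\kappa_i^{1/m_{\mathfrak q}}\lambda^{N_c M/m_{\mathfrak q}}$ for an exponent $N_c$ larger than every exponent used so far. Before contracting a component $E$ we translate each local piece by a real vector proportional to $\lambda^{N'}$, with $N'$ larger still. Such a translation preserves the transverse intersections already produced, since those occur at scales $\lambda^{<N'}$ and are stable under $C^1$-small perturbations. For generic direction it makes the intersections with $E$ transverse and distinct.

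Contracting $E$ then turns these transverse points into smooth arcs with pairwise distinct tangents at the image point. Circles crossing $E$ at $2m$ points become arcs through the image point, and this is why, at later packets, each circle is counted with $\dbl(\Gamma_\nu,A)=2m$.

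The counting step accumulates $\dbl$ through the blow-up formulas. For a point of multiplicity $n$ the delta invariant drops by $\binom{n}{2}$, and intersection multiplicities drop by the product of multiplicities. The translation step at each infinitely near point $c$ is what creates the $\binom{\mt_c}{2}$ crossings among real arcs, as in A'Campo's method. The packet lemma contributes $\delta(C_{\mathcal P},c)-|\mathcal P|$ plus the mixed terms $4m m'$ and $2m$, which exactly match the intersection multiplicities of nonreal branches with each other and with the real and exceptional curves at $c$. Summing over all points of the resolution tree then gives $\dbl=\delta(C,0)-\imbr(C,0)$, because each conjugate pair loses exactly one double point, namely the $-1$ in \Cref{thm:distinct-tangent-conjugate-pair-predivide}. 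At the end we perturb generically to split ordinary $k$-fold points into $\binom{k}{2}$ nodes.

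To get an equation, complexify the parametrizations: each real arc is a disk $\gamma(w)$ plus translations, and each circle is the trace map on the annulus $uv=\lambda^{2(\cdot)}$. This gives a real, proper, finite holomorphic map from the disjoint union of disks and annuli over $\Delta_\lambda$ into $\mathbb B\times\Delta_\lambda$. By Remmert its image is analytic; take its reduced real equation $F$. Flatness over $\Delta_\lambda$ holds because the image is a hypersurface with no component in the fiber $\lambda=0$, and $F(\cdot,0)=f$ up to a unit because the central fiber image is $C$.

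For $\lambda\ne0$, the normalization $\widetilde C_\lambda$ has Euler characteristic $\rebr+0\cdot\imbr$, while the Milnor fiber has $\chi=1-\mu=\rebr+2\imbr-2\delta$, and both fibers meet $\partial\mathbb B$ compatibly. The formula $\chi(\widetilde C_\lambda)=\chi(C_\lambda)+2\sum\delta_p$ then forces $\sum_p\delta_p(C_\lambda)\le\delta-\imbr$. Since we already have that many real transverse crossings, each contributing $\delta_p\ge1$, these are all the singularities and they are real hyperbolic nodes. By \Cref{rem:zero-level-convention} this is a real morsification.

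The main obstacle I expect is the bookkeeping across contractions. One has to check that arcs coming from circles through several levels still satisfy the hypotheses of \Cref{thm:distinct-tangent-pair-families}, in particular transversality at $c$ and genericity of the translations relative to all previous scales. One also has to confirm that the local counts add up exactly to the blow-up recursion for $\delta$. I would try to organize this as an induction on the number of blow-ups, with the hypothesis "predivide plus exact $\dbl$ plus exact intersection numbers with exceptional components".
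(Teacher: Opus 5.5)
Your proposal is correct and follows the paper's proof essentially step by step. Both use the real resolution and the descent by contractions, with \Cref{thm:distinct-tangent-pair-families} applied at each last common real point and the real exceptional arcs included. Both then use A'Campo translations at ever higher powers of $\lambda$, the lower bound from the blow-up formulas, Remmert's theorem for the equation, and the Euler characteristic comparison for the matching upper bound. There are two small differences. First, you preserve earlier crossings by rigid translations at a finer scale, relying on their $C^1$-stability, rather than by the paper's perturbations $\lambda^{P_\ell}g_\lambda v_\ell$ with $g_\lambda$ vanishing to second order at their preimages. Second, your Euler characteristic identity should read $\chi(\widetilde C_\lambda)=\chi(\text{Milnor fiber})+2\sum_p\delta_p$, since for the singular fiber itself one only has $\chi(\widetilde C_\lambda)=\chi(C_\lambda)+\sum_p(r_p-1)$.
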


	\begin{proof}
		\noindent\emph{Setup.}
		Choose an invariant Milnor ball $\mathbb B^+$ such that
		$\overline{\mathbb B}\subset\mathbb B^+$, and take the sequence of real
		blowups
		\[
			\rho:X_N\longrightarrow\cdots\longrightarrow X_0=\C^2
		\]
		from step~\ref{it:global-algorithm-resolution} of the
		\hyperref[roadmap:global-construction]{Roadmap}.  The real structure 
		lifts to a real structure in $X_j$ after the blow up $X_j \to X_{j-1}$
		(because we are blowing up a real point). Every exceptional
		component created over a real center is invariant by this real 
		structure.  For every real center
		$z$, denote by $E_z$ the exceptional component created by blowing up 
		$z$ (we denote by the same symbol its strict transforms after further 
		blowups). We write
		$E_{z,\R}$ for its real part.  For an orbit 
		$\mathfrak o\in\Omega$ and a real center $z$, we define the number
		\[
			M_{\mathfrak o}(z)
			=
			\sum_{B\in\mathfrak o}\mt(B^{(z)},z),
		\]
		where a branch that does not pass through $z$ contributes zero.  Choose
		t primitive parametrizations of the each of the (smooth) real strict 
		transforms of the real branches of $(C,0)$.

		Throughout the proof, we need to follow holomorphically the preimages of
		the intersections already constructed. In particular, at the end, we 
		need to track the two
		preimages of each node. These is needed to be able to conclude that the 
		images of the disks and cylinders by our parametrizations have an 
		analytic structure. These points may initially form multisections
		rather than separate holomorphic sections. In order to deal with this, 
		we make a finite number of
		changes of parameter as follows.  Start with $t_0=t$.  At stage $j$,
		work with the current parameter $t_{j-1}$.  After shrinking its disk, 
		the relevant multisections are unramified away from the origin.  Let 
		$d_j$ be
		a common multiple of their ramification indices and set
		$t_{j-1}=t_j^{d_j}$.  Consider the family obtained by this base change 
		and
		replace its total space by its normalization.  Rewrite all the
		parametrizations and perturbations from before in terms of $t_j$ by
		substituting $t_{j-1}=t_j^{d_j}$.   The preimages of the multiple 
		points that have been constructed so far, become
		holomorphic sections.  At the end, we do the same for the (two) 
		preimages
		of each new node.

		There are only finitely many stages and hence finitely many base changes,
		say $a$.  At the end, put
		$\lambda=t_a$ and $d=d_1\cdots d_a$, with $d=1$ if no base change is
		required.  Their composition is
		\begin{equation}\label{eq:global-base-change}
			b:\Delta_\lambda\longrightarrow\Delta_t,
			\qquad
			t=b(\lambda)=\lambda^d.
		\end{equation}
		The map \cref{eq:global-base-change} is equivariant with respect to the 
		real 
		structures. In particular, the resulting 
		deformation is real.

		\noindent\emph{Descent and the lower bound.}
		We contract the exceptional components in reverse order and make sure 
		that the
		following condition is preserved.  Just before $E_z$ is 
		contracted, the real
		locus constructed for $\mathfrak o$ meets $E_{z,\R}$ in
		$M_{\mathfrak o}(z)$ distinct transverse real points away from the
		intersection points of $E_{z}$ with the other exceptional divisor.  All 
		multiple points already
		constructed are ordinary and lie away from the exceptional divisor.  
		This condition holds trivially on $X_N$ by the choice of the
		resolution.

		Suppose that $\beta:X_k\to X_{k-1}$ contracts $E_z$ to $c$.  In local
		coordinates, $\beta(u,v)=(u,uv)$ and $E_z=\{u=0\}$.  So the germs that 
		are
		transverse to $E_z$ at distinct real points descend in the blowdown to 
		smooth real germs
		through $c$ with distinct tangent lines. Moreover,  they are also 
		transverse to the
		real exceptional divisors that pass through $c$.  Recall
		in step~\ref{it:global-algorithm-assembly} of the
		\hyperref[roadmap:global-construction]{Roadmap}.

		Consider the packet $\mathcal P_c$ from
		step~\ref{it:global-algorithm-packets}.  If it is empty, no circle is
		introduced at $c$.  Otherwise, the two tangent lines of every pair in
		$\mathcal P_c$ are distinct and nonreal.  Indeed, otherwise a common 
		tangent would
		be real, and the pair would have another common real infinitely near
		point.  Partition $\mathcal P_c$ into its unordered pairs of conjugate
		tangent lines.  For
		$\mathfrak q=\{Q,\overline Q\}\in\mathcal P_c$, set
		$m_{\mathfrak q}=\mt(Q^{(c)},c)$, and take
		\[
			M_c=\operatorname{lcm}
			\{m_{\mathfrak q}:\mathfrak q\in\mathcal P_c\}.
		\]
		Apply \Cref{thm:distinct-tangent-pair-families} with $M=M_c$ and
		$\tau=\lambda^{N_c}$, where $N_c$ is chosen after all earlier scales 
		have already been chosen.
		Among the real arcs in the statement of that theorem, we include the 
		arcs corresponding 
		to the real part all exceptional divisors
		passing through $c$.

		If $\mathcal P_c\ne\varnothing$, taking $N_c$ sufficiently large
		puts the images of the new circle maps in a
		neighborhood of $c$ disjoint from the multiple points that have been 
		already produced. The conclusions of
		\Cref{thm:distinct-tangent-pair-families} hold uniformly for small
		$\lambda$. That is, the new circles and real arcs form a local 
		predivide, all new
		intersections are transverse, and a pair of multiplicity
		$m_{\mathfrak q}$ meets each real arc through $c$ in
		$2m_{\mathfrak q}$ points.

		Before the next contraction, we translate as in
		step~\ref{it:global-algorithm-translations}, with coefficient
		$\lambda^{P_\ell}$ for a new sufficiently large integer $P_\ell$.
		Consider the marked sections obtained in the construction leading to
		\Cref{eq:global-base-change}. 
		What we sketch now is the relative form of A'Campo's
		procedure for these translations
		\cite[\S~2, pp.~6--11]{ACampo1975MonodromyDeploiement}. Here we 
		are actually using the
		parametrized version from in
		\cite[\S~2.1.2, especially Remark~2.1.19]
		{Castellini2015AcampoDeformations} and also
		\cite[\S~2.1.1(1) and \S~2.3]
		{LeviantShustin2018Morsifications}. See those references for more 
		details.
		On each disk or annulus, we perturb the parametrization by
		\[
		\gamma_\lambda(s)
		\longmapsto
		\gamma_\lambda(s)+\lambda^{P_\ell}g_\lambda(s)v_\ell,
		\]
		where $v_\ell$ is a real vector and $g_\lambda$ vanishes to
		order at least two at the preimages of the multiple points that have 
		been already
		constructed.  These points and their tangent directions are therefore
		unchanged.  We make sure to choose $g_\lambda$ nonzero near the 
		remaining exceptional
		divisors, and also choose $v_\ell$ generic enough. This assures that  
		the new intersections
		with that divisor are real, transverse and pairwise distinct.

To finish, we just need to check the number of intersection points required in 
the
		induction.  If an
		exceptional component $E_w$ of $X_{k-1}$ passes through $c$, then
		\begin{equation}\label{eq:exceptional-component-pullback}
			\beta^*E_w=\widetilde E_w+E_z.
		\end{equation}
		For a curve $D$ already constructed, the projection formula
		$(\beta_*D\cdot E_w)_{X_{k-1}}=(D\cdot\beta^*E_w)_{X_k}$, together with
		\cref{eq:exceptional-component-pullback}, gives that  after contracting 
		$E_z$
		we get  $(D\cdot E_z)_{X_k}$ more intersection points with $E_w$.  For 
		the curve
		that corresponds to $\mathfrak o$, this number is $M_{\mathfrak o}(z)$.
		Then, \Cref{thm:distinct-tangent-pair-families} applied with $E_{w,\R}$ 
		included among
		the real arcs, gives $2m_{\mathfrak q}$ intersection points for a
		conjugate pair introduced at $c$.  This  gives
		exactly $M_{\mathfrak o}(z')$ intersection points immediately before
		$E_{z'}$ is contracted, as required for the induction.

		After the last contraction, perform again the translations and 
		perturbations of
		step~\ref{it:global-algorithm-count} described above by the parametrized
		version of A'Campo's perturbation given by Castellini.  A
		small real
		perturbation, replaces an ordinary point of multiplicity $k$ by
		$\binom{k}{2}$ distinct real nodes and preserves the number of all 
		earlier transverse
		intersections.  Let $n$ be the total number of nodes obtained at the 
		end. 
		Let $\beta:X'\to X$ be the blowup of at $z$, and let
		$E=\beta^{-1}(z)$ be the corresponding exceptional divisor.  If
		$H\subset(X,z)$ is a curve germ, denote by $\widetilde H$  its strict 
		transform.  If $G$
		is reduced and $A,B\subset(X,z)$ have no common component, then we have 
		the following standard formulas
		\begin{equation}\label{eq:global-blowup-count}
		\begin{aligned}
			\delta(G,z)
			&=\binom{\mt(G,z)}2+\sum_{p\in E}\delta(\widetilde G,p),\\
			(A\cdot B)_z
			&=\mt(A,z)\mt(B,z)
			+\sum_{p\in E}(\widetilde A\cdot\widetilde B)_p.
		\end{aligned}
		\end{equation}
		These formulas can be found, for example, in
		\cite[Lemma~4.4.2 and Theorem~6.5.9]{Wall2004SingularPoints}.
		For $\mathfrak o\in\Omega$, put
		$C_{\mathfrak o}=\bigcup_{B\in\mathfrak o}B$.  Iterating
		\Cref{eq:global-blowup-count} and using
		\Cref{thm:distinct-tangent-pair-families} shows that an orbit consisting
		of a real branch contributes $\delta(C_{\mathfrak o},0)$ nodes, a
		conjugate pair
		contributes $\delta(C_{\mathfrak o},0)-1$, and two distinct orbits
		$\mathfrak o,\mathfrak p$ contribute
		$(C_{\mathfrak o}\cdot C_{\mathfrak p})_0$.  Therefore
		\Cref{eq:delta-branch-formula} gives
		\[
			n\geq\delta(C,0)-\imbr(C,0).
		\]

		\noindent\emph{Analytic realization.}
		For each orbit of branches $\mathfrak o\in\Omega$, consider the source 
		of its
		quotient trace map: a smooth germ $\mathcal A/G$ for a real branch and 
		the family $\mathcal A$ that smooths the node for a pair of complex conjugate 
		branches (recall in
		\Cref{eq:canonical-source-representative,eq:orbit-quotient-source}).  
		We 
		replace
		its parameter by $s=c_{\mathfrak o}\lambda^{e_{\mathfrak o}}$,
		where $c_{\mathfrak o}>0$ and
		$e_{\mathfrak o}\in\Z_{>0}$ are determined by the choices made in the 
		construction above.
		Denote this family by $\mathcal S_{\mathfrak o}$ and set
		\[
			\mathcal S=\coprod_{\mathfrak o\in\Omega}
			\mathcal S_{\mathfrak o}.
		\]
		 The real structures in
		\Cref{eq:canonical-source-real-structure} give a real structure
		$\sigma_{\mathcal S}$ on $\mathcal S$.  The quotient maps in
		\Cref{eq:quotient-trace-map} and
		the parametrizations constructed during the descent process of this 
		proof, define an
		equivariant holomorphic map
		\begin{equation}\label{eq:global-normalization-map}
			(\widetilde\Psi,\pi):
			\mathcal S\longrightarrow
			\mathbb B^+\times\Delta_\lambda,
		\end{equation}
		where equivariance follows from \Cref{eq:trace-equivariance-down}. The 
		central fiber $\mathcal S_0$ is a disjoint union of disks and nodal 
		singularities. So the
		normalization of $\mathcal S_0$ consists of one disk for each branch of
		$C$, and $\widetilde\Psi_0$ is the chosen primitive parametrization on
		each disk.

		Choose an invariant ball $U$ with
		$\overline{\mathbb B}\subset U$ and
		$\overline U\subset\mathbb B^+$.  Shrink the domains of the
		parametrizations so that
		\[
			(\widetilde\Psi,\pi):
			\widetilde\Psi^{-1}(U)\longrightarrow U\times\Delta_\lambda
		\]
		is proper, and replace $\mathcal S$ by $\widetilde\Psi^{-1}(U)$.  The map
		is finite because it is nonconstant on every component of every fiber.
		Remmert's proper mapping theorem
		\cite{Remmert1956Projektionen} gives an invariant analytic hypersurface
		\[
			Y=(\widetilde\Psi,\pi)(\mathcal S)
			\subset U\times\Delta_\lambda.
		\]
		Since every branch of $C$ occurs once and is parametrized primitively,
		\[
			(\widetilde\Psi_0)_*[\mathcal S_0]=[C].
		\]
		Hence $Y_0=C$ is reduced, and the finite map
		$\mathcal S\to Y$ has degree one over every irreducible component of $Y$.
		Since $\mathcal S$ is normal, this map is the normalization of $Y$.

		Since $U\times\Delta_\lambda$ is Stein and contractible, the 
		hypersurface $Y$ is defined by an $F$ and we can write
		$Y=\{F=0\}$.  Invariance under conjugation allows us to choose a real 
		$F$, that is, an $F$ such that
		\[
			F(\overline x,\overline y,\overline\lambda)
			=
			\overline{F(x,y,\lambda)}.
		\]
		Moreover, up to a unit, we may assume that  $F(\cdot,0)=f$.  In 
		particular,  $F$ 
		is not divisible by $\lambda$.  So $Y$ is flat over
		$\Delta_\lambda$.  After shrinking the parameter disk, every fiber is
		smooth near $\partial\mathbb B$ and transverse to it.

		\noindent\emph{The number of nodes and the real fibers.}
		 For $\lambda\ne0$, the
		fiber $\mathcal S_\lambda$ is smooth (a disjoint union of cylinders and 
		disks), and
		$\widetilde\Psi_\lambda:\mathcal S_\lambda\to Y_\lambda$ is the
		normalization of $Y_\lambda$.  After the base change
		$t=\lambda^d$ in \Cref{eq:global-base-change}, each of the $n$ nodes
		produced in the construction above, determines two holomorphic sections 
		of the source whose images agree for $\lambda>0$

		By construction, the fiber
		$\widetilde\Psi_\lambda^{-1}(\mathbb B) = S_\lambda $ consists of one 
		disk for each
		real branch and one annulus for each conjugate pair.  Since an annulus 
		has Euler characteristic equal to $0$, the Euler
		characteristic of $S_\lambda$ is $\rebr(C,0)$.  A small smoothing of
		$Y_\lambda\cap\mathbb B$ is isotopic to a Milnor fiber of $C$.  The
		Euler characteristic of this Milnor fiber is
		$\rebr(C,0)+2\imbr(C,0)-2\delta(C,0)$.  A simple Euler characteristic 
		computation, yields
		\[
			\sum_{q\in\operatorname{Sing}(Y_\lambda)\cap\mathbb B}
			\delta(Y_\lambda,q)
			=
			\delta(C,0)-\imbr(C,0).
		\]
		See
		\cite[Proposition~6.3.1, Theorems~6.4.1 and~6.5.9, and the subsequent
		remark]{Wall2004SingularPoints}.  Together with the lower bound proved
		above, this gives
		\[
			\delta(C,0)-\imbr(C,0)
			\leq n
			\leq
			\sum_{q\in\operatorname{Sing}(Y_\lambda)\cap\mathbb B}
			\delta(Y_\lambda,q)
			=
			\delta(C,0)-\imbr(C,0).
		\]
		Thus $n=\delta(C,0)-\imbr(C,0)$, and there are no other singular points
		in $\mathbb B$.

		For $\lambda>0$ small enough, by construction, the two preimages of each
		node lie in the locus fixed by the real structure (recall
		\Cref{eq:positive-real-source-circle,eq:quotient-distinguished-real-set}).
		Moreover, also by construction the nodes are actually real and
		hyperbolic.  Also by construction, the unique preimage of every smooth 
		real point of
		$Y_\lambda$ is fixed by the real structure.  Hence
		\[
			\{F(\cdot,\lambda)=0\}\cap\overline{\mathbb B}_\R
		\]
		is exactly the divide constructed during the descent and, as we have 
		already shown, has
		$\delta(C,0)-\imbr(C,0)$ double points.  The criterion of
		\Cref{rem:zero-level-convention} gives that $F$ defines a real
		morsification.
	\end{proof}

	\section{An example with seven branches}
	\label{sec:worked-seven-branch-example}

	This section applies the construction of \Cref{sec:global-predivide} to the
	following germ.

		In real coordinates $(x, y)$, let $
			C=R\cup\bigcup_{i=1}^3(Q_i\cup\overline Q_i),
		$
		where
		$\mathfrak q_i = \{Q_i, \overline Q_i\}$ and $R$ has parametrization
		\[
			R: (x,y)=(t^2,t^3).
		\]
		The other branches are defined by the equations
		\[
			\begin{array}{c@{\qquad}c}
				Q_1:x=iy^2+y^3
				&\overline Q_1:x=-iy^2+y^3,\\
				Q_2:x=iy^2+2y^3
				&\overline Q_2:x=-iy^2+2y^3,\\
				Q_3:x=(2+3i)y^2
				&\overline Q_3:x=(2-3i)y^2.
			\end{array}
		\]
		A real equation for $C$ is
		\[
			f(x,y)
			=
			(y^2-x^3)
			\bigl((x-y^3)^2+y^4\bigr)
			\bigl((x-2y^3)^2+y^4\bigr)
			\bigl((x-2y^2)^2+9y^4\bigr).
		\]

	The branch $R$ has delta invariant equal to one, and its intersection 
	multiplicity
	with each of the other branches is two.  Among the fifteen pairs of nonreal
	branches, thirteen have intersection multiplicity two.  The pairs
	$(Q_1,Q_2)$ and
	$(\overline Q_1, \overline Q_2)$ have intersection multiplicity three.
	Hence the formula \Cref{eq:delta-branch-formula} gives
	\[
		\delta(C,0)=1+6\cdot2+13\cdot2+2\cdot3=45,
		\qquad
		\delta(C,0)-\imbr(C,0)=45-3=42.
	\]

	For step~\ref{it:global-algorithm-resolution} of the
	\hyperref[roadmap:global-construction]{Roadmap}, blow up the origin
	$p_0$.  Write $X_1$ for the blowup and $E_1$ for its
	exceptional component.  The strict transform of $R$ meets $E_1$ at a
	point $p_R$ corresponding to the tangent line $\{y=0\}$.  The other six
	strict transforms meet $E_1$ at the point $c$ corresponding to the
	tangent line $\{x=0\}$.

	On $X_1$, choose local coordinates $(y,z)$ near $c$ in which the
	blowdown map is given by $(y,z)\longmapsto(yz,y)$.
	In these coordinates, we have that the exceptional component and the strict 
	transforms are defined by
	\[
		\begin{array}{c|c|c|c}
			E_1&Q_1,Q_2&\overline Q_1,\overline Q_2&Q_3,\overline Q_3\\ \hline
			y=0
			&z=iy+y^2,\ iy+2y^2
			&z=-iy+y^2,\ -iy+2y^2
			&z=(2+3i)y,\ z=(2-3i)y.
		\end{array}
	\]
	The two pairs of tangent lines at $c$ are
	\[
		\bigl\{\{z=iy\},\{z=-iy\}\bigr\}
		\quad\text{and}\quad
		\bigl\{\{z=(2+3i)y\},\{z=(2-3i)y\}\bigr\}.
	\]
 This gives the partition (see step~\ref{it:global-algorithm-packets}) formed 
 by the sets
	$
		\mathcal P_1=\{\mathfrak q_1,\mathfrak q_2\}$
		and
	$	\mathcal P_2=\{\mathfrak q_3\}$.
	
	Let $E_P$ be the exceptional component of the blowup at $c$.  The
	strict transforms meet $E_P$ at four nonreal points. These are all the and  
	real blow ups needed.
	In the chart of $X_1$ containing $p_R$, choose coordinates
	$(x, w)$ in which the blowdown map to $X_0$ is $(x,w)\longmapsto(x,xw)$.
	The strict transform of $R$ has parametrization
	$(x, w) = (t^2, t)$.  Blow up $p_R$, and let $E_R$ be the exceptional
	component.  Blow up $p_S=E_1\cap E_R$, and denote the new exceptional
	component by $E_S$.  The centers of these sequence of blow ups are
	$p_0, p_R,c$ and $p_S$.
	The dual graph can be seen in
	\Cref{fig:seven-branch-resolution-tree}.

	\begin{figure}[!ht]
		\centering
		\tikzsetnextfilename{seven-branch-resolution-tower}
		\resizebox{0.65\linewidth}{!}{%
			\begin{tikzpicture}[
	x=1cm,
	y=1cm,
	line cap=round,
	line join=round,
	>=stealth
]
\definecolor{orbitone}{HTML}{D55E00}
\definecolor{orbittwo}{HTML}{0072B2}
\definecolor{orbitthree}{HTML}{009E73}
\definecolor{realorbit}{HTML}{7A3E9D}
\path[use as bounding box] (-.25,-3.03) rectangle (13.48,3.03);
\tikzset{
	exceptionalvertex/.style={circle,draw=black,fill=white,line width=1pt,
		minimum size=9pt,inner sep=0pt},
	nonreal/.style={circle,draw=black,fill=white,line width=.85pt,
		minimum size=7.2pt,inner sep=0pt},
	graph edge/.style={black,line width=1pt},
	realbranch/.style={realorbit,line width=1.55pt},
	qone/.style={orbitone,line width=1.35pt},
	qtwo/.style={orbittwo,line width=1.35pt},
	qthree/.style={orbitthree,line width=1.35pt}
}

\coordinate (eR) at (.75,0);
\coordinate (eS) at (2.85,0);
\coordinate (e1) at (4.95,0);
\coordinate (eP) at (7.15,0);
\draw[graph edge] (eR) -- (eS) -- (e1) -- (eP);

\foreach \pos/\weight in {eR/-2,eS/-1,e1/-4,eP/-1}
{
	\node[exceptionalvertex] at (\pos) {};
	\node[font=\small,below=7pt] at (\pos) {\((\weight)\)};
}
\node[font=\small,above=7pt] at (eR) {\(E_R\)};
\node[font=\small,anchor=east] at (2.70,.32) {\(E_S\)};
\node[font=\small,above=7pt] at (e1) {\(E_1\)};
\node[font=\small,above=7pt] at (eP) {\(E_P\)};

\draw[realbranch,->] (eS) -- (2.85,2.05);
\node[font=\small,text=realorbit,anchor=south] at (2.85,2.09)
	{\(R:(t^2,t^3)\)};

\coordinate (uplus)  at (9.25,1.18);
\coordinate (uminus) at (9.25,-1.18);
\coordinate (vplus)  at (11.35,1.18);
\coordinate (vminus) at (11.35,-1.18);
\foreach \p in {uplus,uminus,vplus,vminus}
	\draw[graph edge] (eP) -- (\p);

\node[nonreal,draw=orbittwo,minimum size=10pt] at (uplus) {};
\node[nonreal,draw=orbitone,minimum size=5pt] at (uplus) {};
\node[nonreal,draw=orbittwo,minimum size=10pt] at (uminus) {};
\node[nonreal,draw=orbitone,minimum size=5pt] at (uminus) {};
\node[nonreal,draw=orbitthree] at (vplus) {};
\node[nonreal,draw=orbitthree] at (vminus) {};

\draw[qone,->] (uplus) -- (8.68,2.35);
\draw[qtwo,->] (uplus) -- (9.82,2.35);
\draw[qone,->] (uminus) -- (8.68,-2.35);
\draw[qtwo,->] (uminus) -- (9.82,-2.35);
\draw[qthree,->] (vplus) -- (12.32,2.35);
\draw[qthree,->] (vminus) -- (12.32,-2.35);

\node[font=\small,text=orbitone,anchor=south east] at (8.68,2.39)
	{\(Q_1\)};
\node[font=\small,text=orbittwo,anchor=south west] at (9.82,2.39)
	{\(Q_2\)};
\node[font=\small,text=orbitone,anchor=north east] at (8.68,-2.39)
	{\(\overline Q_1\)};
\node[font=\small,text=orbittwo,anchor=north west] at (9.82,-2.39)
	{\(\overline Q_2\)};
\node[font=\small,text=orbitthree,anchor=south west] at (12.32,2.39)
	{\(Q_3\)};
\node[font=\small,text=orbitthree,anchor=north west] at (12.32,-2.39)
	{\(\overline Q_3\)};

\end{tikzpicture}%
		}
		\caption{The dual graph of the resolution.  The purple arrow represents
		the strict transform of $R$.  The other six arrows at $E_P$ represent
		the strict transforms of the nonreal branches, which meet $E_P$ at
		four points.}
		\label{fig:seven-branch-resolution-tree}
	\end{figure}
	\FloatBarrier

	 As in
step~\ref{it:global-algorithm-initialization}, choose a primitive
parametrization of the strict transform of $R$ from a smooth disk.
Let $J_R$ denote the image of a compact interval conatained in the locus 
fixed by the 
real structure of
that disk.  Figure (a) shows $J_R$ near the exceptional components over
$p_R$, immediately before $E_S$ is contracted.  Its intersection with
$E_S$ is far from $E_S\cap E_R$ and $E_S\cap E_1$.
Now we contract $E_S$. Then, the
components $E_R$ and $E_P$ in either order, and finally
$E_1$.
A small translation, as in
step~\ref{it:global-algorithm-translations}, makes the intersections of
$J_R$ with $E_1$ and $E_R$ distinct and transverse (see figure~(b)).
	\definecolor{orbitone}{HTML}{D55E00}
\definecolor{orbittwo}{HTML}{0072B2}
\definecolor{orbitthree}{HTML}{009E73}
\definecolor{realorbit}{HTML}{7A3E9D}
\definecolor{nodecharcoal}{HTML}{252525}

\tikzset{
	descent exceptional/.style={black!68,line width=1.05pt},
	descent realbranch/.style={realorbit,solid,line width=.95pt},
	descent rawbranch/.style={realorbit!42,solid,line width=.65pt},
	descent qone/.style={orbitone,solid,line width=.82pt},
	descent qtwo/.style={orbittwo,solid,line width=.82pt},
	descent qthree/.style={orbitthree,solid,line width=.82pt},
	descent translation/.style={->,realorbit!82,line width=.72pt},
	descent corner/.style={circle,draw=black!72,fill=white,line width=.6pt,
		minimum size=3.6pt,inner sep=0pt},
	descent limiting/.style={circle,draw=black!72,fill=white,line width=.6pt,
		minimum size=3.2pt,inner sep=0pt},
	descent purplehit/.style={circle,draw=white,fill=realorbit,line width=.5pt,
		minimum size=3.5pt,inner sep=0pt},
	descent orangehit/.style={circle,draw=white,fill=orbitone,line width=.5pt,
		minimum size=3.5pt,inner sep=0pt},
	descent bluehit/.style={circle,draw=white,fill=orbittwo,line width=.5pt,
		minimum size=3.5pt,inner sep=0pt},
	descent greenhit/.style={circle,draw=white,fill=orbitthree,line width=.5pt,
		minimum size=3.5pt,inner sep=0pt},
	descent node/.style={circle,draw=white,fill=nodecharcoal,line width=.35pt,
		minimum size=2.6pt,inner sep=0pt}
}


\newcommand{\SevenBranchDescentA}{%
	\begin{tikzpicture}[x=1cm,y=1cm,line cap=round,line join=round,>=stealth]
		\path[use as bounding box] (-.10,-.10) rectangle (6.30,3.05);
		\def\yy{1.48}

		\draw[descent exceptional] (.45,\yy) -- (5.95,\yy);
		\draw[descent exceptional] (1.02,.32) -- (1.02,2.64);
		\draw[descent exceptional] (5.38,.32) -- (5.38,2.64);
		\draw[descent realbranch]
			(3.02,2.63) .. controls (3.10,2.20) and (3.28,1.76) ..
			(3.34,\yy) .. controls (3.40,1.12) and (3.60,.65) ..
			(3.76,.33);

		\node[descent corner] at (1.02,\yy) {};
		\node[descent purplehit] at (3.34,\yy) {};
		\node[descent corner] at (5.38,\yy) {};

		\node[font=\scriptsize,anchor=south east] at (.95,2.58) {\(E_1\)};
		\node[font=\scriptsize,anchor=south west] at (5.46,2.58) {\(E_R\)};
		\node[font=\scriptsize,anchor=south] at (4.40,1.58) {\(E_S\)};
		\node[font=\scriptsize,text=realorbit,anchor=west] at (3.82,.55)
			{\(J_R\)};
	\end{tikzpicture}%
}

\newcommand{\SevenBranchDescentB}{%
	\begin{tikzpicture}[x=1cm,y=1cm,line cap=round,line join=round,>=stealth]
		\path[use as bounding box] (-.10,-.10) rectangle (6.30,3.05);

		\draw[descent exceptional] (1.77,.27) -- (1.77,2.66);
		\draw[descent exceptional] (.42,1.20) -- (6.00,1.20);
		\node[descent corner] at (1.77,1.20) {};

		\draw[descent rawbranch] (.82,.30) -- (3.18,2.66);
		\draw[descent realbranch] (.55,.58) -- (2.82,2.85);
		\node[descent purplehit] at (1.17,1.20) {};
		\node[descent purplehit] at (1.77,1.80) {};

		\draw[descent translation] (2.55,1.90) -- (2.55,2.42);
		\node[font=\scriptsize,text=realorbit,anchor=west] at (2.65,2.16)
			{\(\lambda^{P_S}\)};

		\node[font=\scriptsize,anchor=south east] at (1.70,2.60) {\(E_1\)};
		\node[font=\scriptsize,anchor=south] at (5.05,1.30) {\(E_R\)};
	\end{tikzpicture}%
}

\newcommand{\SevenBranchDescentC}{%
	\begin{tikzpicture}[x=1cm,y=1cm,line cap=round,line join=round,>=stealth]
		\path[use as bounding box] (1.78,-.10) rectangle (4.48,3.05);
		\def\yy{1.48}

		\draw[descent exceptional] (2.55,.22) -- (2.55,2.73);
		\draw[descent realbranch,samples=121,smooth,variable=\vv,
			domain=.35:2.61]
			plot ({2.28+1.04*(\vv-\yy)*(\vv-\yy)},\vv);
		\pgfmathsetmacro{\cuspdeltaC}{sqrt(.27/1.04)}
		\node[descent purplehit] at (2.55,{\yy-\cuspdeltaC}) {};
		\node[descent purplehit] at (2.55,{\yy+\cuspdeltaC}) {};
		\node[descent limiting] at (2.55,\yy) {};

		\node[font=\scriptsize,anchor=south east] at (2.48,2.67) {\(E_1\)};
		\node[font=\scriptsize,anchor=east] at (2.32,\yy) {\(p_R\)};
		\node[font=\scriptsize,text=realorbit,anchor=west] at (3.82,\yy)
			{\(J_R\)};
	\end{tikzpicture}%
}

\newcommand{\SevenBranchDescentD}{%
	\begin{tikzpicture}[x=1cm,y=1cm,line cap=round,line join=round,>=stealth]
		\path[use as bounding box] (2.34,-.10) rectangle (4.30,3.05);
		\def\pR{.83}
		\def\cc{2.18}

		\draw[descent exceptional] (3.00,.13) -- (3.00,2.85);
		\node[font=\scriptsize,anchor=north east] at (2.94,2.98) {\(E_1\)};

		\draw[descent realbranch,samples=101,smooth,variable=\vv,
			domain=.28:1.38]
			plot ({2.73+1.58*(\vv-\pR)*(\vv-\pR)},\vv);
		\pgfmathsetmacro{\cuspdeltaD}{sqrt(.27/1.58)}
		\node[descent purplehit] at (3.00,{\pR-\cuspdeltaD}) {};
		\node[descent purplehit] at (3.00,{\pR+\cuspdeltaD}) {};
		\node[descent limiting] at (3.00,\pR) {};

		\begin{scope}[shift={(3.00,\cc)}]
			\fill[white] (0,0) circle[radius=.22];
			\fill[orbitone] (0,0) -- (90:.18) arc (90:210:.18) -- cycle;
			\fill[orbittwo] (0,0) -- (210:.18) arc (210:330:.18) -- cycle;
			\fill[orbitthree] (0,0) -- (330:.18) arc (330:450:.18) -- cycle;
			\draw[black!55,line width=.25pt] (0,0) circle[radius=.18];
		\end{scope}

		\node[font=\scriptsize,anchor=west] at (3.82,\pR) {\(p_R\)};
		\node[font=\scriptsize,anchor=west] at (3.37,\cc) {\(c\)};
	\end{tikzpicture}%
}

\newcommand{\SevenBranchDescentE}{%
	\begin{tikzpicture}[x=1cm,y=1cm,line cap=round,line join=round,>=stealth]
		\path[use as bounding box] (2.15,-.10) rectangle (4.30,3.05);
		\def\packetoffset{.205}
		\def\pR{.66}
		\def\cc{2.12}
		\def\circlescale{.55}
		\def\mixedpacketnodes{%
			.729007/.622006,.553523/.741050,-.918680/.103388,
			-.889163/-.405888,.889163/.405888,-.729007/-.622006,
			-.553523/-.741050,.918680/-.103388}

		\draw[descent exceptional] (3.00,.05) -- (3.00,2.90);
		\node[font=\scriptsize,anchor=north west] at (3.08,2.98) {\(E_1\)};

		\draw[descent realbranch,samples=101,smooth,variable=\vv,
			domain=.17:1.15]
			plot ({2.73+1.58*(\vv-\pR)*(\vv-\pR)},\vv);
		\pgfmathsetmacro{\cuspdeltaE}{sqrt(.27/1.58)}
		\node[descent purplehit] at (3.00,{\pR-\cuspdeltaE}) {};
		\node[descent purplehit] at (3.00,{\pR+\cuspdeltaE}) {};
		\node[descent limiting] at (3.00,\pR) {};

		\draw[descent qone]
			plot[domain=0:360,samples=281,smooth cycle,variable=\a]
			({3.00+\circlescale*((1+.08*cos(3*\a))*cos(\a)-\packetoffset)},
			 {\cc+\circlescale*(1+.08*cos(3*\a))*sin(\a)});
		\draw[descent qtwo]
			plot[domain=0:360,samples=281,smooth cycle,variable=\a]
			({3.00+\circlescale*((1-.08*cos(3*\a))*cos(\a)-\packetoffset)},
			 {\cc+\circlescale*(1-.08*cos(3*\a))*sin(\a)});
		\draw[descent qthree]
			plot[domain=0:360,samples=281,smooth cycle,variable=\a]
			({3.00+\circlescale*(.9775*cos(\a)*cos(23.4913)
				-.805*sin(\a)*sin(23.4913)-\packetoffset)},
			 {\cc+\circlescale*(.9775*cos(\a)*sin(23.4913)
				+.805*sin(\a)*cos(23.4913))});

		\foreach \a in {30,90,150,210,270,330}{
			\node[descent node] at
				({3.00+\circlescale*(cos(\a)-\packetoffset)},
				 {\cc+\circlescale*sin(\a)}) {};
		}
		\foreach \xx/\eta in \mixedpacketnodes{
			\node[descent node] at
				({3.00+\circlescale*(\xx-\packetoffset)},
				 {\cc+\circlescale*\eta}) {};
		}

		\foreach \eta in {.929145939,-.929145939}{
			\node[descent orangehit] at
				(3.00,{\cc+\circlescale*\eta}) {};
		}
		\foreach \eta in {1.024366505,-1.024366505}{
			\node[descent bluehit] at
				(3.00,{\cc+\circlescale*\eta}) {};
		}
		\foreach \eta in {.832440513,-.781608539}{
			\node[descent greenhit] at
				(3.00,{\cc+\circlescale*\eta}) {};
		}
		\node[descent limiting] at (3.00,\cc) {};

		\node[font=\scriptsize,anchor=west] at (3.82,\pR) {\(p_R\)};
		\node[font=\scriptsize,anchor=west] at (3.82,\cc) {\(c\)};
	\end{tikzpicture}%
}

	\begingroup
	\edef\savedparindent{\the\parindent}
	\Needspace{11\baselineskip}
	\begin{figure}[ht!]
		\centering
		\begin{minipage}[t]{.38\linewidth}
			\makebox[\linewidth][r]{\small\textup{(a)}}
			\par\vspace{.4mm}
			\tikzsetnextfilename{seven-branch-descent-a}
			\resizebox{\linewidth}{!}{\SevenBranchDescentA}
		\end{minipage}
		\hfill
		\begin{minipage}[t]{.38\linewidth}
			\makebox[\linewidth][r]{\small\textup{(b)}}
			\par\vspace{.4mm}
			\tikzsetnextfilename{seven-branch-descent-b}
			\resizebox{\linewidth}{!}{\SevenBranchDescentB}
		\end{minipage}
	\end{figure}

	\Needspace{11\baselineskip}
	\noindent
	\begin{minipage}[t]{.74\linewidth}
		\vspace{0pt}
		\setlength{\parindent}{\savedparindent}
			After contracting $E_R$, the image of $J_R$ has intersection
			multiplicity $\mt(R,0)=2$ with $E_1$.  A small translation separates
			this contact into the two transverse real intersections shown in
			figure~(c).
		Contracting $E_P$ leaves this chart unchanged.	Figure (d) shows $J_R$ 
		and the  point $c$ after $E_R$ and
		$E_P$ have been contracted.  The arc $J_R$ is away from $c$ and the
			other six strict transforms pass through $c$.  They are smooth 
			there, so
			$m_{\mathfrak q_i}=1$ for $i = 1, 2, 3$.  Apply
			\Cref{thm:distinct-tangent-pair-families} to
			$\mathcal P_c=\mathcal P_1\sqcup\mathcal P_2$, with $M=1$.
	\end{minipage}
	\hfill
	\begin{minipage}[t]{.19\linewidth}
		\vspace{0pt}
		\makebox[\linewidth][r]{\small\textup{(c)}}
		\par\vspace{.4mm}
		\tikzsetnextfilename{seven-branch-descent-c}
		\resizebox{\linewidth}{!}{\SevenBranchDescentC}
	\end{minipage}
	\par\smallskip

	\Needspace{13\baselineskip}
	\noindent
	\begin{minipage}[t]{.15\linewidth}
		\vspace{0pt}
		\makebox[\linewidth][r]{\small\textup{(d)}}
		\par\vspace{.4mm}
		\tikzsetnextfilename{seven-branch-descent-d}
		\resizebox{\linewidth}{!}{\SevenBranchDescentD}
	\end{minipage}
	\hfill
	\begin{minipage}[t]{.79\linewidth}
		\vspace{0pt}
		\setlength{\parindent}{\savedparindent}
			Choose $\kappa_1, \kappa_2 > 0$ as in
			\Cref{thm:distinct-tangent-pair-families}. And let $N$ be
			large enough compared with the power used in the previous 
			translation.
			For $\lambda>0$ small enough, set
			$
				r_i(\lambda)=\kappa_j\lambda^N
				\quad\text{if }\mathfrak q_i\in\mathcal P_j.
			$
			Let $\Gamma\subset E_{1,\R}$ be a small interval around $c$,
			regarded as one of the real arcs used in
			\Cref{thm:distinct-tangent-pair-families}.  For each $i$, let
			$
				A_{i,\lambda}
				=
				\operatorname{Im}\!
				\left(
					\alpha^{\mathrm{cp}}_{\boldsymbol\gamma_{\mathfrak q_i},
					r_i(\lambda)}
				\right)
			$
			be the image of the circle map associated with
			$\mathfrak q_i$.  With
			$\tau=\lambda^N$, that same lemma gives
		\[
		\begin{aligned}
			\dbl(A_{i,\lambda})&=0,
			\qquad i=1,2,3,\\
			\dbl(A_{1,\lambda},A_{2,\lambda})&=2+2+1+1=6,\\
			\dbl(A_{1,\lambda},A_{3,\lambda})
			=\dbl(A_{2,\lambda},A_{3,\lambda})&=4,\\
			\dbl(A_{i,\lambda},\Gamma)&=2,
			\qquad i=1,2,3.
		\end{aligned}
		\]
	\end{minipage}
	\par\smallskip

	So the images of the three circles have
	$6+4+4=14$ intersection points, and they meet $\Gamma$ in six points.

	\Needspace{14\baselineskip}
	\noindent
	\begin{minipage}[t]{.74\linewidth}
		\vspace{0pt}
		\setlength{\parindent}{\savedparindent}
			Next we apply the translation from
			step~\ref{it:global-algorithm-translations} using a higher power of
			$\lambda$.  This translation moves the multiple points away from 
			$E_1$ and ensures $6$ intersection points of the circles with the 
			divisor.

		Figure (e) shows the situation before $E_1$ is contracted.  Orange,
		blue, and green represent
		$A_{1, \lambda}, A_{2, \lambda}$ and $A_{3, \lambda}$ respectively.  
		The six intersection points of these three divides with
		with $\Gamma$, together with the two points of $J_R\cap E_1$, give us
			eight distinct real points on $E_1$.  The other $14$ intersection
			are not in $E_1$.

		Let $O$ be the image of $E_1$ by its blowdown.  The eight curve
		germs meeting $E_1$ at these points descend to smooth real germs through
		$O$ with distinct tangent lines.  So they form an ordinary real point of
		multiplicity eight (see
		\Cref{fig:seven-branch-final-contraction}).
	\end{minipage}
	\hfill
	\begin{minipage}[t]{.20\linewidth}
		\vspace{0pt}
		\makebox[\linewidth][r]{\small\textup{(e)}}
		\par\vspace{.4mm}
		\tikzsetnextfilename{seven-branch-descent-e}
		\resizebox{\linewidth}{!}{\SevenBranchDescentE}
	\end{minipage}
	\par\smallskip
	\endgroup

	\begin{figure}[!ht]
		\centering
		\tikzsetnextfilename{seven-branch-final-predivide}
		\resizebox{0.72\linewidth}{!}{%
			\input{seven_branch_final_predivide.tikz}%
		}
		\caption{Contraction of $E_1$.  The eight intersections with $E_1$
		map to $O$ and the other $14$ intersection points remain away from
		$O$.}
		\label{fig:seven-branch-final-contraction}
	\end{figure}

	The final perturbation (as in step \ref{it:global-algorithm-count}) 
	produces fourteen ordinary double points away from $O$ and $\binom82=28$ 
	nodes near $O$.  Thus there are
	\[
		14+\binom82=42=\delta(C,0)-\imbr(C,0)
	\]
	nodes.  See
	\Cref{fig:seven-branch-descent-splitting} for the perturbation near $0$.

	\begin{figure}[!ht]
		\centering
		\tikzsetnextfilename{seven-branch-descent-and-splitting}
		\resizebox{0.6\linewidth}{!}{%
			\begin{tikzpicture}[
	x=1cm,
	y=1cm,
	line cap=round,
	line join=round,
	>=stealth
]
\definecolor{orbitone}{HTML}{D55E00}
\definecolor{orbittwo}{HTML}{0072B2}
\definecolor{orbitthree}{HTML}{009E73}
\definecolor{realorbit}{HTML}{7A3E9D}
\definecolor{nodecharcoal}{HTML}{252525}
\path[use as bounding box] (6.10,-1.82) -- (15.15,2.22);
\tikzset{
	ordinary/.style={circle,fill=nodecharcoal,inner sep=1.25pt}
}

\begin{scope}[shift={(8.10,.20)}]
	\begin{scope}[scale=1.24]
	\foreach \m/\col in {
		-1.15/realorbit,-.85/realorbit,
		.45/orbitone,.65/orbittwo,.85/orbitthree,
		1.15/orbitthree,1.35/orbittwo,1.55/orbitone}{
		\pgfmathsetmacro{\dx}{1.46/sqrt(1+\m*\m)}
		\draw[\col,line width=.85pt]
			({-\dx},{-\m*\dx}) -- (\dx,{\m*\dx});
	}
	\node[ordinary] at (0,0) {};
	\end{scope}
\end{scope}

\draw[->,gray!72,line width=.65pt] (9.98,.20) -- (10.64,.20);

\begin{scope}[shift={(13.10,.20)}]
	\begin{scope}[scale=1.24]
	\clip (0,0) circle (1.53);
	\foreach \m/\b/\col in {
		-1.15/1.037608/realorbit,
		-.85/.743649/realorbit,
		.45/-.007868/orbitone,
		.65/-.043425/orbittwo,
		.85/-.045394/orbitthree,
		1.15/.075701/orbitthree,
		1.35/.260246/orbittwo,
		1.55/.421274/orbitone}{
		\draw[\col,line width=.85pt]
			(-1.75,{-1.75*\m+\b}) -- (1.75,{1.75*\m+\b});
	}
	\foreach \mi/\bi [count=\ii] in {
		-1.15/1.037608,-.85/.743649,.45/-.007868,.65/-.043425,
		.85/-.045394,1.15/.075701,1.35/.260246,1.55/.421274}{
		\foreach \mj/\bj [count=\jj] in {
			-1.15/1.037608,-.85/.743649,.45/-.007868,.65/-.043425,
			.85/-.045394,1.15/.075701,1.35/.260246,1.55/.421274}{
			\ifnum\ii<\jj
				\pgfmathsetmacro{\xx}{(\bj-\bi)/(\mi-\mj)}
				\pgfmathsetmacro{\yy}{\mi*\xx+\bi}
				\fill[white] (\xx,\yy) circle (1.00pt);
				\fill[nodecharcoal] (\xx,\yy) circle (.55pt);
			\fi
		}
	}
	\end{scope}
\end{scope}
\end{tikzpicture}%
		}
		\caption{The perturbation of the ordinary point of multiplicity eight 
		at $O$
		produces $\binom82=28$ nodes.}
		\label{fig:seven-branch-descent-splitting}
	\end{figure}
	\FloatBarrier

	After making the finite base changes in
	\Cref{eq:global-base-change}, and again writing $\lambda$ for the final
	parameter, the analytic realization in the proof of
	\Cref{thm:global-predivide} gives the existence of an equation
	\[
		F_{\mathrm{an}}\in\R\{x,y,\lambda\},
		\qquad
		F_{\mathrm{an}}(x,y,0)=f(x,y)
	\]
	defining the real deformation that we have just constructed.
	Here we explain how to obtain this equation in practice.  Put $s=\lambda^2$.
	For the real branch, set $T=u+v$ and $
		\xi=T^2-3s$ with 
		$\zeta=T\xi$. 
	For each conjugate pair we have $uv=s$ and we set
	\[
		\zeta=u+v,
		\qquad
		D=u-v,
		\qquad
		D^2=\zeta^2-4s.
	\]
	For $\mathfrak q_2$, translate the coordinate $z$ by $\lambda^3$.
	After the blowdown $(y,z)\mapsto(yz,y)$, the first coordinates for
	$\mathfrak q_1,\mathfrak q_2,\mathfrak q_3$ are respectively
	\[
	\begin{aligned}
		\xi&=\zeta(\zeta^2-2s)+i\zeta D,\\
		\xi&=\zeta(2\zeta^2-4s+\lambda^3)+i\zeta D,\\
		\xi&=2\zeta^2+3i\zeta D.
	\end{aligned}
	\]
	Eliminating $T$ and $D$ from these formulas gives the following polynomials
	\[
	\begin{aligned}
		h_{0,\lambda}(\xi,\zeta)
		&=\zeta^2-\xi^2(\xi+3s),\\
		h_{1,\lambda}(\xi,\zeta)
		&=\bigl[\xi-\zeta(\zeta^2-2s)\bigr]^2
		  +\zeta^2(\zeta^2-4s),\\
		h_{2,\lambda}(\xi,\zeta)
		&=\bigl[\xi-\zeta(2\zeta^2-4s+\lambda^3)\bigr]^2
		  +\zeta^2(\zeta^2-4s),\\
		h_{3,\lambda}(\xi,\zeta)
		&=(\xi-2\zeta^2)^2+9\zeta^2(\zeta^2-4s).
	\end{aligned}
	\]
	Set $\eta=\lambda^{20}$, and define
	\[
		Z_{k,\lambda}
		=
		\bigl\{(x,y):
		h_{k,\lambda}(x-k\eta,y-k^2\eta)=0\bigr\},
		\quad k=0,1,2,3.
	\]
	The polynomial
	\begin{equation}
		\label{eq:seven-branch-explicit-morsification}
		F_{\mathrm{poly}}(x,y,\lambda)
		=
		\prod_{k=0}^3
		h_{k,\lambda}(x-k\eta,y-k^2\eta)
	\end{equation}
	defines the zero set of the four polynomials above.
	Setting $\lambda=0$ gives
	$F_{\mathrm{poly}}(x,y,0)=f(x,y)$.
	Hence $\lambda$ is a nonzero divisor in
	$\C\{x, y, \lambda\}/(F_{\mathrm{poly}})$. And so
	$\{F_{\mathrm{poly}}=0\}$ is flat over $\Delta_\lambda$.  

	\bibliographystyle{alpha}
	\bibliography{references}
\end{document}